\numberwithin{equation}{section}
\newcommand{\Div}{\divergence}
\newcommand{\R}{\mathbb R}
\newcommand{\N}{\mathbb N}
\newcommand{\E}{\mathbb E}
\newcommand{\p}{\mathbb P}
\newcommand{\F}{\mathfrak F}
\newcommand{\A}{\Delta}
\newcommand{\dd}{\mathrm d}
\newcommand{\dx}{\, \mathrm{d}x}
\newcommand{\ds}{\, \mathrm{d}\sigma}
\newcommand{\dt}{\, \mathrm{d}t}
\newcommand{\dxt}{\,\mathrm{d}x\, \mathrm{d}t}
\newcommand{\dxs}{\,\mathrm{d}x\, \mathrm{d}\sigma}
\newcommand{\dif}{\mathrm{d}}
\newcommand{\mf}{\mathfrak{F}}
\newcommand{\prst}{\mathbb{P}}
\newcommand{\mn}{\mathbb{N}}
\newcommand{\mt}{\mathbb{T}^3}
\newcommand{\tor}{\mathbb{T}^3}
\DeclareMathOperator{\diver}{div}
\begin{document}

\title[3D stochastic Navier--Stokes equations]
{Space-time approximation of local strong solutions to the 3D stochastic Navier--Stokes equations}

\author{Dominic Breit}
\address{Institute of Mathematics, TU Clausthal, Erzstra\ss e 1, 38678 Clausthal-Zellerfeld, Germany \\
and Department of Mathematics, Heriot-Watt University, Riccarton Edinburgh EH14 4AS, UK}
\email{dominic.breit@tu-clausthal.de}

\author{Alan Dodgson}
\address{Department of Mathematics, Heriot-Watt University, Riccarton Edinburgh EH14 4AS, UK}
\email{ad335@hw.ac.uk}


%
%

\begin{abstract}
We consider the 3D stochastic Navier--Stokes equation on the torus. Our main result concerns the temporal and spatio-temporal discretisation of a local strong pathwise solution. We prove optimal convergence rates for the energy error with respect to convergence in probability, that is convergence of order 1 in space and of order (up to) 1/2 in time.
The result holds up to the possible blow-up of the (time-discrete) solution. Our approach is based on discrete stopping times for the (time-discrete) solution. 
\end{abstract}

\keywords{Stochastic Navier--Stokes equations \and local strong solutions \and error analysis \and space-time discretisation  \and convergence rates}
\subjclass[2010]{65M15, 65C30, 60H15, 60H35}

\date{\today}

\maketitle

%
%
%
%
%
%
%
%
%
%

\section{Introduction}

We are concerned with the numerical approximation of the 3D stochastic Navier--Stokes equations which read as
\begin{align}\label{eq:SNS}
\left\{\begin{array}{rc}
\dd\bfu=\mu\Delta\bfu\dt-(\nabla\bfu)\bfu\dt-\nabla p\dt+\Phi(\bfu)\dd W
& \mbox{in $\mathcal Q_T$,}\\
\Div \bfu=0\qquad\qquad\qquad\qquad\qquad\,\,\,\,& \mbox{in $\mathcal Q_T$,}\\
\bfu(0)=\bfu_0\,\qquad\qquad\qquad\qquad\qquad&\mbox{ \,in $\mt$,}\end{array}\right.
\end{align}
$\p$-a.s. in $\mathcal Q_T:=(0,T)\times\mt$, where $T>0$, $\mu>0$ is the viscosity and $\bfu_0$ is a given initial datum. The momentum equation is driven by a cylindrical Wiener process $W$ and the diffusion coefficient $\Phi(\bfu)$ takes values in the space of Hilbert-Schmidt operators; see Section \ref{sec:prob} for details.

The analysis of \eqref{eq:SNS} has a long history starting with the paper \cite{BeTe}, where a semi-deterministic approach is applied.
 A further milestone is the existence of martingale solutions to \eqref{eq:SNS} shown in \cite{FlGa}. These solutions are weak in the analytical sense (derivatives exist only in the sense of distributions and singularities may occur)
 and weak in the probabilistic sense (the probability space is not a priori given but is an integral part of the solution). By now most results from the deterministic case have found their stochastic counterpart, 
 an overview is given in \cite{Fl} and \cite{Ro}. Since the well-posedness of \eqref{eq:SNS} (and its deterministic version) is a big open problem, the existence of weak solutions is the best one can hope unless one is satisfied with a local-in-time result. There exists various results concerning local strong pathwise solutions to \eqref{eq:SNS}, cf. \cite{BeFr,BrzP,GlZi,Kim,Mi}. These solutions are defined on a given stochastic basis and are regular with respect to the spatial variable but only exist up to a stopping time (a precise formulation is given in Definition \ref{def:strsol}). The only information about the latter we have is that it is $\p$-a.s. strictly positive.
 It is yet unclear if the presence of noise changes the well-posedness for \eqref{eq:SNS}. On the one hand, there are results based on the method of convex integration showing that stochastic perturbations do not render the ill-posedness of problems in fluid mechanics, cf. \cite{BFH,HZZ1,HZZ2}.
 On the other hand, it was recently proved in \cite{FL} that a carefully chosen transport noise in \eqref{eq:SNS} can delay the blow-up of the vorticity.

There is less known about the numerical approximation of \eqref{eq:SNS}, though there was recently some progress on the 2D case.
In particular, it is shown in \cite{BrDo} and \cite{CP}
that for any $\xi>0$,
\begin{align}\label{eq:perror}
&\mathbb P\bigg[\max_{1\leq m\leq M}\|\bfu(t_m)-\bfu_{h,m}\|_{L^2_x}^2+\sum_{m=1}^M \tau\|\nabla\bfu(t_m)-\nabla\bfu_{h,m}\|_{L^2_x}^2>\xi\,\big(h^{2\beta}+\tau^{2\alpha}\big)\bigg]\rightarrow0
\end{align}
as $h,\tau\rightarrow0$ (where $\alpha<\frac{1}{2}$ and $\beta<1$ are arbitrary); see also \cite{BeMi1,BeMi2} for related results. Here $\bfu$ is the solution to \eqref{eq:SNS} and $\bfu_{h,m}$ the approximation of $\bfu(t_m)$ with discretisation parameters $\tau=T/M$ (in time) and $h$ (in space). The relation \eqref{eq:perror} tells us that the convergence with respect to convergence in probability is of order (almost) 1/2 in time and 1 in space. A similar convergence result for the pathwise error is not expect due to non-Lipschitz nonlinearity in \eqref{eq:SNS}.
A result such as \eqref{eq:perror} heavily relies on the spatial regularity of the solution and can consequently not be expected for the 3D problem we are interested here. The only reachable
outcome is the approximation of a martingale solution, which converges in law to the solution up to taking a subsequence.
A corresponding result has been proved in \cite{BCP}. 

In this paper we take a different perspective and study the approximation of local strong pathwise solutions. We prove a counterpart of \eqref{eq:perror} which holds locally in time, that is, up to a discrete stopping time which replaces $M$ in maximum and sum.
We obtain a result for the temporal discretisation in Theorem \ref{thm:maint} as well as for the error between time- and space-time discretisation; see Theorem \ref{thm:mainx}.
Both combined give the convergence rate for this spatio-temporal discretisation; see Theorem \ref{thm:maintx}. The analysis of the temporal error in Section \ref{sec:terror} is reminiscent of the estimates for the space-time error for the 2D Dirichlet problem from \cite{BrPr}. They rely on a discrete version of the stopping time for the continuous solution and estimates for the latter. The analysis of the
error between time- and space-time discretisation in Section \ref{sec:txerror} is more delicate and has not been performed in \cite{BrPr}. Building up on ideas from \cite{BrPr2} we introduce a discrete stopping time for the time-discrete solution which announces the blow-up, similar to the stopping time for the continuous solution. In Lemma \ref{lem:sgeqtau},
we prove that for $\tau\rightarrow0$ we can perform a given number of time steps with a high probability before the blow-up. These replaces the strict positivity of the stopping time in the continuous set-up and justifies the subsequent analysis. 

We believe that our approach will be applicable to a wide range of stochastic PDEs which are well-posed locally in time, in particular stochastic Euler equations and stochastic compressible Navier--Stokes equations. Surprisingly, results for the numerical approximation of local solutions to stochastic PDEs do not seem to exist so far.

\section{Mathematical framework}
\label{sec:framework}

\subsection{Probability setup}\label{sec:prob}

Let $(\Omega,\F,(\F_t)_{t\geq0},\prst)$ be a stochastic basis with a complete, right-continuous filtration. The process $W$ is a cylindrical $\mathfrak U$-valued Wiener process, that is, $W(t)=\sum_{j\geq1}\beta_j(t) e_j$ with $(\beta_j)_{j\geq1}$ being mutually independent real-valued standard Wiener processes relative to $(\F_t)_{t\geq0}$, and $(e_j)_{j\geq1}$ a complete orthonormal system in a separable Hilbert space $\mathfrak{U}$.
Let us now give the precise definition of the diffusion coefficient $\varPhi(\bfu)$ taking values in the set of Hilbert-Schmidt operators $L_2(\mathfrak U;\mathbb H)$, where
$\mathbb H$ can take the role of various Hilbert spaces.
We assume that  $\Phi(\bfu)\in L_2(\mathfrak U;L^2(\mt))$ for $\bfu\in L^2(\mt)$, and
$\Phi(\bfu)\in L_2(\mathfrak U;W^{1,2}(\mt))$ for $\bfu\in W^{1,2}(\mt)$, together with
\begin{align}\label{eq:phi0}
\|\Phi(\bfu)-\Phi(\bfv)\|_{L_2(\mathfrak U;L^2_x)}&\leq\,c\|\bfu-\bfv\|_{L^2_x}\qquad\forall \bfu,\bfv\in L^2(\mt),\\
\label{eq:phi1a}
\|\Phi(\bfu)\|_{L_2(\mathfrak U;W^{1,2}_x)}&\leq\,c\big(1+\|\bfu\|_{W^{1,2}_x}\big)\qquad\forall \bfu\in W^{1,2}(\mt),\\
\label{eq:phi1b}
\|D\Phi(\bfu)\|_{L_2(\mathfrak U;\mathcal L( L^{2}(\mt);L^2(\mt)))}&\leq\,c\qquad\forall \bfu\in L^{2}(\mt).
\end{align}
If we are interested in higher regularity some further assumptions are in place and we require additionally 
that $\Phi(\bfu)\in L_2(\mathfrak U;W^{2,2}(\mt))$ for $\bfu\in W^{2,2}(\mt)$, together with
\begin{align}\label{eq:phi2a}
&\|\Phi(\bfu)\|_{L_2(\mathfrak U;W^{2,2}_x)}\leq\,c\big(1+\|\bfu\|_{W^{1,4}_x}^2+\|\bfu\|_{W^{2,2}_x}\big)\qquad\forall \bfu\in W^{2,2}(\mt),\\\label{eq:phi2b}
&\|D^2\Phi(\bfu)\|_{L_2(\mathfrak U;\mathcal L( L^{2}(\mt)\times L^2(\mt);L^2(\mt)))}\leq\,c\qquad\forall \bfu\in L^{2}(\mt).
\end{align}

Assumption \eqref{eq:phi0} allows us to define stochastic integrals.
Given an $(\mathfrak F_t)$-adapted process $\bfu\in L^2(\Omega;C([0,T];L^2(\mt)))$, the stochastic integral $$t\mapsto\int_0^t\varPhi(\bfu)\,\dif W$$
is a well-defined process taking values in $L^2(\mt)$ (see \cite{PrZa} for a detailed construction). Moreover, we can multiply by test functions to obtain
 \begin{align*}
\bigg\langle\int_0^t \varPhi(\bfu)\,\dd W,\bfphi\bigg\rangle_{\!\! L^2_x}=\sum_{j\geq 1} \int_0^t\langle \varPhi(\bfu) e_j,\bfphi\rangle_{L^2_x}\,\dd\beta_j \qquad \forall\, \bfphi\in L^2(\mt).
\end{align*}
Similarly, we can define stochastic integrals with values in $W^{1,2}(\mt)$ and $W^{2,2}(\mt)$ respectively if $\bfu$ belongs to the corresponding class.


\subsection{The concept of solutions}
\label{subsec:solution}

We give the definition of a strong pathwise solution to \eqref{eq:SNS} which exists up to a stopping time $\mathfrak t$. The velocity field belongs $\p$-a.s. to $C([0,\mathfrak t];W^{2,2}_{\diver}(\mt))$.

\begin{definition}\label{def:strsol}
Let $(\Omega,\mf,(\mf_t)_{t\geq0},\prst)$ be a given stochastic basis with a complete right-continuous filtration and an $(\mf_t)$-cylindrical Wiener process $W$. Let $\bfu_0$ be an $\mf_0$-measurable random variable with values in $W^{2,2}_{\diver}(\mt)$. The tuple $(\bfu,\mathfrak t)$ is called a \emph{local strong pathwise solution} \index{incompressible Navier--Stokes system!weak pathwise solution} to \eqref{eq:SNS} with the initial condition $\bfu_0$ provided
\begin{enumerate}
\item $\mathfrak t$ is a $\p$-a.s. strictly positive $(\mathfrak F_t)$-stopping time;
\item the velocity field $\bfu$ is $(\mf_t)$-adapted and
$$\bfu(\cdot\wedge \mathfrak t) \in C([0,T];W^{2,2}_{\diver}(\mt))\cap L^2(0,T;W^{3,2}_{\diver}(\mt)) \quad\text{$\p$-a.s.},$$
\item the momentum equation
\begin{align}\label{eq:mom}
\begin{aligned}
&\int_{\mt}\bfu(t\wedge \mathfrak t)\cdot\bfvarphi\dx-\int_{\mt}\bfu_0\cdot\bfvarphi\dx
\\&=-\int_0^{t\wedge\mathfrak t}\int_{\mt}(\nabla\bfu)\bfu\cdot\bfvarphi\dx\,\dif s+\mu\int_0^{t\wedge\mathfrak t}\int_{\mt}\Delta\bfu\cdot\bfvarphi\dx\,\dif s+\int_{\mt}\int_0^{t\wedge\mathfrak t}\Phi(\bfu) \, \dif W  \cdot\bfvarphi\,\dx
\end{aligned}
\end{align}
holds $\p$-a.s. for all $\bfvarphi\in C^{\infty}_{\diver}(\mt)$ and all $t\geq0$.
\end{enumerate}
\end{definition}
We finally define a maximal strong pathwise solution.
\begin{definition}[Maximal strong pathwise solution]\label{def:maxsol}
Fix a stochastic basis with a cylindrical Wiener process and an initial condition as in Definition \ref{def:strsol}. A triplet $$(\bfu,(\mathfrak{t}_R)_{R\in\N},\mathfrak{t})$$ is a maximal strong pathwise solution to system \eqref{eq:SNS} provided

\begin{enumerate}
\item $\mathfrak{t}$ is a $\p$-a.s. strictly positive $(\mathfrak{F}_t)$-stopping time;
\item $(\mathfrak{t}_R)_{R\in\mn}$ is an increasing sequence of $(\mathfrak{F}_t)$-stopping times such that
$\mathfrak{t}_R<\mathfrak{t}$ on the set $[\mathfrak{t}<\infty]$,
$\lim_{R\to\infty}\mathfrak{t}_R=\mathfrak t$ $\p$-a.s., and
\begin{equation}\label{eq:blowup}
\mathfrak t_R:=\inf \big\{t\in[0,\infty):\,\,\|\bfu(t)\|_{W^{2,2}_x}\geq R\big\}\quad \text{on}\quad [\mathfrak{t}<\infty] ,
\end{equation}
with the convention that $\mathfrak{t}_R=\infty$ if the set above is empty;
\item each triplet $(\bfu,\mathfrak{t}_R)$, $R\in\mn$,  is a local strong pathwise solution in the sense  of Definition \ref{def:strsol}.
\end{enumerate}
\end{definition}
The following result can be proved along the lines of \cite{Kim}, where the stochastic Navier--Stokes equations on the whole space $\R^3$ are considered with fractional differentiability $\sigma\in(3/2,2)$. As mentioned on \cite[page 2]{Kim} the case of differentiability $\sigma =2$ we are interested in is even easier.
\begin{theorem}\label{thm:inc2d}
Suppose that \eqref{eq:phi0}--\eqref{eq:phi2b} hold, and
 that $\bfu_0\in L^2(\Omega,W^{1,2}_{\diver}(\mt))$. Then there is a
unique maximal global strong pathwise solution to \eqref{eq:SNS} in the sense  of Definition \ref{def:maxsol}.
\end{theorem}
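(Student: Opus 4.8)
The plan is to construct the object of Definition \ref{def:maxsol} by the classical truncation-and-patching scheme of \cite{Kim}, carried out at the $W^{2,2}_{\diver}$-regularity level prescribed by Definition \ref{def:strsol} (which is the regularity entering the blow-up criterion \eqref{eq:blowup}, so the relevant initial datum is taken $W^{2,2}_{\diver}(\mt)$-valued). The only coefficient in \eqref{eq:SNS} that fails to be globally Lipschitz is the quadratic convective term $(\nabla\bfu)\bfu$, so I would first regularise it: fix a smooth cut-off $\theta_R\colon[0,\infty)\to[0,1]$ with $\theta_R\equiv 1$ on $[0,R]$ and $\theta_R\equiv 0$ on $[2R,\infty)$, and replace the drift by $\theta_R(\|\bfu\|_{W^{2,2}_x})\,(\nabla\bfu)\bfu$. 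For each $R\in\mn$ this map is globally Lipschitz (indeed bounded) from $W^{2,2}_{\diver}(\mt)$ into $W^{1,2}(\mt)$, so together with \eqref{eq:phi0}--\eqref{eq:phi1a} the truncated system has Lipschitz coefficients, and the standard theory of SPDEs with globally Lipschitz coefficients — a fixed-point argument for the mild formulation based on the analytic Stokes semigroup, or a spectral Galerkin scheme on the eigenfunctions of the Stokes operator — yields a unique $(\mf_t)$-adapted global-in-time solution $\bfu_R\in L^2(\Omega;C([0,T];W^{2,2}_{\diver}(\mt)))$ of the truncated problem.

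The technical heart is the $W^{2,2}$ a priori estimate. I would apply It\^o's formula to $t\mapsto\|\bfu_R(t)\|_{W^{2,2}_x}^2$ (equivalently to the Stokes energy $\|\Delta\bfu_R\|_{L^2_x}^2$, since $\bfu_R$ is divergence free on the torus), justified first at the Galerkin level and then passed to the limit. The viscous term furnishes the dissipation $-\mu\|\nabla\bfu_R\|_{W^{2,2}_x}^2$; the convective contribution is critical in three dimensions and is exactly where the cut-off is indispensable, allowing the top-order term to be bounded by Gagliardo--Nirenberg/Ladyzhenskaya interpolation and absorbed into the dissipation at the price of an $R$-dependent constant. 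The It\^o correction $\tfrac12\|\Phi(\bfu_R)\|_{L_2(\mathfrak U;W^{2,2}_x)}^2$ is controlled through \eqref{eq:phi2a}, and the stochastic integral through the Burkholder--Davis--Gundy inequality using \eqref{eq:phi1a} and \eqref{eq:phi2a}. A Gronwall argument then bounds $\E\sup_{t\le T}\|\bfu_R(t)\|_{W^{2,2}_x}^2+\E\int_0^T\|\bfu_R\|_{W^{3,2}_x}^2\dt$ in terms of $R$, $T$ and $\E\|\bfu_0\|_{W^{2,2}_x}^2$, which confirms the regularity $C([0,T];W^{2,2}_{\diver})\cap L^2(0,T;W^{3,2}_{\diver})$ and the variational identity \eqref{eq:mom} demanded in Definition \ref{def:strsol}.

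Next I would introduce the stopping times $\mathfrak t_R:=\inf\{t\ge0:\|\bfu_R(t)\|_{W^{2,2}_x}\ge R\}$. On the random interval $[0,\mathfrak t_R]$ the cut-off is inactive, so $\bfu_R$ solves the original system \eqref{eq:SNS} and $(\bfu_R,\mathfrak t_R)$ is a local strong pathwise solution in the sense of Definition \ref{def:strsol}. Pathwise uniqueness (below) forces $\bfu_{R'}=\bfu_R$ on $[0,\mathfrak t_R]$ whenever $R'>R$, so the $\mathfrak t_R$ increase and the fields are consistent; setting $\bfu:=\bfu_R$ on $[0,\mathfrak t_R]$ and $\mathfrak t:=\lim_{R\to\infty}\mathfrak t_R$ produces the triplet $(\bfu,(\mathfrak t_R)_R,\mathfrak t)$. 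Since the initial datum is $W^{2,2}_{\diver}(\mt)$-valued with $\p$-a.s.\ finite norm, for $\p$-a.e.\ $\omega$ one has $\|\bfu_0\|_{W^{2,2}_x}<R$ for all large $R$, whence $\mathfrak t_R>0$ by path continuity and $\mathfrak t\ge\mathfrak t_R>0$, giving the strict positivity required in Definition \ref{def:maxsol}. On $[\mathfrak t<\infty]$ the definition of $\mathfrak t_R$ and path continuity give $\|\bfu(\mathfrak t_R)\|_{W^{2,2}_x}=R\to\infty$ as $R\to\infty$, so $\bfu$ leaves every ball before $\mathfrak t$ and \eqref{eq:blowup} holds with $\lim_{R\to\infty}\mathfrak t_R=\mathfrak t$; this delivers precisely the maximal solution of Definition \ref{def:maxsol}.

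Finally, pathwise uniqueness — which both legitimises the patching and yields uniqueness of the triplet — I would obtain by comparing two local solutions $\bfu^1,\bfu^2$ with the same initial datum. It\^o's formula applied to $\|\bfu^1-\bfu^2\|_{L^2_x}^2$, using that the divergence-free structure eliminates the critical part of the convective difference while the remainder is controlled by the $W^{2,2}$-bounds on $\bfu^1,\bfu^2$ up to a joint stopping time, together with the Lipschitz bound \eqref{eq:phi0} for the noise, leads after a stopping-time Gronwall argument to $\bfu^1=\bfu^2$ on the common interval of existence; localising in $R$ upgrades this to uniqueness of the maximal triplet. The main difficulty throughout is the $W^{2,2}$ a priori estimate: in three dimensions the convective term is critical, so the estimate closes only after truncation and on a random time interval, which is exactly the mechanism generating the increasing sequence $(\mathfrak t_R)_R$ and the maximal time $\mathfrak t$ of Definition \ref{def:maxsol}, and the stochastic corrections at this regularity level must moreover be absorbed using the second-order hypotheses \eqref{eq:phi2a}--\eqref{eq:phi2b}.
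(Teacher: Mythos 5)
Your truncation-and-patching scheme is essentially the route the paper takes: it offers no proof of its own but invokes \cite{Kim} (noting the case $\sigma=2$ is easier than $\sigma\in(3/2,2)$ treated there), and that argument --- mirrored by the paper's own truncated system \eqref{eq:trun} in the proof of Lemma \ref{lem:reg} --- consists precisely of your steps: cut-off of the convective term at the $W^{2,2}$ level, global solvability of the Lipschitz truncated problem via Galerkin or mild-solution fixed point, $R$-dependent $W^{2,2}$ energy estimates with the critical convective contribution absorbed into the dissipation, the stopping times $\mathfrak t_R$ of \eqref{eq:blowup}, and patching by pathwise uniqueness. Your silent replacement of the stated hypothesis $\bfu_0\in L^2(\Omega,W^{1,2}_{\diver}(\mt))$ by $W^{2,2}_{\diver}(\mt)$-valued data is also the correct reading, since Definition \ref{def:strsol} and the blow-up criterion \eqref{eq:blowup} operate at the $W^{2,2}$ level.
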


\subsection{Finite elements}
We work with a standard finite element set-up for incompressible fluid mechanics, see e.g. \cite{GR}.
We denote by $\mathscr{T}_h$ a quasi-uniform subdivision of $\mt$ into simplices of maximal diameter $h>0$.   
For $K\subset \setR ^3$ and $\ell\in \setN _0$ we denote by
$\mathscr{P}_\ell(K)$ the polynomials on $K$ of degree less than or equal
to $\ell$. 
Let us
characterize the finite element spaces $V^h(\mt)$ and $P^h(\mt)$ as
\begin{align*}
  V^h(\mt)&:= \set{\bfv_h \in W^{1,2}(\mt)\,:\, \bfv_h|_{K}
    \in \mathscr{P}_i(K)\,\,\forall K\in \mathscr T_h},\\
P^h(\mt)&:=\set{\pi_h \in L^{2}(\mt)\,:\, \pi_h|_{K}
    \in \mathscr{P}_j(K)\,\,\forall K\in \mathscr T_h}.
\end{align*}
We will assume that $i,j\in\N$ to get \eqref{eq:stab'} below.
In order to guarantee stability of our approximations we relate $V^h(\mt)$ and $P^h(\mt)$ by the discrete inf-sup condition, that is we assume that
\begin{align*}
\sup_{\bfv_h\in V^h(\mt)} \frac{\int_{\mt}\Div\bfv_h\,\pi_h\dx}{\|\nabla\bfv_h\|_{L^2_x}}\geq\,C\,\|\pi_h\|_{L^2_x}\quad\,\forall\pi_h\in P^h(\mt),
\end{align*}
where $C>0$ does not depend on $h$. This gives a relation between $i$ and $j$ (for instance the choice $(i,j)=(1,0)$ is excluded whereas $(i,j)=(2,0)$ is allowed).
Finally, we define the space of discretely solenoidal finite element functions by 
\begin{align*}
  V^h_{\Div}(\mt)&:= \bigg\{\bfv_h\in V^h(\mt):\,\,\int_{\mt}\Div\bfv_h\,\,\pi_h\dx=0\,\,\forall\pi_h\in P^h(\mt)\bigg\}.
\end{align*}
Let $\Pi_h:L^2(\mt)\rightarrow V_{\Div}^h(\mt)$ be the $L^2(\mt)$-orthogonal projection onto $V_{\Div}^h(\mt)$. The following results concerning the approximability of $\Pi_h$ are well-known (see, for instance \cite{HR}). There is $c>0$ independent of $h$ such that we have
  \begin{align}
    \label{eq:stab}
 \int_{\mt} \Big|\frac{\bfv-\Pi_h \bfv}{h}\Big|^2\dx+ \int_{\mt} \abs{\nabla\bfv-\nabla\Pi_h \bfv}^2\dx &\leq
    \,c\, \int_{\mt} \abs{\nabla
      \bfv}^2\dx
  \end{align}
for all $\bfv\in W^{1,2}_{\Div}(\mt)$, and
  \begin{align}
    \label{eq:stab'}
 \int_{\mt} \Big|\frac{\bfv-\Pi_h \bfv}{h}\Big|^2\dx+ \int_{\mt} \abs{\nabla\bfv-\nabla\Pi_h \bfv}^2\dx &\leq
    \,c\,h^2 \int_{\mt} \abs{\nabla^2
      \bfv}^2\dx
  \end{align}
for all $\bfv\in W^{2,2}_{\Div}(\mt)$. Similarly, if $\Pi_h^\pi:L^2(\mt)\rightarrow P^h(\mt)$ denotes the $L^2(\mt)$-orthogonal projection onto $P^h(\mt)$, we have
\begin{align}
\label{eq:stabpi}
 \int_{\mt} \Big|\frac{p-\Pi_h^\pi p}{h}\Big|^2\dx &\leq
    \,c\, \int_{\mt} \abs{\nabla
      p}^2\dx
\end{align}
for all $p\in W^{1,2}(\mt)$, and
\begin{align}
\label{eq:stabpi'}
 \int_{\mt} \Big|\frac{p-\Pi_h^\pi p}{h}\Big|^2\dx &\leq
    \,c\,h^2 \int_{\mt} \abs{\nabla^2
      p}^2\dx
\end{align}
for all $p\in W^{2,2}(\mt)$. Note that \eqref{eq:stabpi'} requires the assumption $j\geq1$ in the definition of $P^h(\mt)$, whereas \eqref{eq:stabpi} also holds for $j=0$.

\section{Regularity of solutions}
\subsection{Estimates for the continuous solution}
In this section we derive the crucial estimates for the
continuous solution, which hold up to the stopping time $\mathfrak t_R$ for some $R\gg1$.
\begin{lemma}\label{lem:reg}
 \begin{enumerate}
Let $(\Omega,\mf,(\mf_t)_{t\geq0},\prst)$ be a given stochastic basis with a complete right-continuous filtration and an $(\mf_t)$-cylindrical Wiener process $W$. Suppose that $\bfu_0$ is an $\mathfrak F_0$-measurable random variable with values in $W^{2,2}_{ \Div}(\mt)$.
Let $(\bfu,(\mathfrak t_R)_{R\in\N},\mathfrak t)$ be the maximal strong pathwise solution to \eqref{eq:SNS}, cf. Definition \ref{def:maxsol}.
\item[(a)] Assume 
that $\bfu_0\in L^r(\Omega,L^{2}_{\Div}(\mt))$ for some $r\geq2$ and 
that $\Phi$ satisfies \eqref{eq:phi0}. Then we have
\begin{align}\label{eq:W12}
\E\bigg[ \bigg(\sup_{0\leq t\leq T}\int_{\mt}|\bfu(t\wedge\mathfrak t_R)|^2\dx+\int_0^{T\wedge t_R}\int_{\mt}|\nabla\bfu|^2\dxt\bigg)^{\frac{r}{2}}\bigg]\leq\,c\,\E\Big[1+\|\bfu_0\|_{L^2_x}^r\Big].
\end{align}
\item[(b)] Assume 
that $\bfu_0\in L^r(\Omega,W^{1,2}_{0,\Div}(\mt))$ for some $r\geq2$ and 
that $\Phi$ satisfies \eqref{eq:phi0}--\eqref{eq:phi1b}. Then we have
\begin{align}\label{eq:W22}\begin{aligned}
\E\bigg[\bigg(\sup_{0\leq t\leq T}\int_{\mt}|\nabla\bfu(t\wedge \mathfrak t_R)|^2\dx&+\int_0^{T\wedge \mathfrak t_R}\int_{\mt}|\nabla^2\bfu|^2\dxt\bigg)^{\frac{r}{2}}\bigg]\\&\leq\,cR^{3r}\,\E\Big[1+\|\bfu_0\|_{W^{1,2}_x}^r\Big].
\end{aligned}
\end{align}
\item[(c)] Assume 
that $\bfu_0\in L^r(\Omega,W^{2,2}(\mt))$ for some $r\geq2$ and 
that assumptions \eqref{eq:phi0}--\eqref{eq:phi2b} hold. Then we have
\begin{align}\label{eq:W32}
\begin{aligned}
\E\bigg[\bigg(\sup_{0\leq t\leq T}\int_{\mt}|\nabla^2\bfu(t\wedge \mathfrak t_R)|^2\dx&+\int_0^{T\wedge \mathfrak t_R}\int_{\mt}|\nabla^3\bfu|^2\dxt\bigg)^{\frac{r}{2}}\bigg]\\&\leq\,cR^{3r}\,\E\Big[1+\|\bfu_0\|_{W^{2,2}_x}^r\Big].
\end{aligned}
\end{align}
\end{enumerate}
Here $c=c(r,T)$ is independent of $R$.
\end{lemma}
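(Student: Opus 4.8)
The plan is to run the stochastic energy method at three successive regularity levels—$L^2$, $W^{1,2}$ and $W^{2,2}$—and to bootstrap, feeding the output of part (a) into part (b) and of part (b) into part (c). After projecting \eqref{eq:SNS} onto divergence-free fields with the Leray projection (so that the pressure disappears and $\Delta$ commutes with the projection on $\mt$), I would apply It\^o's formula to the functionals $\bfu\mapsto\|\bfu\|_{L^2_x}^2$, $\|\nabla\bfu\|_{L^2_x}^2$ and $\|\nabla^2\bfu\|_{L^2_x}^2$, always evaluated at $t\wedge\mathfrak{t}_R$. The truncation at $\mathfrak{t}_R$ plays a double role: it renders every quantity integrable, and—via \eqref{eq:blowup}—it supplies the pointwise bound $\|\bfu(s)\|_{W^{2,2}_x}\le R$ for $s<\mathfrak{t}_R$, which together with the three-dimensional embeddings $W^{2,2}(\mt)\hookrightarrow W^{1,6}(\mt)\hookrightarrow L^\infty(\mt)$ yields $\|\bfu\|_{L^\infty_x}+\|\nabla\bfu\|_{L^6_x}\le cR$ up to the stopping time. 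This is the only place where $R$ enters, and the whole point is to extract only \emph{finitely many powers} of $R$, never to Gr\"onwall an $R$-dependent coefficient (which would produce an exponential rather than the polynomial $R^{3r}$).

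For part (a) the convective term vanishes by the antisymmetry $\int_{\mt}(\nabla\bfu)\bfu\cdot\bfu\dx=0$ and the pressure drops; It\^o then gives the identity for $\|\bfu\|_{L^2_x}^2$ with dissipation $\mu\int\|\nabla\bfu\|_{L^2_x}^2\dt$, an It\^o correction controlled by $\|\Phi(\bfu)\|_{L_2(\mathfrak U;L^2_x)}^2\le c(1+\|\bfu\|_{L^2_x}^2)$ from \eqref{eq:phi0}, and a martingale. Taking the supremum in time, raising to the power $r/2$, estimating the martingale by Burkholder--Davis--Gundy and closing with Gr\"onwall yields \eqref{eq:W12} with a constant independent of $R$. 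For part (b) I would test the projected equation against $-\Delta\bfu$; the only dangerous term is $\int_{\mt}(\nabla\bfu)\bfu\cdot\Delta\bfu\dx$, which I bound by $\|\bfu\|_{L^\infty_x}\|\nabla\bfu\|_{L^2_x}\|\nabla^2\bfu\|_{L^2_x}\le cR\,\|\nabla\bfu\|_{L^2_x}\|\nabla^2\bfu\|_{L^2_x}$ and split by Young so that $\tfrac{\mu}{2}\|\nabla^2\bfu\|_{L^2_x}^2$ is absorbed into the dissipation while a term $cR^2\|\nabla\bfu\|_{L^2_x}^2$ remains. Crucially I would \emph{not} Gr\"onwall this last term; instead I integrate it in time and bound $\int_0^{T\wedge\mathfrak{t}_R}\|\nabla\bfu\|_{L^2_x}^2\dt$ by the output of part (a). The It\^o correction is handled by \eqref{eq:phi1a}--\eqref{eq:phi1b} and the martingale again by BDG; this produces the factor $R^r$ at this level, which is dominated by the claimed $R^{3r}$.

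Part (c) is the delicate step. Testing against $\Delta^2\bfu$ (equivalently applying $\nabla^2$ and pairing with $\nabla^2\bfu$), the convective contribution is $\int_{\mt}\nabla^2\big[(\nabla\bfu)\bfu\big]:\nabla^2\bfu\dx$; expanding by Leibniz, the term in which both derivatives hit the transported factor cancels by antisymmetry as in part (a), and the remaining terms are estimated by distributing the three derivatives through H\"older together with the Gagliardo--Nirenberg inequality $\|\nabla^2\bfu\|_{L^4_x}^2\le c\|\nabla^2\bfu\|_{L^2_x}^{1/2}\|\nabla^3\bfu\|_{L^2_x}^{3/2}$ and the embedding $\|\nabla^2\bfu\|_{L^6_x}\le c\|\nabla^3\bfu\|_{L^2_x}$. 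Using $\|\nabla\bfu\|_{L^2_x},\|\nabla^2\bfu\|_{L^2_x}\le R$ one is left, after Young, with the highest-order part absorbed into $\mu\int\|\nabla^3\bfu\|_{L^2_x}^2\dt$ and a polynomial remainder whose worst coefficient is of order $R^{6}$; integrating in time and raising to the power $r/2$ gives exactly the $R^{3r}$ of \eqref{eq:W32}. The lower-order time integrals are now controlled by parts (a) and (b), and the It\^o/martingale terms by \eqref{eq:phi2a}--\eqref{eq:phi2b} and BDG.

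The main obstacle is precisely this $W^{2,2}$-level nonlinear estimate: one must split the derivatives so that (i) only one factor carries the top derivative $\nabla^3\bfu$, which is then absorbed by the viscous dissipation, and (ii) every remaining factor is controlled either by the stopping-time bound $R$ or again by the dissipation via Gagliardo--Nirenberg, so that the resulting coefficient is a fixed polynomial in $R$. Getting this bookkeeping right—so that the $R$-dependence stays polynomial and the concluding Gr\"onwall and BDG steps are applied only to $R$-independent coefficients—is what makes the constants come out as $cR^{3r}$ rather than as $c\,e^{cR^2T}$.
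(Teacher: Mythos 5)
Your proposal is correct and follows essentially the same route as the paper: It\^o's formula for the $L^2$, $W^{1,2}$ and $W^{2,2}$ energies, with the convective term controlled through the bound $\|\bfu\|_{W^{2,2}_x}\le R$ available up to the stopping time (via the embeddings $W^{2,2}(\mt)\hookrightarrow L^\infty(\mt)$ and $W^{2,2}(\mt)\hookrightarrow W^{1,4}(\mt)$), the top-order piece absorbed into the dissipation, each level bootstrapped from the previous one, and Burkholder--Davis--Gundy for the martingale parts. The only cosmetic difference is that the paper implements this rigorously by passing to a globally defined solution of a $\zeta_R$-truncated system that coincides with $\bfu$ up to $\mathfrak t_R$, rather than applying It\^o directly to the stopped process as you do.
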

\begin{proof}
Let us suppose that $\bfu_0\in L^\infty(\Omega,W^{2,2}(\mt))$. This assumption can be removed eventually by truncating $\bfu_0$.
Similar to \cite{Mi} we consider the solution to a truncated problem. For $R>1$ and $\zeta\in C_c^\infty([0,2))$ with $0\leq \zeta\leq 1$ and $\zeta=1$ in $[0,1]$ we set $\zeta_R:=\zeta(R^{-1}\cdot)$ . 
Let $\bfu^R$ be an $(\mathfrak F_t)$-adapted process with
$$\bfu^R \in C([0,T];W^{2,2}_{\diver}(\tor))\cap L^2(0,T; W^{3,2}_{\diver}(\tor))\quad\text{$\p$-a.s.}$$
 such that
\begin{align}\label{eq:trun}
\begin{aligned}
\int_{\mt}\bfu^R(t)\cdot\bfvarphi\dx&=\int_{\mt}\bfu_0\cdot\bfvarphi\dx
+\int_0^t\int_{\mt}\zeta_R(\|\bfu^{R}\|_{W^{2,2}_x})\bfu^{R}\otimes\bfu^{R} :\nabla\bfphi\dx\,\dif s\\&-\mu\int_0^t\int_{\mt}\nabla\bfu^R:\nabla\bfvarphi\dx\,\dif s+\int_0^t\zeta_R(\|\bfu^{R}\|_{W^{2,2}_x})\int_{\mt}\Phi(\bfu^R)\cdot\bfvarphi\dx\,\dif W,
\end{aligned}
\end{align}
holds $\p$-a.s. for all $\bfvarphi\in W^{1,2}_{\diver}(\mt)$ and all $t\in[0,T]$.
It can be shown by means of a Glarking approximation
(and highger order energy estimates which hold thanks to the periodic boundary conditions)
that a unique strong pathwise solution to \eqref{eq:trun} exists.
We finally note
that $\bfu^R(\cdot\wedge\mathfrak t_R)=\bfu(\cdot\wedge \mathfrak t_R)$ such that is sufficient to prove the claimed estimates for $\bfu^R$ instead of $\bfu$.

Estimate \eqref{eq:W12} is the standard a priori estimate which can be proved by applying It\^{o}'s formula to the functional $t\mapsto \|\bfu^R\|_{L^2_x}^2$ and using the cancellation of the convective term.

As far as \eqref{eq:W22} is concerned we can apply
It\^{o}'s formula to $t\mapsto\|\nabla\bfu^R\|_{L^2_x}^2$ and use \eqref{eq:trun}, which yields
\begin{align*}
\int_{\mt}|\nabla\bfu^{R}|^2\dx&=\int_{\mt}|\nabla\bfu_0|^2\dx
+2\int_0^t\int_{\mt}\zeta_R(\|\bfu^{R}\|_{W^{2,2}_x})(\bfu^{R}\cdot\nabla)\bfu^{R}\cdot\A\bfu^{R}\dx\,\dif s\\&-2\mu\int_0^t\int_{\mt}|\A\bfu^{R}|^2\dx\,\dif s+2\int_0^t\int_{\mt}\zeta_R(\|\bfu^{R}\|_{W^{2,2}_x})\Phi(\bfu^{R})\cdot\A\bfu^{R}\dx\,\dif W\\
&+\sum_{k=1}^\infty\int_0^t\bigg(\zeta_R(\|\bfu^{R}\|_{W^{2,2}_x})\int_{\mt}\nabla\{\Phi(\bfu^{R})e_k\}\dx\bigg)^2\,\dif s\\
&=:\mathrm{I}(t)+\dots+\mathrm{V}(t).
\end{align*}
We only show how to estimate the
convective term $\mathrm{II}$, which significantly differs from the 2D case. We refer to \cite[Lemma 3.1]{BrPr}, where it is shown how to control the remaining integrals independently of $R$.
We have by definition of $\zeta_R$ and the embedding $W^{2,2}(\mt)\hookrightarrow L^\infty(\mt)$
\begin{align*}
\mathrm{II}(t)
&\leq 2\int_0^t\zeta_R(\|\bfu^{R}\|_{W^{2,2}_x})\|\bfu^{R}\|_{L^\infty_x}\|\nabla\bfu^{R}\|_{L^2_x}\|\A\bfu^{N}\|_{L^2_x}\dif s\leq \,cR^3.
\end{align*}
For (c) is we argue similarly to (b)\footnote{In \cite[Lemma 3.1. (c)]{BrPr} a completely different approach is used for the corresponding estimate due to
problems related to Dirichlet boundary conditions.} and apply
It\^{o}'s formula to the mapping $t\mapsto \int_{\mt}|\A \bfu^{R}(t)|^2\dx$ which shows
\begin{align*}
\int_{\mt}|\A\bfu^{R}|^2\dx&=\int_{\mt}|\A\bfu_0|^2\dx
+2\int_0^t\int_{\mt}\zeta_R(\|\bfu^{R}\|_{W^{2,2}_x})(\bfu^{R}\cdot\nabla)\bfu^{R}\cdot\A^2\bfu^{R}\dx\,\dif s\\&-2\mu\int_0^t\int_{\mt}|\nabla\A\bfu^{R}|^2\dx\,\dif s+2\int_0^t\zeta_R(\|\bfu^{R}\|_{W^{2,2}_x})\int_{\mt}\Phi(\bfu^{R})\cdot\A^2\bfu^{R}\dx\,\dif W\\
&+\sum_{k=1}^\infty\int_0^t\bigg(\zeta_R(\|\bfu^{R}\|_{W^{2,2}_x})\int_{\mt}\nabla^2\{\Phi(\bfu^{R})e_k\}\dx\bigg)^2\,\dif s\\
&=:\mathrm{VI}(t)+\dots+\mathrm{X}(t).
\end{align*}
It holds using the embedding $W^{2,2}(\mt)\hookrightarrow W^{1,4}(\mt)$ that 
\begin{align*}
\mathrm{VII}(t)&\leq 2\int_0^t\zeta_R(\|\nabla\bfu^{R}\|_{L^2_x})\|\bfu^{R}\|_{L^\infty_x}\|\nabla^2\bfu^{R}\|_{L^2_x}\|\nabla\A\bfu^{R}\|_{L^2_x}\dif s\\
&+2\int_0^t\zeta_R(\|\nabla\bfu^{R}\|_{L^2_x})\|\nabla\bfu^{R}\|_{L^4_x}^2\|\nabla\A\bfu^{R}\|_{L^2_x}\dif s\\
&\leq \,cR\int_0^t\|\nabla^2\bfu^{R}\|_{L^2_x}\|\nabla\A\bfu^{R}\|_{L^2_x}\dif s\\
&\leq \,c(\delta)R^2\int_0^t\|\nabla^2\bfu^{R}\|_{L^2_x}^2\dif s+\,\delta\int_0^t\|\nabla\A\bfu^{R}\|_{L^2_x}^2\dif s
\end{align*}
for any $\delta>0$. The expectation of the first term can be controlled by the estimates from (b).
As for the stochastic terms, we obtain
\begin{align*}
\mathrm{X}(t)&\leq\sum_{k\geq1}\int_0^{t}\int_{\mt}|\zeta_R(\|\bfu^{R}\|_{W^{2,2}_x})\A\Phi(\bfu^{R})e_k|^2\dx\,\dif s\\
&\leq\,c\int_0^{t}\zeta_R(\|\bfu^{R}\|_{W^{2,2}_x})\big(1+\|\bfu^{R}\|_{W^{1,4}_x}^4+\|\bfu^{R}\|^2_{W^{2,2}_x}\big)\,\dif s\\
&\leq\,cR^2\int_0^t\big(1+\|\nabla^2\bfu^{R}\|_{L^2_x}^2\big)\dif s
\end{align*}
as well as
\begin{align*}
\E\bigg[\sup_{0\leq t\leq T}|\mathrm{IX}(t)|^{\frac{r}{2}}\bigg]
&\leq c\,\E\bigg[\bigg(\int_0^{T}
\zeta_R(\|\bfu^{R}\|_{W^{2,2}_x})\big(1+\|\bfu^{R}\|_{W^{1,4}_x}^4+\|\bfu^{R}\|^2_{W^{2,2}_x}\big)\|\Delta\bfu^{R}\|^2_{L^2_x}\dt\bigg)^{\frac r4}\bigg]\\
&\leq  \,cR^{\frac{3r}{2}}.
\end{align*}
The proof is complete.
\end{proof}

\subsection{Stochastic pressure decomposition}\label{sec:stochpress}
Since we will be working with discretely divergence-free function spaces in the finite-element analysis, it is inevitable to introduce the pressure function. 
For $\bfphi\in C^\infty(\mt)$ we can insert $$\mathcal P\bfphi:=\bfphi-\nabla\Delta^{-1}\Div\bfphi$$
in \eqref{eq:mom}. Here $\Delta^{-1}$ is the solution operator to the Laplace equation with respect to periodic boundary conditions on $\mt$. Note that $\Delta^{-1}$ satisfies
\begin{align}\label{eq:Delta1}
\Delta^{-1}&:W^{-2,p}(\mt)\rightarrow  L^{p}(\mt),\\
\label{eq:Delta2}
\Delta^{-1}&:W^{-1,p}(\mt)\rightarrow W^{1,p}(\mt),\\
\label{eq:Delta3}
\Delta^{-1}&:W^{r,p}(\mt)\rightarrow W^{r+2,p}(\mt),
\end{align}
for all $p\in(1,\infty)$ and all $r\in\N$, where $W^{-k,p}(\mt)=\big(W^{k,p'}(\mt)\big)'$ for $k\in\N$.
We obtain
\begin{align}
\nonumber
\int_{\mt}\bfu(t\wedge \mathfrak t_R)\cdot\bfvarphi\dx &-\int_0^{t\wedge \mathfrak t_R}\int_{\mt}\mu\Delta \bfu\cdot\bfphi\dxs+\int_0^{t\wedge \mathfrak t_R}\int_{\mt}(\nabla\bfu)\bfu\cdot\bfphi\dxs\\
\label{eq:pressure}&=\int_{\mt}\bfu(0)\cdot\bfvarphi\dx
+\int_0^{t\wedge \mathfrak t_R}\int_{\mt}\pi_{\mathrm{det}}\,\Div\bfphi\dxs
\\
\nonumber&+\int_{\mt}\int_0^{t\wedge \mathfrak t_R}\Phi(\bfu)\,\dd W\cdot \bfvarphi\dx+\int_{\mt}\int_0^{t\wedge \mathfrak t_R}\Phi^\pi\,\dd W\cdot \bfvarphi\dx,
\end{align}
where 
\begin{align*}
\pi_{\mathrm{det}}&=-\Delta^{-1}\Div\big((\nabla\bfu)\bfu\big),\\
\Phi^\pi&=-\nabla\Delta^{-1}\Div\Phi(\bfu).
\end{align*}
This corresponds to the stochastic pressure decomposition introduced in \cite{Br} (see also \cite[Chap. 3]{Br2} for a slightly different presentation). 
In the following we will analyse how the regularity of $\bfu$ transfers to $\pi_{\mathrm{det}}$
and $\Phi^\pi$. Arguing as in \cite[Corollary 2.5]{BrDo} and using \eqref{eq:Delta1}--\eqref{eq:Delta3} we obtain
\begin{align*}
\E\bigg[\bigg(\int_0^{T\wedge \mathfrak t_R}\|\pi_{\mathrm det}\|_{W^{\ell,2}_x}^2\dt\bigg)^{\frac{r}{4}}\bigg]
&\leq\,c\,\E\bigg[\bigg(\sup_{0\leq t\leq T}\|\bfu(t\wedge \mathfrak t_R)\|_{W^{\ell,2}_x}^2+\int_0^{T\wedge \mathfrak t_R}\|\bfu\|_{W^{\ell+1,2}_x}^2\dt\bigg)^{\frac{r}{2}}\bigg]
\end{align*}
as well as
\begin{align*}
\E\bigg[\bigg(\sup_{t\in[0, T\wedge \mathfrak t]}\|\Phi^\pi\|_{L_2(\mathfrak U;W^{\ell,2}_x)}^2\bigg)^{\frac{r}{2}}\bigg]
&\leq\,c\,\E\bigg[1+\sup_{0\leq t\leq T}\|\bfu(t\wedge \mathfrak t_R)\|_{W^{\ell,2}_x}^r\bigg]
\end{align*}
for $\ell\in\{0,1,2\}$ (note that $W^{0,p}(\mt)=L^p(\mt)$ for $p\in[1,\infty]$). Consequently, Lemma \ref{lem:reg} implies the following.
\begin{lemma}\label{lem:pressure}
\begin{enumerate}
\item Under the assumptions of Lemma \ref{lem:reg} (a) we have
\begin{align*}
\E\bigg[\bigg(\int_0^{T\wedge \mathfrak t_R}\|\pi_{\mathrm det}\|_{L^{2}_x}^2\dt+\sup_{0\leq t\leq T}\|\Phi^\pi(t\wedge \mathfrak t_R)\|_{L_2(\mathfrak U;L^{2}_x)}^2\bigg)^{\frac{r}{2}}\bigg]\leq\,c\,\E\Big[1+\|\bfu_0\|_{L^2_x}^r\Big].
\end{align*}
\item Under the assumptions of Lemma \ref{lem:reg} (b) we have
\begin{align*}
\E\bigg[\bigg(\int_0^{T\wedge \mathfrak t_R}\|\pi_{\mathrm det}\|_{W^{1,2}_x}^2\dt+\sup_{0\leq t\leq T}\|\Phi^\pi(t\wedge \mathfrak t_R)\|_{L_2(\mathfrak U;W^{1,2}_x)}^2\bigg)^{\frac{r}{2}}\bigg]\leq\,cR^{3r}\,\E\Big[1+\|\bfu_0\|_{W^{1,2}_x}^r\Big].
\end{align*}
\item Under the assumptions of Lemma \ref{lem:reg} (c) we have
\begin{align*}
\E\bigg[\bigg(\int_0^{T\wedge \mathfrak t_R}\|\pi_{\mathrm det}\|_{W^{2,2}_x}^2\dt+\sup_{0\leq t\leq T}\|\Phi^\pi(t\wedge \mathfrak t_R)\|_{L_2(\mathfrak U;W^{2,2}_x)}^2\bigg)^{\frac{r}{2}}\bigg]\leq\,cR^{3r}\,\E\Big[1+\|\bfu_0\|_{W^{2,2}_x}^r\Big].
\end{align*}
\end{enumerate}
Here $c=c(r,T)$ is independent of $R$.
\end{lemma}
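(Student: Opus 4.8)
The plan is to obtain all three assertions directly from the two estimates displayed just before the lemma, specialised to $\ell=0$, $\ell=1$ and $\ell=2$ respectively, by inserting the velocity bounds of Lemma \ref{lem:reg}. The only mechanism involved is that the right-hand sides of those two estimates are, up to adding lower-order terms, precisely the quantities controlled in \eqref{eq:W12}, \eqref{eq:W22} and \eqref{eq:W32}. I would therefore split the expectation in each assertion, via $(A+B)^{r/2}\le c(A^{r/2}+B^{r/2})$, into its deterministic and stochastic contributions and treat them separately, the deterministic pressure entering at the power $r/4$ forced by the quadratic dependence of $\pi_{\mathrm det}$ on $\bfu$, the stochastic pressure at the power $r/2$ coming from the linear growth of $\Phi$.

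For part (a) I take $\ell=0$. The displayed deterministic estimate bounds the $r/4$-moment of $\int_0^{T\wedge\mathfrak t_R}\|\pi_{\mathrm det}\|_{L^2_x}^2\dt$ by the $r/2$-moment of $\sup_t\|\bfu\|_{L^2_x}^2+\int_0^{T\wedge\mathfrak t_R}\|\bfu\|_{W^{1,2}_x}^2\dt$, which is exactly the left-hand side of \eqref{eq:W12}; hence Lemma \ref{lem:reg}(a) gives $c\,\E[1+\|\bfu_0\|_{L^2_x}^r]$. The displayed stochastic estimate bounds the $r/2$-moment of $\sup_t\|\Phi^\pi\|_{L_2(\mathfrak U;L^2_x)}^2$ by $\E[1+\sup_t\|\bfu\|_{L^2_x}^r]$, again controlled by \eqref{eq:W12}. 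For parts (b) and (c) I repeat this with $\ell=1,2$, writing $\|\bfu\|_{W^{\ell,2}_x}^2$ and $\|\bfu\|_{W^{\ell+1,2}_x}^2$ as sums of seminorms $\|\nabla^k\bfu\|_{L^2_x}^2$; the top-order seminorm is supplied by \eqref{eq:W22} (for $\ell=1$) or \eqref{eq:W32} (for $\ell=2$) and the lower-order ones by \eqref{eq:W12}--\eqref{eq:W22}. This is where the factor $R^{3r}$ and the initial-data moments $\E[1+\|\bfu_0\|_{W^{1,2}_x}^r]$ resp. $\E[1+\|\bfu_0\|_{W^{2,2}_x}^r]$ enter, inherited from Lemma \ref{lem:reg}(b),(c); since the constants there are independent of $R$, so is $c$ here, and the whole $R$-dependence is the explicit $R^{3r}$.

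Because the two displayed estimates are taken as given, the remaining work is pure bookkeeping and I expect no genuine obstacle at the level of the lemma. The analytic substance sits one step earlier, in establishing those two estimates: for $\pi_{\mathrm det}$ one rewrites $(\nabla\bfu)\bfu=\Div(\bfu\otimes\bfu)$, applies the mapping properties \eqref{eq:Delta1}--\eqref{eq:Delta3} of $\Delta^{-1}\Div$, and uses a product estimate in $W^{\ell,2}_x$ together with the factorisation $\int_0^{T\wedge\mathfrak t_R}\|\pi_{\mathrm det}\|_{W^{\ell,2}_x}^2\dt\lesssim(\sup_t\|\bfu\|_{W^{\ell,2}_x}^2)\int_0^{T\wedge\mathfrak t_R}\|\bfu\|_{W^{\ell+1,2}_x}^2\dt$, which is exactly why the power $r/4$ (and not $r/2$) is the consistent one on the quadratic pressure; for $\Phi^\pi$ one combines \eqref{eq:Delta2}--\eqref{eq:Delta3} with the growth bounds \eqref{eq:phi1a} and \eqref{eq:phi2a}. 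If one wished to reprove these rather than cite them, the single delicate point would be the product estimate for the convective term at $\ell=2$, where the embeddings $W^{2,2}(\mt)\hookrightarrow L^\infty(\mt)$ and $W^{2,2}(\mt)\hookrightarrow W^{1,4}(\mt)$ must be used exactly as in Lemma \ref{lem:reg}(c).
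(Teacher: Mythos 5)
Your proposal is correct and coincides with the paper's own proof, which consists precisely of citing the two displayed estimates immediately preceding the lemma (specialised to $\ell=0,1,2$) and then inserting the velocity bounds \eqref{eq:W12}--\eqref{eq:W32} from Lemma \ref{lem:reg}. The one wrinkle you flag --- that the deterministic pressure is only controlled at the exponent $r/4$ while the lemma states the combined quantity at exponent $r/2$ --- is an inconsistency already present in the paper's own displayed estimate, not something your argument introduces.
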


As in \cite[Corollary 2.6]{BrDo}, we can combine Lemmas \ref{lem:reg} and \ref{lem:pressure} to conclude the following result concerning the time regularity of $\bfu$.

\begin{corollary}\label{cor:uholder}
\begin{enumerate}
\item Let the assumptions of Lemma \ref{lem:reg} (b) be satisfied for some $r>2$. Then we have
\begin{align}
\label{eq:holder}
\E\Big[\Big(\|\bfu(\mathfrak t_R\wedge\cdot)\|_{C^\alpha([0,T];L^{2}_x)}\Big)^{\frac{r}{2}}\Big]\leq cR^{3r}\,\E\Big[1+\|\bfu_0\|_{W^{1,2}_x}^r\Big]
\end{align}
for all $\alpha<\frac{1}{2}$.
\item Let the assumptions of Lemma \ref{lem:reg} (c) be satisfied for some $r>2$. Then we have
\begin{align}
\label{eq:holder}
\E\Big[\Big(\|\bfu(\mathfrak t_R\wedge\cdot)\|_{C^\alpha([0,T];W^{1,2}_x)}\Big)^{\frac{r}{2}}\Big]\leq cR^{3r}\,\E\Big[1+\|\bfu_0\|_{W^{2,2}_x}^r\Big]
\end{align}
for all $\alpha<\frac{1}{2}$.
\end{enumerate}
Here $c=c(r,T,\alpha)$ is independent of $R$.
\end{corollary}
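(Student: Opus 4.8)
The plan is to decompose $\bfu(\cdot\wedge\mathfrak t_R)$ into its drift and its stochastic part and to bound their time-H\"older seminorms separately: the drift admits a pathwise $C^{1/2}$-bound by Cauchy--Schwarz in time, while the stochastic integral is treated by the Kolmogorov continuity theorem, which is responsible for the restriction $\alpha<\tfrac12$. I would first record the spatially strong form of the equation. Applying the Leray projection $\mathcal P$ (which on $\mt$ commutes with $\Delta$ and annihilates gradients), the regularity $\bfu(\cdot\wedge\mathfrak t_R)\in C([0,T];W^{2,2}_{\diver})\cap L^2(0,T;W^{3,2}_{\diver})$ guarantees that, for $X\in\{L^2(\mt),W^{1,2}(\mt)\}$,
\begin{align*}
\bfu(t\wedge\mathfrak t_R)=\bfu_0+\mathbf A(t)+\mathbf M(t),\qquad \mathbf A(t)=\int_0^{t\wedge\mathfrak t_R}\mathcal P\big(\mu\Delta\bfu-(\nabla\bfu)\bfu\big)\,\dd\sigma,\qquad \mathbf M(t)=\int_0^{t\wedge\mathfrak t_R}\mathcal P\Phi(\bfu)\,\dd W,
\end{align*}
holds as an identity in $X$.

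For the drift I would use Cauchy--Schwarz in time to obtain, pathwise and for all $0\le s\le t\le T$,
\begin{align*}
\|\mathbf A(t)-\mathbf A(s)\|_X\le |t-s|^{1/2}\Big(\int_0^{T\wedge\mathfrak t_R}\big\|\mathcal P\big(\mu\Delta\bfu-(\nabla\bfu)\bfu\big)\big\|_X^2\,\dd\sigma\Big)^{1/2},
\end{align*}
so that $\mathbf A\in C^{1/2}([0,T];X)$ with H\"older seminorm controlled by the time integral on the right. In case (a) the top-order contribution is $\|\Delta\bfu\|_{L^2}\le\|\nabla^2\bfu\|_{L^2}$, whose squared time-integral has a controlled $(r/2)$-moment by Lemma~\ref{lem:reg}(b); in case (b) it is $\|\Delta\bfu\|_{W^{1,2}}\lesssim\|\nabla^3\bfu\|_{L^2}$, controlled by Lemma~\ref{lem:reg}(c). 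The convective term is bounded pathwise: on $[0,\mathfrak t_R]$ one has $\|\bfu\|_{W^{2,2}}\le R$, whence $\|(\nabla\bfu)\bfu\|_X\le cR^2$ by $W^{2,2}\hookrightarrow L^\infty\cap W^{1,4}$ and the boundedness of $\mathcal P$ on $W^{k,2}(\mt)$. Taking $(r/2)$-moments (and using Jensen's inequality, permissible because the target bound is $\ge1$) yields the drift contribution $\le cR^{3r}\,\E[1+\|\bfu_0\|^r]$ with $\|\cdot\|=\|\cdot\|_{W^{1,2}_x}$ in case (a) and $\|\cdot\|=\|\cdot\|_{W^{2,2}_x}$ in case (b).

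For the stochastic part I would invoke the Burkholder--Davis--Gundy inequality: for every $\lambda\ge2$ and $0\le s\le t\le T$,
\begin{align*}
\E\big\|\mathbf M(t)-\mathbf M(s)\big\|_X^\lambda\le c_\lambda\,\E\Big(\int_{s\wedge\mathfrak t_R}^{t\wedge\mathfrak t_R}\|\mathcal P\Phi(\bfu)\|_{L_2(\mathfrak U;X)}^2\,\dd\sigma\Big)^{\lambda/2}.
\end{align*}
The essential observation is that on $[0,\mathfrak t_R]$ the integrand is bounded by a power of $R$ alone: by \eqref{eq:phi0}, \eqref{eq:phi1a} and the boundedness of $\mathcal P$ one has $\|\mathcal P\Phi(\bfu)\|_{L_2(\mathfrak U;X)}\le c(1+\|\bfu\|_X)\le c(1+R)$, so that, since $t\wedge\mathfrak t_R-s\wedge\mathfrak t_R\le t-s$,
\begin{align*}
\E\big\|\mathbf M(t)-\mathbf M(s)\big\|_X^\lambda\le c_\lambda(1+R)^\lambda\,|t-s|^{\lambda/2},
\end{align*}
with no moment assumption on $\bfu_0$ entering. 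Kolmogorov's continuity theorem then gives $\mathbf M\in C^\alpha([0,T];X)$ with $\E\|\mathbf M\|_{C^\alpha([0,T];X)}^\lambda\le c(1+R)^\lambda$ for every $\alpha<\tfrac12-\tfrac1\lambda$. Given the target exponent $\alpha<\tfrac12$, I would fix $\lambda\ge r$ so large that $\alpha<\tfrac12-\tfrac1\lambda$ and then descend to the moment $r/2$ by Jensen's inequality, $\E\|\mathbf M\|_{C^\alpha}^{r/2}\le(\E\|\mathbf M\|_{C^\alpha}^\lambda)^{r/(2\lambda)}\le c(1+R)^{r/2}$, which is dominated by $cR^{3r}\,\E[1+\|\bfu_0\|^r]$. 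Combining the two parts through $\|\bfu(\cdot\wedge\mathfrak t_R)\|_{C^\alpha}\le\|\bfu_0\|+\|\mathbf A\|_{C^\alpha}+\|\mathbf M\|_{C^\alpha}$ gives the assertion.

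The main obstacle is to reach H\"older exponents arbitrarily close to $1/2$ while the outer moment stays fixed at $r/2$: running Kolmogorov's theorem at integrability $r$ only reaches $\alpha<\tfrac12-\tfrac1r$. This is precisely where the stopping at $\mathfrak t_R$ is used: because the stopped solution is pathwise bounded in $W^{2,2}$ by $R$, both the diffusion coefficient and the lower-order parts of the drift are pathwise bounded by powers of $R$, so the increment estimates are available at \emph{every} integrability $\lambda$ with an $R$-dependent but $\bfu_0$-independent constant. This permits the large-$\lambda$ Kolmogorov argument, after which the moment is lowered to $r/2$; the only genuinely probabilistic input, and the origin of the factor $R^{3r}\,\E[1+\|\bfu_0\|^r]$, is the top-order drift term whose moments are furnished by Lemma~\ref{lem:reg}.
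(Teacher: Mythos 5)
Your argument is correct and follows essentially the same route as the paper, which obtains the corollary by combining Lemma \ref{lem:reg} with the pressure decomposition of Section \ref{sec:stochpress} (your Leray projection $\mathcal P=\mathrm{Id}-\nabla\Delta^{-1}\Div$ is exactly the decomposition into $\Phi(\bfu)+\Phi^\pi$ and $(\nabla\bfu)\bfu+\nabla\pid$ used there, and the Cauchy--Schwarz/Kolmogorov split is the argument of the cited \cite[Corollary 2.6]{BrDo}). The one point worth keeping explicit is the role of the stopping time in making the diffusion coefficient pathwise bounded by a power of $R$, which is what lets you run Kolmogorov at arbitrarily high integrability $\lambda$ before descending to the moment $r/2$; you have identified and handled this correctly.
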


\section{Time discretisation}\label{sec:time}
We now consider a temporal approximation of  \eqref{eq:SNS} on an equidistant partition of $[0,T]$ with mesh size $\tau=T/M$, and set $t_m=m\tau$. 
Let $\bfu_{0}$ be an $\mathfrak F_0$-measurable random variable with values in $L^{2}_{\Div}(\mt)$. For $1 \leq m \leq M$, we aim at constructing iteratively a sequence of $\mathfrak F_{t_m}$-measurable random variables $\bfu_{m}$ with values in $W^{1,2}_{\Div}(\mt)$ such that
for every $\bfphi\in W^{1,2}_{\Div}(\mt)$ it holds true $\p$-a.s.
\begin{align}\label{tdiscr}
\begin{aligned}
\int_{\mt}&\bfu_{m}\cdot\bfvarphi \dx -\tau\int_{\mt}\left( \bfu_{m}\otimes\bfu_{m-1} \right) :\nabla\bfphi\dx\\
&\qquad=-\mu\tau\int_{\mt}\nabla\bfu_{m}:\nabla\bfphi\dx+\int_{\mt}\bfu_{m-1}\cdot\bfvarphi \dx+\int_{\mt}\Phi(\bfu_{m-1})\,\Delta_mW\cdot \bfvarphi\dx,
\end{aligned}
\end{align}
where $\Delta_m W=W(t_m)-W(t_{m-1})$. 
For given $\bfu_{m-1}$ and $\Delta_m W$, verifying the existence of a unique $\bfu_m$ solving \eqref{tdiscr} is straightforward since the problem is linear in $\bfu_m$.

%
%
\begin{lemma}\label{lemma:3.1a} 
Assume that $\bfu_0\in L^{2^q}(\Omega,L^{2}_{\Div}(\mt;\R^2))$ for some $q\in\N$ and that \eqref{eq:phi0} holds. Then the iterates $(\bfu_m)_{m=1}^M$ given by \eqref{tdiscr}
satisfy the following estimate uniformly in $M$:
\begin{align}
\label{lem:3.1a}\E\bigg[\max_{1\leq m\leq M}\|\bfu_m\|^{2^q}_{L^{2}_x}+\tau\sum_{m=1}^M\|\bfu_m\|_{L^{2}_x}^{2^q-2}\|\nabla\bfu_m\|^2_{L^2_x}\bigg]&\leq\,c,
\end{align}
where $c=c(q,T,\Phi,\bfu_0)>0$. 
\end{lemma}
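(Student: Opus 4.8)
The plan is to establish the energy estimate \eqref{lem:3.1a} by an induction on the exponent $q$, running in each step the discrete analogue of the It\^o energy estimate that underlies \eqref{eq:W12}. First I would test the scheme \eqref{tdiscr} with $\bfphi=\bfu_m$. The crucial structural feature is the discrete cancellation of the convective term: since $\bfu_{m-1}$ is (discretely) divergence-free, the term $\tau\int_{\mt}\bfu_m\otimes\bfu_{m-1}:\nabla\bfu_m\dx$ vanishes exactly, just as in the continuous setting. Using the elementary polarisation identity $\langle\bfu_m-\bfu_{m-1},\bfu_m\rangle_{L^2_x}=\tfrac12\big(\|\bfu_m\|_{L^2_x}^2-\|\bfu_{m-1}\|_{L^2_x}^2+\|\bfu_m-\bfu_{m-1}\|_{L^2_x}^2\big)$, testing produces
\begin{align}\label{eq:base}
\begin{aligned}
\tfrac12\|\bfu_m\|_{L^2_x}^2-\tfrac12\|\bfu_{m-1}\|_{L^2_x}^2&+\tfrac12\|\bfu_m-\bfu_{m-1}\|_{L^2_x}^2+\mu\tau\|\nabla\bfu_m\|_{L^2_x}^2\\
&=\int_{\mt}\Phi(\bfu_{m-1})\Delta_mW\cdot\bfu_m\dx.
\end{aligned}
\end{align}
The left-hand side is exactly the discrete dissipation structure; the numerical diffusion term $\tfrac12\|\bfu_m-\bfu_{m-1}\|_{L^2_x}^2$ is nonnegative and can be kept or discarded as convenient.

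The main work is to control the stochastic term on the right-hand side of \eqref{eq:base} after summing over $m$ and taking the $2^{q-1}$-th power (so that the total power on $\|\cdot\|_{L^2_x}$ reaches $2^q$). The standard device is to split the noise contribution by writing $\bfu_m=\bfu_{m-1}+(\bfu_m-\bfu_{m-1})$ in $\int_{\mt}\Phi(\bfu_{m-1})\Delta_mW\cdot\bfu_m\dx$. The piece involving $\bfu_{m-1}$ is $\mathfrak F_{t_{m-1}}$-measurable against the increment $\Delta_mW$, hence forms a discrete martingale transform with zero conditional mean; after raising to the power $2^{q-1}$ and summing, it is estimated by the discrete Burkholder--Davis--Gundy inequality, whose quadratic variation is $\tau\sum_m\|\Phi(\bfu_{m-1})\|_{L_2(\mathfrak U;L^2_x)}^2$ and is bounded via \eqref{eq:phi0} by $c\tau\sum_m(1+\|\bfu_{m-1}\|_{L^2_x}^2)$. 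The remaining piece, pairing $\Phi(\bfu_{m-1})\Delta_mW$ with $\bfu_m-\bfu_{m-1}$, is the It\^o-correction analogue; it is handled by Young's inequality, absorbing $\tfrac14\|\bfu_m-\bfu_{m-1}\|_{L^2_x}^2$ into the numerical-diffusion term on the left while the leftover $c\|\Phi(\bfu_{m-1})\Delta_mW\|_{L^2_x}^2$ contributes, in conditional expectation, another factor $c\tau(1+\|\bfu_{m-1}\|_{L^2_x}^2)$.

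With these bounds, taking the maximum over $1\le m\le M$, raising to the power $2^{q-1}$, and taking expectations, one arrives at an inequality of the form
\begin{align}\label{eq:gron}
\begin{aligned}
\E\Big[\max_{1\le n\le M}\|\bfu_n\|_{L^2_x}^{2^q}\Big]+\E\Big[\tau\sum_{m=1}^M\|\bfu_m\|_{L^2_x}^{2^q-2}\|\nabla\bfu_m\|_{L^2_x}^2\Big]\\
\le\,c\,\E\big[\|\bfu_0\|_{L^2_x}^{2^q}\big]+c\,\tau\sum_{m=1}^M\E\Big[1+\max_{1\le n\le m}\|\bfu_n\|_{L^2_x}^{2^q}\Big],
\end{aligned}
\end{align}
where the BDG constant and the Young-inequality absorption are uniform in $M$. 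A discrete Gronwall lemma then closes the estimate with a constant $c=c(q,T,\Phi,\bfu_0)$ independent of $M$. The induction base is $q=0$ (or $q=1$), and at each step one uses the already-established bound for the lower power to control the lower-order factors appearing in the BDG quadratic variation; the gradient-weighted sum on the left is retained throughout because the dissipation term $\mu\tau\|\nabla\bfu_m\|_{L^2_x}^2$ in \eqref{eq:base} never gets absorbed.

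The step I expect to be the main obstacle is the bookkeeping of the BDG estimate at general power $2^q$: the martingale term must be estimated so that its quadratic variation, which carries a factor $\tau\sum_m(1+\|\bfu_{m-1}\|_{L^2_x}^2)$, is reconciled with the $2^{q}$-th moment on the left via Young's inequality in such a way that the stochastic contribution splits into an absorbable piece plus a piece feeding the discrete Gronwall argument. Keeping the constants independent of $M$ throughout this splitting — in particular ensuring the BDG constant does not accumulate a factor growing with $M$ — is the delicate point, though it is by now a routine (if careful) adaptation of the continuous a priori estimate to the discrete martingale setting.
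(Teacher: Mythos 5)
Your proposal is correct and follows essentially the same route as the paper: the paper simply cites \cite[Lemma 3.1]{BCP}, whose proof is exactly the discrete energy argument you outline (testing with $\bfu_m$, cancellation of the convective term thanks to $\Div\bfu_{m-1}=0$, splitting the noise term into a martingale part handled by Burkholder--Davis--Gundy and an increment part absorbed into the numerical diffusion, then discrete Gronwall and iteration in $q$). The one structural point the paper singles out --- that the semi-implicit convective term still cancels when tested with $\bfu_m$ --- is also correctly identified in your argument.
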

\begin{proof}
The proof of \eqref{lem:3.1a} is identical to \cite[Lemma 3.1]{BCP}. Note that, different from \cite{BCP}, we consider a semi-implicit algorithm, which does not impact the proof since the convective term still cancels when testing with $\bfu_m$.
 \end{proof}
 
\subsection{Estimates for the time-discrete solution}
\label{sec:disest}

 In order to obtain ``local-in-time'' estimates we consider
 a truncated variant of \eqref{tdiscr}, similarly to \eqref{eq:trun}. Let $\bfu_m^R\in W^{1,2}_{\Div}(\mt)$ be such that
 for every $\bfphi\in W^{1,2}_{\Div}(\mt)$, it holds true $\p$-a.s. that
\begin{align}\label{tdiscrR}
\begin{aligned}
\int_{\mt}\bfu_{m}^R\cdot\bfvarphi \dx &-\tau\int_{\mt}\zeta_R(\|\bfu_{m-1}^R\|_{W^{2,2}_x})\bfu^R_{m}\otimes\bfu_{m-1}^R :\nabla\bfphi\dx\\
&=-\mu\tau\int_{\mt}\nabla\bfu_{m}^R:\nabla\bfphi\dx+\int_{\mt}\bfu_{m-1}^R\cdot\bfvarphi \dx\\&+\int_{\mt}\zeta_R(\|\bfu_{m-1}^R\|_{W^{2,2}_x})\Phi(\bfu_{m-1}^R)\,\Delta_mW\cdot \bfvarphi\dx
\end{aligned}
\end{align}
with $\bfu_0^R=\bfu_0$. Arguing as for Lemma \ref{lemma:3.1a} 
and noticing that the convective term in \eqref{tdiscrR} still cancels when testing with $\bfu_m^R$ once can show for $q\in\N$
\begin{align}
\label{lem:3.1aR}\E\bigg[\max_{1\leq m\leq M}\|\bfu_m^R\|^{2^q}_{L^{2}_x}+\tau\sum_{m=1}^M\|\bfu_m^R\|_{L^{2}_x}^{2^q-2}\|\nabla\bfu^R_m\|^2_{L^2_x}\bigg]&\leq\,c
\end{align}
where $c=c(q,T,\Phi,\bfu_0)>0$ is independent of $R$. 
We are now going to prove $R$-dependent estimates for $(\bfu_m^R)_{m=1}^M$ which we transfer eventually to $(\bfu_m)_{m=1}^M$ by introducing a suitable discrete stopping time.


\begin{lemma}\label{lemma:3.1} 
Assume that $\bfu_0\in L^{2^q}(\Omega,W^{1,2}_{\Div}(\mt))$ for some $q\in\N$ and suppose that \eqref{eq:phi0}--\eqref{eq:phi1b} hold. Then the iterates $(\bfu_m^R)_{m=1}^M$ given by \eqref{tdiscrR}
satisfy the following estimates uniformly in $M$:
\begin{align}
\label{lem:3.1b}
\begin{aligned}
\E\bigg[\max_{1\leq m\leq M}\|\bfu_m^R\|^{2^q}_{W^{1,2}_x}&+\sum_{m=1}^{M}\tau\|\bfu_m^R\|_{W^{1,2}_x}^{2^q-2}\|\nabla^2\bfu_m^R\|^2_{L^2_x}\bigg]\\&+\E\bigg[\sum_{m=1}^{M}\|\bfu_m^R\|_{W^{1,2}_x}^{2^q-2}\|\nabla(\bfu_m^R-\bfu_{m-1}^R)\|^2_{L^2_x}\bigg]\leq\,ce^{cR^{2}},
\end{aligned}
\end{align}
where $c=c(q,T,\Phi,\bfu_0)>0$ is independent of $R$.
\end{lemma}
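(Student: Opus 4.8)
The plan is to derive a discrete counterpart of the continuous gradient estimate \eqref{eq:W22} by testing the truncated scheme \eqref{tdiscrR} with $-\A\bfu_m^R$ and reproducing, at the level of finite differences, the It\^o computation from the proof of Lemma \ref{lem:reg}. First I would check that $-\A\bfu_m^R$ is an admissible test function: for given $\bfu_{m-1}^R$, equation \eqref{tdiscrR} is a linear elliptic problem on $\mt$, so elliptic regularity (bootstrapping, using \eqref{eq:phi1a} to control the stochastic data in $W^{1,2}_x$) yields $\bfu_m^R\in W^{3,2}_{\Div}(\mt)$ for $m\geq1$, whence $-\A\bfu_m^R\in W^{1,2}_{\Div}(\mt)$; alternatively one derives all estimates on a spatial Galerkin approximation and passes to the limit. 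Testing with $-\A\bfu_m^R$ and using the identity $\langle\nabla(\bfu_m^R-\bfu_{m-1}^R),\nabla\bfu_m^R\rangle_{L^2_x}=\tfrac12\|\nabla\bfu_m^R\|_{L^2_x}^2-\tfrac12\|\nabla\bfu_{m-1}^R\|_{L^2_x}^2+\tfrac12\|\nabla(\bfu_m^R-\bfu_{m-1}^R)\|_{L^2_x}^2$ produces the telescoping energy, the numerical dissipation $\tfrac12\|\nabla(\bfu_m^R-\bfu_{m-1}^R)\|_{L^2_x}^2$ and the viscous term $\mu\tau\|\A\bfu_m^R\|_{L^2_x}^2$.

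Next I would treat the convective term. After integration by parts, and using $\Div\bfu_{m-1}^R=0$, it equals $-\tau\,\zeta_R(\|\bfu_{m-1}^R\|_{W^{2,2}_x})\int_{\mt}(\bfu_{m-1}^R\cdot\nabla)\bfu_m^R\cdot\A\bfu_m^R\dx$. On the support of $\zeta_R$ one has $\|\bfu_{m-1}^R\|_{W^{2,2}_x}\leq 2R$, hence $\|\bfu_{m-1}^R\|_{L^\infty_x}\leq cR$ by $W^{2,2}(\mt)\hookrightarrow L^\infty(\mt)$, and Young's inequality bounds this term by $\tfrac{\mu\tau}{4}\|\A\bfu_m^R\|_{L^2_x}^2+c\tau R^2\|\nabla\bfu_m^R\|_{L^2_x}^2$, the first part being absorbed into the viscous term. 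This is the decisive difference from the continuous setting: since the cut-off acts on $\bfu_{m-1}^R$ rather than on $\bfu_m^R$, one cannot bound $\|\A\bfu_m^R\|_{L^2_x}$ directly by $R$ -- as is done in Lemma \ref{lem:reg} to obtain the polynomial factor $R^{3r}$ -- and is forced to absorb it, leaving the coefficient $R^2$ in front of $\tau\|\nabla\bfu_m^R\|_{L^2_x}^2$.

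For the stochastic term I would split $\A\bfu_m^R=\A\bfu_{m-1}^R+\A(\bfu_m^R-\bfu_{m-1}^R)$. The part involving $\A\bfu_{m-1}^R$ is an $\mathfrak F_{t_{m-1}}$-measurable integrand tested against the increment $\Delta_mW$, so after summation it constitutes a discrete martingale, to be controlled by the Burkholder--Davis--Gundy inequality. The part involving the increment $\A(\bfu_m^R-\bfu_{m-1}^R)$ is, after integration by parts, estimated by Young's inequality through $\tfrac14\|\nabla(\bfu_m^R-\bfu_{m-1}^R)\|_{L^2_x}^2+c\|\nabla(\Phi(\bfu_{m-1}^R)\Delta_mW)\|_{L^2_x}^2$; the first summand is absorbed into the numerical dissipation, while the conditional expectation of the second, using $\E[|\Delta_mW|^2\mid\mathfrak F_{t_{m-1}}]\sim\tau$ and \eqref{eq:phi1a}, contributes $c\tau(1+\|\bfu_{m-1}^R\|_{W^{1,2}_x}^2)$.

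Finally I would pass to the $2^q$-th moment. Multiplying the incremental inequality by the weight $\|\bfu_m^R\|_{W^{1,2}_x}^{2^q-2}$ and invoking the elementary convexity inequality $(a-b)a^{p-1}\geq\tfrac1p(a^p-b^p)$ with $p=2^{q-1}$ recovers, after summation, a telescoping structure for $\|\bfu_m^R\|_{W^{1,2}_x}^{2^q}$ together with the two weighted good terms of \eqref{lem:3.1b} (the $L^2_x$ part of the $W^{1,2}_x$-norm being already controlled by \eqref{lem:3.1aR}). Here some care is needed to keep the martingale structure when the nonlinear weight is inserted, which is achieved by weighting the stochastic integral with the $\mathfrak F_{t_{m-1}}$-measurable factor $\|\bfu_{m-1}^R\|_{W^{1,2}_x}^{2^q-2}$ and absorbing the difference. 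Taking the maximum over $m\leq n\leq M$, estimating the martingale by Burkholder--Davis--Gundy and applying a discrete Gronwall lemma then closes the estimate. The coefficient $c\tau R^2$ multiplying $\sum_m\|\nabla\bfu_m^R\|_{L^2_x}^2$ is precisely what the Gronwall step converts into the factor $e^{cR^2}$, and I expect this absorption/Gronwall mechanism -- rather than any individual bound -- to be the main obstacle, since it is responsible for the exponential (instead of polynomial) dependence on $R$ and has to be handled uniformly across all $2^q$-moments.
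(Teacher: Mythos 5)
Your proposal follows essentially the same route as the paper's proof: testing \eqref{tdiscrR} with $\Delta\bfu_m^R$, absorbing the convective term via $\|\bfu_{m-1}^R\|_{L^\infty_x}\leq cR$ on the support of $\zeta_R$ into $\delta\|\Delta\bfu_m^R\|_{L^2_x}^2+c(\delta)R^2\|\nabla\bfu_m^R\|_{L^2_x}^2$, splitting the noise term into the martingale part (tested against $\Delta\bfu_{m-1}^R$, handled by Burkholder--Davis--Gundy) and the increment part (absorbed into the numerical dissipation, with the remainder controlled by the It\^o isometry and \eqref{eq:phi1a}), and closing with the discrete Gronwall lemma, which is indeed where the factor $e^{cR^2}$ originates; the case $q\geq2$ is obtained by the same weighted iteration. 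The argument is correct and no further comparison is needed.
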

\begin{proof}
The proof is reminiscent of \cite[Lemma 2.8]{BrPr2} but differs in the estimates for the convective term and the nonlinear diffusion coefficient.
We proceed formally; a rigorous proof can be obtained using a Galerkin approximation.
We test \eqref{tdiscrR} by $\Delta\bfu_m$, obtaining
 \begin{align*}
\int_{\mt}\nabla(\bfu_{m}^R-&\bfu_{m-1}^R):\nabla\bfu_m^R\dx +\mu\tau\int_{\mt}|\Delta\bfu_m^R|^2\dx\\&=-\tau\int_{\mt}\zeta_R(\|\bfu_{m-1}^R\|_{W^{2,2}_x})(\nabla\bfu_m^R)\bfu_{m-1}^R\cdot\Delta\bfu^R_m\dx\\&+\zeta_R(\|\bfu_{m-1}^R\|_{W^{2,2}_x})\int_{\mt}\Delta\bfu_m^R\cdot\Phi(\bfu_{m-1}^R)\Delta_mW\dx.
\end{align*}
For $\delta>0$ we have by the definition of $\zeta_R$ and the embedding $W^{2,2}(\mt)\hookrightarrow L^\infty(\mt)$
\begin{align}\label{eq:1407}
\begin{aligned}
\int_{\mt}\zeta_R&(\|\bfu_{m-1}^R\|_{W^{2,2}_x})(\nabla\bfu_m^R)\bfu_{m-1}^R\cdot\Delta\bfu^R_m\dx \\&\leq\int_{\mt}\zeta_R(\|\bfu_{m-1}^R\|_{W^{2,2}_x})\|\bfu_{m-1}^R\|_{L^\infty_x}\|\nabla\bfu_m^R\|_{L^2_x}\|\Delta\bfu_m^R\|_{L^2_x}
\\&\leq\,cR\|\nabla\bfu_m\|_{L^2_x}\|\Delta\bfu_m\|_{L^2_x}
\\&\leq\,c(\delta)R^2\|\nabla\bfu_m\|_{L^2_x}^2+\delta\|\Delta\bfu_m\|_{L^2_x}^{2}.
\end{aligned}
\end{align}
Summing up and choosing $\delta$ sufficiently small shows that
 \begin{align*}
\tfrac{1}{2}\int_{\mt}|\nabla\bfu_{m}^R|^2\dx&+\tfrac{1}{2}\sum_{n=1}^m\int_{\mt}|\nabla(\bfu_{n}^R-\bfu^R_{n-1})|^2\dx +\tfrac{\mu}{2} \sum_{n=1}^m\tau\int_{\mt}|\A\bfu^R_{n}|^2\dx\\&\leq \tfrac{1}{2} \int_{\mt}|\nabla\bfu_{0}|^2\dx+cR^2 \sum_{n=1}^m\tau\|\nabla\bfu_n^R\|_{L^2_x}^2+\mathscr M^1_m+\mathscr M_m^2,
\end{align*}
where
\begin{align*}
\mathscr M^1_m&=\sum_{n=1}^m\zeta_R(\|\bfu_{n-1}^R\|_{W^{2,2}_x})\int_{\mt}\int_{t_{n-1}}^{t_n}\Phi(\bfu_{m-1}^R)\,\dd W\cdot \A\bfu^R_{n-1}\dx,\\
\mathscr M_m^2&=\sum_{n=1}^m\zeta_R(\|\bfu_{n-1}^R\|_{W^{2,2}_x})\int_{\mt}\int_{t_{n-1}}^{t_n}\Phi(\bfu_{m-1}^R)\,\dd W\cdot \A(\bfu^R_n-\bfu^R_{n-1})\dx.
\end{align*}
By the discrete Gronwall lemma we have $\p$-a.s.
 \begin{align*}
\tfrac{1}{2}\max_{1\leq m\leq M}\int_{\mt}|\nabla\bfu^R_{m}|^2\dx&+\tfrac{1}{2}\sum_{n=1}^{M}\int_{\mt}|\nabla(\bfu^R_{n}-\bfu^R_{n-1})|^2\dx +\tfrac{\mu}{2} \sum_{n=1}^{M}\tau\int_{\mt}|\A\bfu^R_{n}|^2\dx\\&\leq ce^{cR^2}\bigg( \int_{\mt}|\nabla\bfu_{0}|^2\dx+\max_{1\leq m\leq M}|\mathscr M^1_m|+\max_{1\leq m\leq M}|\mathscr M_m^2|\bigg).
\end{align*}
Since $\bfu_{m-1}$ is $\mathfrak F_{t_{m-1}}$-measurable we know that $\mathscr M^1_m$ is an $(\mathfrak F_{t_m})$-martingale. Consequently, by the Burkholder-Davis-Gundy inequality, \eqref{eq:phi0} and Young's inequality, and for $\kappa>0$
\begin{align*}
\E\bigg[\max_{1\leq m\leq M}\big|\mathscr M^1_m\big|\bigg]
&\leq\,c\,\E\bigg[\bigg(\sum_{n=1}^M\tau\zeta_R(\|\bfu_{n-1}^R\|_{W^{2,2}_x})^2\|\Phi(\bfu^R_{n-1})\|^2_{L_2(\mathfrak U,L^2_x)}\|\A\bfu^R_{n-1}\|^2_{L^2_x}\bigg)^{\frac{1}{2}}\bigg]\\ 
&\leq\,c\,\E\bigg[\bigg(\sum_{n=1}^{M}\tau\zeta_R(\|\bfu_{n-1}^R\|_{W^{2,2}_x})\big(1+\|\bfu_{n-1}^R\|_{L^2_x}^2\big)\|\A\bfu_{n-1}^R\|^2_{L^2_x}\bigg)^{\frac{1}{2}}\bigg]\\ 
&\leq\,c\,\E\bigg[\bigg(\sum_{n=1}^{M}\tau R^2\|\A\bfu_{n-1}^R\|^2_{L^2_x}\bigg)^{\frac{1}{2}}\bigg]\\ 
&\leq\,c(\delta)R^2+\,\delta\,  \E\bigg[\sum_{n=1}^{M-1}\tau\|\A\bfu_{n}^R\|_{L^2_x}^2\bigg].
\end{align*} 
Furthermore, we have
\begin{align*}
\E\bigg[\max_{1\leq m\leq M}|\mathscr M^2_{m}|\bigg]&\leq \,\delta\,\E\bigg[  \sum_{n=1}^{M}\big\|\nabla(\bfu_{n}^R-\bfu_{n-1}^R)\big\|_{L^2_x}^2\bigg]+c(\delta)\,\E\bigg[\sum_{n=1}^{M}\bigg\| \int_{t_{n-1}}^{t_n}\nabla\Phi(\bfu_{n-1}^R)\,\dd W  \bigg\|_{L^2_x}^2\bigg]\\ 
&\leq \,\delta\,\E\bigg[ \sum_{n=1}^{M}\|\nabla(\bfu_{n}^R-\bfu_{n-1}^R)\|_{L^2_x}^2 \bigg]+ c(\delta)\,\E\bigg[\tau\sum_{n=1}^M\|\Phi(\bfu_{n-1}^R)\|_{L_2(\mathfrak U;W^{1,2}_x)}^2\bigg]\\ 
&\leq \,\delta\,\E\bigg[ \sum_{n=1}^{M}\|\nabla(\bfu_{n}^R-\bfu_{n-1}^R)\|_{L^2_x}^2 \bigg]+ c(\delta)\,\E\bigg[\tau\sum_{n=1}^M\big(1+\|\bfu_{n-1}^R\|_{W^{1,2}_x}^2\big)\bigg]\\ 
&\leq \,\delta\,\E\bigg[\sum_{n=1}^{M} \|\nabla(\bfu_{n}^R-\bfu_{n-1}^R)\|_{L^2_x}^2 \bigg]+ c(\delta)
\end{align*}
due to Young's inequality, It\^{o}-isometry, \eqref{eq:phi1a} and \eqref{lem:3.1aR}. Absorbing the $\delta$-terms we conclude for $q=1$. The case $q\geq 2$ follows similarly by multiplying
with $\|\bfu_m^R\|_{W^{1,2}_x}^{2q-2}$ and iterating (see \cite[Lemma 3.1]{BCP} for details).
\end{proof}

\begin{lemma}\label{lemma:3.1B} 
Assume that $\bfu_0\in L^{2^q}(\Omega,W^{2,2}_{\Div}(\mt))$ for some $q\in\N$ and suppose that \eqref{eq:phi0}--\eqref{eq:phi2b} hold. Then the iterates $(\bfu_m^R)_{m=1}^M$ given by \eqref{tdiscrR}
satisfy the following estimates uniformly in $M$:
\begin{align}
\label{lem:3.1B}
\begin{aligned}
\E\bigg[\max_{1\leq m\leq M}\|\bfu_m^R\|^{2^q}_{W^{2,2}_x}&+\sum_{m=1}^{M}\tau\|\bfu_m^R\|_{W^{2,2}_x}^{2^q-2}\|\nabla^3\bfu_m^R\|^2_{L^2_x}\bigg]\\
&+\sum_{m=1}^{M}\|\bfu_m^R\|_{W^{2,2}_x}^{2^q-2}\|\nabla^2(\bfu_m^R-\bfu_{m-1}^R)\|^2_{L^2_x}\bigg]
\leq\,ce^{cR^{2}},
\end{aligned}
\end{align}
where $c=c(q,T,\Phi,\bfu_0)>0$ is independent of $R$.
\end{lemma}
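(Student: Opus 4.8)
The plan is to mimic the proof of Lemma \ref{lemma:3.1} one derivative higher, running in parallel with the estimate for the convective term in Lemma \ref{lem:reg}~(c). Concretely, I would test the truncated scheme \eqref{tdiscrR} with $\A^2\bfu_m^R$ (working formally, with a Galerkin layer making this rigorous). On the torus the elliptic terms are clean: $\int_{\mt}\bfu_m^R\cdot\A^2\bfu_m^R\dx=\|\A\bfu_m^R\|_{L^2_x}^2=\|\nabla^2\bfu_m^R\|_{L^2_x}^2$, so together with $\int_{\mt}\bfu_{m-1}^R\cdot\A^2\bfu_m^R\dx$ and the algebraic identity $a\cdot(a-b)=\tfrac12(|a|^2-|b|^2+|a-b|^2)$ applied to $\nabla^2$, the discrete time-difference produces both the telescoping $\tfrac12\|\nabla^2\bfu_m^R\|_{L^2_x}^2$ and the numerical-dissipation sum $\tfrac12\sum_n\|\nabla^2(\bfu_n^R-\bfu_{n-1}^R)\|_{L^2_x}^2$ appearing on the left of \eqref{lem:3.1B}. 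The viscous term yields $\mu\tau\|\nabla\A\bfu_m^R\|_{L^2_x}^2\simeq\mu\tau\|\nabla^3\bfu_m^R\|_{L^2_x}^2$, the dissipation that must absorb the top-order contributions from the convective and stochastic terms.

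The decisive step is the convective term. After integrating by parts once it takes the form $-\tau\,\zeta_R(\|\bfu_{m-1}^R\|_{W^{2,2}_x})\int_{\mt}\nabla\big[(\bfu_{m-1}^R\cdot\nabla)\bfu_m^R\big]:\nabla\A\bfu_m^R\dx$, and I would expand the gradient by the product rule into a piece carrying $\bfu_{m-1}^R$ (with two derivatives on $\bfu_m^R$) and a piece carrying $\nabla\bfu_{m-1}^R$ (with one derivative on $\bfu_m^R$). Exactly as in Lemma \ref{lem:reg}~(c), I would use $W^{2,2}(\mt)\hookrightarrow L^\infty(\mt)\cap W^{1,4}(\mt)$. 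The essential new feature, compared with the continuous case, is that the cut-off acts on $\bfu_{m-1}^R$ while the derivatives fall on $\bfu_m^R$: hence $\zeta_R$ controls $\|\bfu_{m-1}^R\|_{L^\infty_x}$ and $\|\nabla\bfu_{m-1}^R\|_{L^4_x}$ by $cR$, while the current-step norms $\|\nabla^2\bfu_m^R\|_{L^2_x}$, $\|\nabla\bfu_m^R\|_{L^4_x}$, $\|\nabla\A\bfu_m^R\|_{L^2_x}$ stay free. Bounding everything by $cR\big(\|\nabla^2\bfu_m^R\|_{L^2_x}+\|\nabla\bfu_m^R\|_{L^4_x}\big)\|\nabla\A\bfu_m^R\|_{L^2_x}$, then using Gagliardo--Nirenberg for $\|\nabla\bfu_m^R\|_{L^4_x}$ and Young's inequality, the convective term is dominated by $\delta\tau\|\nabla\A\bfu_m^R\|_{L^2_x}^2+c(\delta)R^2\tau\big(\|\nabla^2\bfu_m^R\|_{L^2_x}^2+\|\nabla\bfu_m^R\|_{L^2_x}^2\big)$; the first term is absorbed by the viscous dissipation and the last two feed the Gronwall step, the $W^{1,2}$-piece being already controlled by Lemma \ref{lemma:3.1}.

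For the two stochastic contributions I would follow the pattern of $\mathscr M^1_m,\mathscr M^2_m$ in Lemma \ref{lemma:3.1} together with the Itô-correction terms of Lemma \ref{lem:reg}~(c), now at the level of $\nabla^2$. The martingale term is an $(\mathfrak F_{t_m})$-martingale since $\bfu_{m-1}^R$ is $\mathfrak F_{t_{m-1}}$-measurable, so its maximum over $m$ is handled by Burkholder--Davis--Gundy; the Itô-correction term and the increment term $\int_{t_{n-1}}^{t_n}\nabla^2\Phi(\bfu_{n-1}^R)\,\dd W$ are handled by the Itô isometry. Here the second-order hypotheses \eqref{eq:phi2a}--\eqref{eq:phi2b} enter: on the support of $\zeta_R$ they bound the $\|\A\Phi(\bfu_{n-1}^R)e_k\|_{L^2_x}$-type quantities through $1+\|\bfu_{n-1}^R\|_{W^{1,4}_x}^4+\|\bfu_{n-1}^R\|_{W^{2,2}_x}^2\le cR^4$, so all stochastic remainders are controlled by $\delta$-multiples of the dissipation and numerical-dissipation sums plus $R$-dependent constants.

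Having collected everything, I would apply the discrete Gronwall lemma pathwise (producing the factor $e^{cR^2}$), take expectations, absorb the $\delta$-terms, and close the case $q=1$; the moments $q\ge2$ follow by multiplying with $\|\bfu_m^R\|_{W^{2,2}_x}^{2^q-2}$ and iterating as in \cite[Lemma 3.1]{BCP}. I expect the main obstacle to be precisely the convective term at top order, and in particular keeping the asymmetry $\zeta_R(\|\bfu_{m-1}^R\|_{W^{2,2}_x})$ versus the derivatives on $\bfu_m^R$ under control while extracting exactly the Gronwall-compatible structure $\delta\,(\text{dissipation})+c(\delta)R^2\,(\text{lower order})$.
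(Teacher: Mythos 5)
Your proposal follows essentially the same route as the paper's proof: testing \eqref{tdiscrR} with $\Delta^2\bfu_m^R$, splitting the convective term by the product rule into a $\bfu_{m-1}^R\,\nabla^2\bfu_m^R$ piece and a $\nabla\bfu_{m-1}^R\,\nabla\bfu_m^R$ piece controlled via $W^{2,2}(\mt)\hookrightarrow L^\infty(\mt)\cap W^{1,4}(\mt)$ and the cut-off, absorbing into the dissipation $\mu\tau\|\nabla\Delta\bfu_m^R\|_{L^2_x}^2$, treating the stochastic terms by Burkholder--Davis--Gundy and It\^o isometry with \eqref{eq:phi2a}--\eqref{eq:phi2b}, and closing with the discrete Gronwall lemma and iteration for $q\geq 2$. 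The argument is correct and matches the paper's proof in all essential steps.
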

\begin{proof}
We argue similarly to the proof of Lemma \ref{lemma:3.1} testing this time \eqref{tdiscrR}
 by $\Delta^2\bfu_m$ obtaining 
 \begin{align*}
\int_{\mt}\Delta(\bfu_{m}^R-&\bfu_{m-1}^R):\Delta\bfu_m^R\dx +\mu\tau\int_{\mt}|\Delta\nabla\bfu_m^R|^2\dx\\&=-\tau\int_{\mt}\zeta_R(\|\bfu_{m-1}^R\|_{W^{2,2}_x})(\nabla\bfu_m^R)\bfu_{m-1}^R\cdot\Delta^2\bfu^R_m\dx\\&+\zeta_R(\|\bfu_{m-1}^R\|_{W^{2,2}_x})\int_{\mt} \Delta^2\bfu_m^R\cdot\Phi(\bfu_{m-1}^R)\Delta_mW\dx.
\end{align*}
For $\delta>0$ we have by definition of $\zeta_R$ and the embeddings 
$W^{2,2}(\mt)\hookrightarrow L^\infty(\mt)$, $W^{2,2}(\mt)\hookrightarrow W^{1,4}(\mt)$
\begin{align*}
-\int_{\mt}\zeta_R&(\|\bfu_{m-1}^R\|_{W^{2,2}_x})(\nabla\bfu_m^R)\bfu_{m-1}^R\cdot\Delta^2\bfu^R_m\dx\\&=\int_{\mt}\zeta_R(\|\bfu_{m-1}^R\|_{W^{2,2}_x})(\nabla\bfu_m^R)\nabla\bfu_{m-1}^R:\Delta\nabla\bfu^R_m\dx\\
&+\int_{\mt}\zeta_R(\|\bfu_{m-1}^R\|_{W^{2,2}_x})(\nabla^2\bfu_m^R)\bfu_{m-1}^R:\Delta\nabla\bfu^R_m\dx\\
&\leq\zeta_R(\|\bfu_{m-1}^R\|_{W^{2,2}_x})\|\nabla\bfu_{m-1}^R\|_{L^4_x}\|\nabla\bfu_m^R\|_{L^4_x}\|\Delta\nabla\bfu_m^R\|_{L^2_x}\\
&+\zeta_R(\|\bfu_{m-1}^R\|_{W^{2,2}_x})\|\bfu_{m-1}^R\|_{L^\infty_x}\|\nabla^2\bfu_m^R\|_{L^2_x}\|\Delta\nabla\bfu_m^R\|_{L^2_x}
\\
&\leq\,cR\|\nabla^2\bfu_m\|_{L^2_x}\|\Delta\nabla\bfu_m\|_{L^2_x}
\\&\leq\,c(\delta)R^2\|\nabla^2\bfu_m\|_{L^2_x}^2+\delta\|\Delta\nabla\bfu_m\|_{L^2_x}^{2}.
\end{align*}
Summing up, choosing $\delta$ sufficiently small and using continuity of $\nabla^2\Delta^{-1}$ on $L^2(\mt)$ shows
 \begin{align*}
\tfrac{1}{2}\int_{\mt}|\nabla^2\bfu_{m}^R|^2\dx&+\tfrac{1}{2}\sum_{n=1}^m\int_{\mt}|\nabla^2(\bfu_{n}^R-\bfu^R_{n-1})|^2\dx +\tfrac{\mu}{2} \sum_{n=1}^m\tau\int_{\mt}|\nabla^3\bfu^R_{n}|^2\dx\\&\leq \tfrac{1}{2} \int_{\mt}|\nabla^2\bfu_{0}|^2\dx+cR^2 \sum_{n=1}^m\tau\|\nabla^2\bfu_n^R\|_{L^2_x}^2+\widetilde{\mathscr M}^1_m+\widetilde{\mathscr M}_m^2,
\end{align*}
where
\begin{align*}
\widetilde{\mathscr M}^1_m&=-\sum_{n=1}^m\zeta_R(\|\bfu_{n-1}^R\|_{W^{2,2}_x})\int_{\mt}\int_{t_{n-1}}^{t_n}\nabla\{\Phi(\bfu_{m-1}^R)\}\,\dd W\cdot \A\nabla\bfu^R_{n-1}\dx,\\
\widetilde{\mathscr M}_m^2&=\sum_{n=1}^m\zeta_R(\|\bfu_{n-1}^R\|_{W^{2,2}_x})\int_{\mt}\int_{t_{n-1}}^{t_n}\Delta\{\Phi(\bfu_{m-1}^R)\}\,\dd W\cdot \A(\bfu^R_n-\bfu^R_{n-1})\dx.
\end{align*}
By the discrete Gronwall lemma we have $\p$-a.s.
 \begin{align*}
\tfrac{1}{2}\max_{1\leq m\leq M}\int_{\mt}|\nabla^2\bfu^R_{m}|^2\dx&+\tfrac{1}{2}\sum_{n=1}^{M}\int_{\mt}|\nabla^2(\bfu^R_{n}-\bfu^R_{n-1})|^2\dx +\tfrac{\mu}{2} \sum_{n=1}^{M}\tau\int_{\mt}|\nabla^3\bfu^R_{n}|^2\dx\\&\leq ce^{cR^2}\bigg( \int_{\mt}|\nabla^2\bfu_{0}|^2\dx+\max_{1\leq m\leq M}|\widetilde{\mathscr M}^1_m|+\max_{1\leq m\leq M}|\widetilde{\mathscr M}_m^2|\bigg).
\end{align*}
We obtain further by Burkholder-Davis-Gundy inequality, \eqref{eq:phi1a} and Young's inequality, and for $\delta>0$
\begin{align*}
\E\bigg[\max_{1\leq m\leq M}\big|\widetilde{\mathscr M}^1_m\big|\bigg]
&\leq\,c\,\E\bigg[\bigg(\sum_{n=1}^M\tau\zeta_R(\|\bfu_{m-1}^R\|_{W^{2,2}_x})^2\|\Phi(\bfu^R_{m-1})\|^2_{L_2(\mathfrak U,W^{1,2}_x)}\|\A\nabla\bfu^R_{n-1}\|^2_{L^2_x}\ds\bigg)^{\frac{1}{2}}\bigg]\\ 
&\leq\,c\,\E\bigg[\bigg(\sum_{n=1}^{M}\tau\zeta_R(\|\bfu_{m-1}^R\|_{W^{2,2}_x})\big(1+\|\bfu_{m-1}^R\|_{W^{1,2}_x}^2\big)\|\A\nabla\bfu_{n-1}^R\|^2_{L^2_x}\bigg)^{\frac{1}{2}}\bigg]\\ 
&\leq\,c\,\E\bigg[\bigg(\sum_{n=1}^{M}\tau R^2\|\A\nabla\bfu_{n-1}^R\|^2_{L^2_x}\bigg)^{\frac{1}{2}}\bigg]\\ 
&\leq\,c(\delta)R^2+\,\delta \E\bigg[\sum_{n=1}^{M-1}\tau\|\A\nabla\bfu_{n}^R\|_{L^2_x}^2\bigg].
\end{align*} 
Furthermore, we have
\begin{align*}
\E\bigg[\max_{1\leq m\leq M}|\widetilde{\mathscr M}^2_{m}|\bigg]&\leq \,\delta\,\E\bigg[  \sum_{n=1}^{M}\big\|\nabla^2(\bfu_{n}^R-\bfu_{n-1}^R)\big\|_{L^2_x}^2\bigg]\\&+c(\delta)\,\E\bigg[\sum_{n=1}^{M}\zeta_R(\|\bfu_{m-1}^R\|_{W^{2,2}_x})^2\bigg\| \int_{t_{n-1}}^{t_n}\Delta\Phi(\bfu_{n-1}^R)\,\dd W  \bigg\|_{L^2_x}^2\bigg]\\ \nonumber
&\leq \,\delta\,\E\bigg[ \sum_{n=1}^{M}\|\nabla^2(\bfu_{n}^R-\bfu_{n-1}^R)\|_{L^2_x}^2 \bigg]\\&+ c(\delta)\,\E\bigg[\tau\sum_{n=1}^M\zeta_R(\|\bfu_{m-1}^R\|_{W^{2,2}_x})\|\Phi(\bfu_{n-1}^R)\|_{L_2(\mathfrak U;W^{2,2}_x)}^2\bigg]\\ 
&\leq \,\delta\,\E\bigg[ \sum_{n=1}^{M}\|\nabla^2(\bfu_{n}^R-\bfu_{n-1}^R)\|_{L^2_x}^2 \bigg]\\&+ c(\delta)\,\E\bigg[\tau\sum_{n=1}^M\zeta_R(\|\bfu_{m-1}^R\|_{W^{2,2}_x})\big(1+\|\bfu_{n-1}^R\|_{W^{1,4}_x}^4+\|\bfu_{n-1}^R\|_{W^{2,2}_x}^2\big)\bigg]\\ 
&\leq \,\delta\,\E\bigg[\sum_{n=1}^{M} \|\nabla^2(\bfu_{n}^R-\bfu_{n-1}^R)\|_{L^2_x}^2 \bigg]+ c(\delta) R^2\E\bigg[\tau\sum_{n=1}^{M} \|\nabla^2\bfu_{n-1}^R\|_{L^2_x}^2 \bigg]
\end{align*}
due to Young's inequality, It\^{o}-isometry and \eqref{eq:phi2a} (together with the embedding $W^{2,2}(\mt)\hookrightarrow W^{1,4}(\mt)$). Absorbing the $\delta$-terms and applying Lemma \ref{lemma:3.1} yields the claim for $q=1$, whereas the general case follows again by iteration.
\end{proof}
 
 For $R>0$, we define the (discrete) $(\mathfrak{F}_{t_{m}})$-stopping time
\begin{align} \label{eq:jR} 
{\mathfrak s}_{R}^{\tt d} &:= \min_{0 \leq m \leq M} \biggl\{ t_m:\ \max_{0\leq n\leq m} \Vert  {\bf u}_{n}\Vert_{W^{2,2}_x} \geq R\biggr\},
\end{align}
where we set ${\mathfrak s}_{R}^{\tt d}=t_M$ if the set above is empty.
 Note that ${\mathfrak s}_{R}^{\tt d} \in \{t_m\}_{m=0}^M$, 
 with random index 
 ${\mathfrak j}_{R} \in {\mathbb N}_0 \cap [0,M]$, such that ${\mathfrak s}_{R}^{\tt d} = t_{{\mathfrak j}_{R}}$. The crucial point is now to show a counterpart of the strict positivity
 of the stopping time from the continuous solution, cf. Definition
 \ref{def:strsol}. This is the content of the next lemma, which states that ${\mathfrak s}_{R}^{\tt d}\geq \tau$ with high probability.
 \begin{lemma}\label{lem:sgeqtau}
 Suppose that the assumptions from Lemma \ref{lemma:3.1B}
 (with $q=1$) hold and let $R=R(\tau)$ be chosen such that
 $\tau e^{cR^2}\rightarrow0$ as $\tau\rightarrow0$. Then we have for any $\ell\in\N$
 \begin{align}\label{eq:sgeqtau}
 \lim_{\tau\rightarrow0}\p\big([{\mathfrak s}_{R}^{\tt d}\leq\ell\tau]\big)=0.
 \end{align}
 \end{lemma}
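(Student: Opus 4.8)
The plan is to transfer everything to the truncated iterates $(\bfu_m^R)$ from \eqref{tdiscrR} and then to prove a \emph{short-time} a priori bound that is uniform in $R$. First I would note that the untruncated scheme \eqref{tdiscr} and the truncated scheme \eqref{tdiscrR} coincide as long as the norms stay below the threshold: if $\|\bfu_k\|_{W^{2,2}_x}<R$ for $k\le n-1$, then $\bfu_k^R=\bfu_k$ for $k\le n-1$ by induction, so $\zeta_R(\|\bfu_{n-1}^R\|_{W^{2,2}_x})=1$ and uniqueness of the linear problem forces $\bfu_n^R=\bfu_n$. Writing $\mathfrak j_R$ for the index with $\mathfrak s_R^{\tt d}=t_{\mathfrak j_R}$, on $[\mathfrak s_R^{\tt d}\le\ell\tau]$ we have $\mathfrak j_R\le\ell$, and at the first level where $\|\bfu_{\mathfrak j_R}\|_{W^{2,2}_x}\ge R$ the two solutions still agree; hence
\[
[\mathfrak s_R^{\tt d}\le\ell\tau]\subseteq\Big\{\max_{0\le m\le\ell}\|\bfu_m^R\|_{W^{2,2}_x}\ge R\Big\},
\]
and it suffices to estimate the probability on the right.

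The naive route via the global estimate \eqref{lem:3.1B} fails, since it carries the factor $e^{cR^2}$ and Markov's inequality at level $R^2$ then only yields $ce^{cR^2}/R^2\to\infty$. Over merely $\ell$ steps, however, one can avoid the exponential altogether. I would repeat the energy computation from the proof of Lemma \ref{lemma:3.1B} — testing \eqref{tdiscrR} with $\Delta^2\bfu_m^R$ — but take the expectation at a \emph{fixed} level $m\le\ell$ rather than estimating the running maximum. The decisive point is that the genuine stochastic term $\widetilde{\mathscr M}^1_m$ is a discrete $(\mathfrak F_{t_m})$-martingale, so $\E[\widetilde{\mathscr M}^1_m]=0$ and it drops out; this is exactly what lets me avoid the Burkholder–Davis–Gundy-plus-Young step, which would reintroduce an irreducible $R^2$ through the dissipation $\|\nabla^3\bfu^R\|_{L^2_x}^2$. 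The correction term $\widetilde{\mathscr M}^2_m$ is controlled by It\^o's isometry, \eqref{eq:phi2a} and the support property $\zeta_R\neq0\Rightarrow\|\bfu_{n-1}^R\|_{W^{2,2}_x}\le 2R$, which bounds $\zeta_R\|\Delta\Phi(\bfu_{n-1}^R)\|_{L_2}^2\le cR^4$ and hence contributes $c\,\ell\tau R^4$, while the convective term produces the Gronwall coefficient $cR^2$. Setting $y_m:=\E[\|\nabla^2\bfu_m^R\|_{L^2_x}^2]$ I obtain
\[
y_m\le y_0+c\,\ell\tau R^4+cR^2\sum_{n=1}^m\tau\,y_n,\qquad m\le\ell.
\]

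The discrete Gronwall lemma applied only up to step $\ell$ produces the factor $e^{cR^2\ell\tau}$, which tends to $1$ because $\tau e^{cR^2}\to0$ forces $R^2\le c^{-1}\log(1/\tau)$ and hence $\tau R^2\to0$ and $\tau R^4\to0$. Together with $y_0=\E[\|\nabla^2\bfu_0\|_{L^2_x}^2]<\infty$ this gives $\sup_{m\le\ell}y_m\le C$ with $C$ independent of $R$ for all sufficiently small $\tau$. Combining with the $R$-independent bound \eqref{lem:3.1aR} for $\sup_m\E[\|\bfu_m^R\|_{L^2_x}^2]$ and the interpolation $\|\bfv\|_{W^{2,2}_x}^2\le c(\|\bfv\|_{L^2_x}^2+\|\nabla^2\bfv\|_{L^2_x}^2)$ yields $\sup_{m\le\ell}\E[\|\bfu_m^R\|_{W^{2,2}_x}^2]\le C$, uniformly in $R$. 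Since $\ell$ is fixed, a union bound over the $\ell+1$ levels combined with Markov's inequality then gives
\[
\p\Big(\max_{0\le m\le\ell}\|\bfu_m^R\|_{W^{2,2}_x}\ge R\Big)\le\sum_{m=0}^\ell\frac{\E[\|\bfu_m^R\|_{W^{2,2}_x}^2]}{R^2}\le\frac{(\ell+1)C}{R^2},
\]
which tends to $0$ as $\tau\to0$, using that $R=R(\tau)\to\infty$ (the regime of interest; note that for a fixed truncation level the claim would fail because $\bfu_0$ is unbounded). The main obstacle is precisely the treatment of the stochastic term: one must resist estimating $\E[\max_m|\widetilde{\mathscr M}^1_m|]$, which couples to the dissipation and closes only at scale $R^2$ — thereby defeating Markov — and instead exploit that at a fixed level the martingale has mean zero, deferring the passage to the maximum to the final, lossless union bound.
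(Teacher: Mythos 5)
Your proposal is correct, and it diverges from the paper's proof at the one step that matters. The paper also reduces to the truncated iterates and reruns the energy computation of Lemma \ref{lemma:3.1B} over the first $\ell$ steps, but it keeps the estimate at the level of $\E[\max_{m\le\ell}(\cdot)]$: the martingale $\widetilde{\mathscr M}^1_m$ is handled by Burkholder--Davis--Gundy (costing $c(\delta)\tau\ell R^2$ plus an absorbable dissipation term), and the Gronwall-coefficient term $cR^2\sum_{m\le\ell}\tau\|\nabla^2\bfu^R_m\|_{L^2_x}^2$ is \emph{not} iterated at all -- its expectation is simply bounded by $c\,\ell\tau e^{cR^2}$ by invoking the global estimate \eqref{lem:3.1B}, and the hypothesis $\tau e^{cR^2}\to0$ is then used precisely to kill this term. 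The resulting bound is $\E[\max_{m\le\ell}\|\bfu^R_m\|^2_{W^{2,2}_x}]\le \E[\|\nabla^2\bfu_0\|^2_{L^2_x}]+c\ell\tau e^{cR^2}$, followed by the same Markov step as yours. Your route instead takes expectations at a fixed level $m$ so that $\E[\widetilde{\mathscr M}^1_m]=0$, runs a discrete Gronwall over only $\ell$ steps to get the harmless factor $e^{cR^2\ell\tau}\to1$, and arrives at a bound that is uniform in $R$; you then pass to the maximum by a union bound over $\ell+1$ levels. What this buys is independence from the exponential estimate of Lemma \ref{lemma:3.1B}: you use the hypothesis $\tau e^{cR^2}\to0$ only through its weaker consequences $\tau R^4\to0$ and $R\to\infty$, whereas the paper's argument genuinely needs $\tau e^{cR^2}\to0$ to tame the globally estimated coefficient term. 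What the paper's version buys is brevity -- it recycles the max-level machinery verbatim and needs no fixed-level/union-bound detour. Your side remarks are also accurate and worth keeping: the containment $[\mathfrak s_R^{\tt d}\le\ell\tau]\subseteq[\max_{m\le\ell}\|\bfu^R_m\|_{W^{2,2}_x}\ge R]$ does require the inductive identification $\bfu_m=\bfu^R_m$ up to the first exceedance (the paper asserts this without comment), and both proofs tacitly need $R(\tau)\to\infty$ for the initial-datum term $R^{-2}\E[\|\nabla^2\bfu_0\|^2_{L^2_x}]$ to vanish, a point the paper's closing sentence glosses over.
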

 \begin{proof}
 Arguing as in the proof of Lemma \ref{lemma:3.1B}
 we have
  \begin{align*}
\tfrac{1}{2}\max_{1\leq m\leq \ell}\int_{\mt}|\nabla^2\bfu^R_{m}|^2\dx&+\tfrac{1}{2}\sum_{n=1}^{\ell}\int_{\mt}|\nabla^2(\bfu^R_{n}-\bfu^R_{n-1})|^2\dx +\tfrac{\mu}{2} \sum_{n=1}^{\ell}\tau\int_{\mt}|\nabla^3\bfu^R_{n}|^2\dx\\&\leq \tfrac{1}{2} \int_{\mt}|\nabla^2\bfu_{0}|^2\dx+\max_{1\leq m\leq \ell}|\widetilde{\mathscr M}^1_m|+\max_{1\leq m\leq \ell}|\widetilde{\mathscr M}_m^2|\\
&+cR^2 \sum_{m=1}^\ell\tau\int_{\mt}|\nabla^2\bfu_m^R|^2\dx.
\end{align*}
where the expectation of the last term can be estimated by
$c\ell\tau e^{cR^2}$
 as a consequence of Lemma \ref{lemma:3.1B}.
For the stochastic terms we have again
\begin{align*}
\E\bigg[\max_{1\leq m\leq \ell}\big|\widetilde{\mathscr M}^1_m\big|\bigg]
&\leq\,c\,\E\bigg[\bigg(\sum_{n=1}^{\ell}\tau R^2\|\A\nabla\bfu_{n-1}^R\|^2_{L^2_x}\bigg)^{\frac{1}{2}}\bigg]\\ 
&\leq\,c(\delta)\tau \ell R^2+\,\delta  \E\bigg[\sum_{n=1}^{\ell-1}\tau\|\A\nabla\bfu_{n}^R\|_{L^2_x}^2\bigg],
\end{align*} 
\begin{align*}
\E\bigg[\max_{1\leq m\leq \ell}|\widetilde{\mathscr M}^2_{m}|\bigg]
&\leq \,\delta\,\E\bigg[ \sum_{m=1}^{\ell}\|\nabla^2(\bfu_{m}^R-\bfu_{m-1}^R)\|_{L^2_x}^2 \bigg]\\&+ c(\delta)\,\E\bigg[\tau\sum_{n=1}^\ell\zeta_R(\|\bfu_{m-1}^R\|_{W^{2,2}_x})\big(1+\|\bfu_{n-1}^R\|_{W^{1,4}_x}^4+\|\bfu_{n-1}^R\|_{W^{2,2}_x}^2\big)\bigg]\\ 
&\leq \,\delta\,\E\bigg[\sum_{n=1}^{\ell} \|\nabla^2(\bfu_{n}^R-\bfu_{n-1}^R)\|_{L^2_x}^2 \bigg]+ c(\delta)\tau\ell R^2.
\end{align*}
Absorbing the $\delta$-terms we conclude 
\begin{align*}
\E\bigg[\max_{1\leq m\leq\ell}\|\bfu_m^R\|_{W^{2,2}_x}^2\bigg]\leq\, \E\bigg[\int_{\mt}|\nabla^2\bfu_{0}|^2\dx\bigg]
+c\tau\ell e^{ cR^2}
\end{align*}
such that
 \begin{align*}
\p\big([{\mathfrak s}_{R}^{\tt d}\leq \ell\tau]\big)&=\p\bigg[\max_{1\leq m\leq\ell}\|\bfu_m\|_{W^{2,2}_x}^2\geq R^2\bigg]\\
&\leq\p\bigg[\max_{1\leq m\leq\ell}\|\bfu_m^R\|_{W^{2,2}_x}^2\geq R^2\bigg]\\
&\leq\, \frac{1}{R^2}\E\bigg[\int_{\mt}|\nabla^2\bfu_{0}|^2\dx\bigg]
+c\tau\ell e^{ cR^2}
 \end{align*}
 by Markov's inequality. By assumption the right-hand sides vanishes as $\tau\rightarrow0$.
 \end{proof}

With relation \eqref{eq:sgeqtau} at hand it is now meaningful to transfer the estimates for $(\bfu_m^R)_{m=1}^M$ from Lemma  \ref{lemma:3.1a} and  \ref{lemma:3.1B} to 
$(\bfu_m^R)_{m=1}^M$. Noticing that $\bfu_m=\bfu_m^R$
in $[{\mathfrak s}_{R}^{\tt d}\geq t_m]$ we obtain the following corollary.
\begin{corollary}\label{cor:3.1} 
Assume that $q\in\N$ and that \eqref{eq:phi0} holds. Then the iterates $(\bfu_m)_{m=1}^M$ given by \eqref{tdiscr}
satisfy the following estimates uniformly in $M$:
\begin{enumerate}
\item Suppose that $\bfu_0\in L^{2^q}(\Omega,W^{1,2}_{\Div}(\mt))$ and that additionally \eqref{eq:phi1a} and \eqref{eq:phi1b} hold. Then we have
\begin{align*}
\E\bigg[\max_{1\leq m\leq {\mathfrak j}_{R} }\|\bfu_m\|^{2^q}_{W^{1,2}_x}&+\sum_{m=1}^{{\mathfrak j}_{R} }\tau\|\bfu_m\|_{W^{1,2}_x}^{2^q-2}\|\nabla^2\bfu_m\|^2_{L^2_x}\bigg]\\
&+\sum_{m=1}^{\mathfrak j_R}\|\bfu_m^R\|_{W^{2,2}_x}^{2^q-2}\|\nabla(\bfu_m-\bfu_{m-1})\|^2_{L^2_x}\bigg]\leq\,ce^{cR^{2}}.
\end{align*}
\item Suppose that $\bfu_0\in L^{2^q}(\Omega,W^{2,2}_{\Div}(\mt))$  and that additionally \eqref{eq:phi1a}--\eqref{eq:phi2b} hold. Then we have
\begin{align*}
\E\bigg[\max_{1\leq m\leq {\mathfrak j}_{R} }\|\bfu_m\|^{2^q}_{W^{2,2}_x}&+\sum_{m=1}^{{\mathfrak j}_{R} }\tau\|\bfu_m\|_{W^{2,2}_x}^{2^q-2}\|\nabla^3\bfu_m\|^2_{L^2_x}\bigg]\\
&+\sum_{m=1}^{\mathfrak j_R}\|\bfu_m^R\|_{W^{2,2}_x}^{2^q-2}\|\nabla^2(\bfu_m-\bfu_{m-1})\|^2_{L^2_x}\bigg]\leq\,ce^{cR^{2}}.
\end{align*}
\end{enumerate}
Here $c=c(q,T,\Phi,\bfu_0)>0$ is independent of $R$.
\end{corollary}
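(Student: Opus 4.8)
The plan is to exploit the pathwise coincidence of the truncated iterates $(\bfu_m^R)$ with the genuine iterates $(\bfu_m)$ on the event where the stopping time has not yet been triggered, and then to dominate each quantity on the left-hand side—now truncated at the random index $\mathfrak{j}_R$—by the corresponding full-range quantity for $(\bfu_m^R)$, to which Lemmas \ref{lemma:3.1} and \ref{lemma:3.1B} apply.

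First I would establish, by induction on $m$, that $\bfu_n=\bfu_n^R$ for all $0\leq n\leq m$ on the event $[\mathfrak{s}_R^{\tt d}\geq t_m]=[\mathfrak{j}_R\geq m]$. The base case is immediate since $\bfu_0^R=\bfu_0$. For the inductive step, observe that on $[\mathfrak{j}_R\geq m]$ one has $\|\bfu_n\|_{W^{2,2}_x}<R$ for every $n\leq m-1$ by the definition \eqref{eq:jR}; combined with the induction hypothesis $\bfu_{m-1}=\bfu_{m-1}^R$ this gives $\zeta_R(\|\bfu_{m-1}^R\|_{W^{2,2}_x})=1$, so that the truncated scheme \eqref{tdiscrR} reduces exactly to the untruncated scheme \eqref{tdiscr}. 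As both are then the same linear, uniquely solvable problem for the unknown at time $t_m$ with identical data $\bfu_{m-1}$ and $\Delta_m W$, we conclude $\bfu_m=\bfu_m^R$ on $[\mathfrak{j}_R\geq m]$. Note that this coincidence holds up to and including the index $m=\mathfrak{j}_R$, which is precisely the range occurring in the maxima and sums of the corollary, even though $\|\bfu_{\mathfrak{j}_R}\|_{W^{2,2}_x}$ itself may reach $R$.

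Next I would convert this identity into a pointwise (a.s.) domination. Decomposing $\Omega$ over the disjoint events $[\mathfrak{j}_R=k]$, $0\leq k\leq M$, and using $[\mathfrak{j}_R=k]\subseteq[\mathfrak{j}_R\geq k]$ so that $\bfu_m=\bfu_m^R$ for all $m\leq k$ there, each term such as $\max_{1\leq m\leq\mathfrak{j}_R}\|\bfu_m\|_{W^{2,2}_x}^{2^q}$ equals $\max_{1\leq m\leq k}\|\bfu_m^R\|_{W^{2,2}_x}^{2^q}$ on $[\mathfrak{j}_R=k]$ and is thus bounded by $\max_{1\leq m\leq M}\|\bfu_m^R\|_{W^{2,2}_x}^{2^q}$. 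The same comparison applies verbatim to the weighted sums $\sum_{m=1}^{\mathfrak{j}_R}\tau\|\bfu_m\|^{2^q-2}_{W^{2,2}_x}\|\nabla^3\bfu_m\|^2_{L^2_x}$ and to the discrete-derivative term, since on each $[\mathfrak{j}_R=k]$ every iterate up to index $k$—and hence both its $W^{2,2}_x$-weight and its increment $\bfu_m-\bfu_{m-1}$—coincides with its truncated counterpart. Adding these pointwise bounds and taking expectations, the left-hand side of the asserted estimate is controlled by the left-hand side of \eqref{lem:3.1B} (respectively \eqref{lem:3.1b} for part (a)), which is $\leq ce^{cR^2}$. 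Since the domination is pointwise and introduces no new factor, the constant $c=c(q,T,\Phi,\bfu_0)$ is inherited unchanged, in particular independent of $R$. Part (a) follows identically from Lemma \ref{lemma:3.1}, replacing $W^{2,2}_x$-norms by $W^{1,2}_x$-norms and third by second derivatives.

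The argument is structural rather than analytic, so I do not anticipate a genuine obstacle. The only points requiring care are the consistency of the truncation threshold with the definition \eqref{eq:jR}—namely that $\mathfrak{j}_R\geq m$ forces $\|\bfu_{m-1}\|_{W^{2,2}_x}<R$ and hence $\zeta_R=1$ at the step producing $\bfu_m$—and the measurability bookkeeping needed to run the induction and to justify the partition $\Omega=\bigcup_{k}[\mathfrak{j}_R=k]$ simultaneously across sample points.
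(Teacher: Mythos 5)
Your proposal is correct and follows essentially the same route as the paper, whose entire argument for this corollary is the remark that $\bfu_m=\bfu_m^R$ on $[\mathfrak s_R^{\mathtt d}\geq t_m]$ together with an appeal to Lemmas \ref{lemma:3.1} and \ref{lemma:3.1B}. Your induction establishing the coincidence up to index $\mathfrak j_R$ and the pointwise domination over the partition $[\mathfrak j_R=k]$ simply make explicit what the paper leaves implicit.
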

\begin{remark}
In the estimates from Lemmas \ref{lemma:3.1} and \ref{lemma:3.1B} it is also possible to control higher moments provided the corresponding moments are bounded for the initial datum. This transfers also to Corollary \ref{cor:3.1} and, in particular, implies that
\begin{align}\label{eq:1007}
\begin{aligned}
\E\bigg[\sum_{m=1}^{\mathfrak j_R}\|\nabla(\bfu_m-\bfu_{m-1})\|^2_{L^2_x}\bigg]^q\leq\,ce^{cR^{2}},
\end{aligned}
\end{align}
provided we have $\bfu_0\in L^{2^q}(\Omega,W^{1,2}_{\Div}(\mt))$.
\end{remark}

Now we are going to introduce the pressure function for \eqref{tdiscr}.
 For $\bfphi\in W^{1,2}(\mt)$ we can insert $\bfphi-\nabla\Delta^{-1}\Div\bfphi\in W^{1,2}_{\Div}(\mt)$ in \eqref{tdiscr}
and obtain
\begin{align}
\nonumber
\int_{\mt}\bfu_m\cdot\bfvarphi\dx &-\tau\bigg(\int_{\mt}\bfu_m\otimes\bfu_{m-1}:\nabla\bfphi\dx+\mu\int_{\mt}\nabla\bfu_{m}:\nabla\bfphi\dx\bigg)\\
\label{eq:pressuret}&=\int_{\mt}\bfu_{m-1}\cdot\bfvarphi\dx
+\Delta t\int_{\mt}\pi_m^{\mathrm{det}}\,\Div\bfphi\dx
\\
\nonumber&+\int_{\mt}\Phi(\bfu_{m-1})\,\Delta_m W\cdot \bfvarphi\dx+\int_{\mt}\int_0^t\Phi^\pi_{m-1}\,\Delta_m W\cdot \bfvarphi\dx,
\end{align}
where 
\begin{align*}
\pi_m^{\mathrm{det}}&=-\Delta^{-1}\Div\Div\big(\bfu_m\otimes\bfu_{m-1}\big),\\
\Phi_{m-1}^\pi&=-\nabla\Delta^{-1}\Div\Phi(\bfu_{m-1}).
\end{align*}
Similar to \cite[Lemma 3 \& 4]{BrDo} we give some estimates for $\pi_m^{\mathrm{det}}$ and $\Phi_{m-1}^\pi$.
\begin{lemma}\label{lemma:3.2}
Assume that $\bfu_0\in L^{4}(\Omega,W^{1,2}_{div}(\mt))$ and that $\Phi$ satisfies \eqref{eq:phi0}--\eqref{eq:phi1b}.
For all $m\in\{1,...,\mathfrak j_R\}$ the random variable $\pi_m^{\mathrm{det}}$ is $\mathfrak F_{t_m}$-measurable, has values in $W^{1,2}(\mt)$ and we have uniformly in $\tau$
\begin{align*}
\E\bigg[\tau \sum_{m=1}^{\mathfrak j_R}\big\|\nabla \pi_m^{\mathrm{det}}\big\|_{L^2_x}^2\bigg]\leq \,ce^{cR^2},
\end{align*}
where $c=c(T,\Phi,\bfu_0)$ is independent of $R$. 
\end{lemma}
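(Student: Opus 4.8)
The plan is to reduce the pressure bound to the velocity estimates of Lemma \ref{lemma:3.1} via the mapping properties of $\Delta^{-1}$ and the incompressibility of the iterates. I first settle measurability and the claim $\pi_m^{\mathrm{det}}\in W^{1,2}(\mt)$: for $m\le\mathfrak j_R$ one has $\bfu_m=\bfu_m^R$, and Lemma \ref{lemma:3.1} (with $q=1$) gives $\E[\tau\sum_m\|\nabla^2\bfu_m^R\|_{L^2_x}^2]\le ce^{cR^2}<\infty$, so that $\bfu_m,\bfu_{m-1}\in W^{2,2}(\mt)$ $\p$-a.s. Since the bilinear map $(\bfv,\bfw)\mapsto-\Delta^{-1}\Div\Div(\bfv\otimes\bfw)$ is bounded from $W^{2,2}(\mt)\times W^{2,2}(\mt)$ into $W^{1,2}(\mt)$ and $\bfu_m,\bfu_{m-1}$ are $\mathfrak F_{t_m}$-measurable, the composition $\pi_m^{\mathrm{det}}$ is $\mathfrak F_{t_m}$-measurable with values in $W^{1,2}(\mt)$.

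The key pointwise estimate comes from writing $\nabla\pi_m^{\mathrm{det}}=-\nabla\Delta^{-1}\Div[\Div(\bfu_m\otimes\bfu_{m-1})]$. As $\Div\colon L^2(\mt)\to W^{-1,2}(\mt)$ is bounded, \eqref{eq:Delta2} yields $\|\nabla\pi_m^{\mathrm{det}}\|_{L^2_x}\le c\|\Div(\bfu_m\otimes\bfu_{m-1})\|_{L^2_x}$ (equivalently, $\nabla\Delta^{-1}\Div$ is bounded on $L^2(\mt)$). Using $\Div\bfu_{m-1}=0$ we have $\Div(\bfu_m\otimes\bfu_{m-1})=(\bfu_{m-1}\cdot\nabla)\bfu_m$, so that, by $W^{2,2}(\mt)\hookrightarrow L^\infty(\mt)$,
\[
\|\nabla\pi_m^{\mathrm{det}}\|_{L^2_x}\le c\,\|\bfu_{m-1}\|_{L^\infty_x}\|\nabla\bfu_m\|_{L^2_x}\le c\,\|\bfu_{m-1}\|_{W^{2,2}_x}\|\bfu_m\|_{W^{1,2}_x}.
\]

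Next I sum and take expectations. Since $\bfu_m=\bfu_m^R$ on $[\mathfrak j_R\ge m]$ and all summands are nonnegative, $\tau\sum_{m=1}^{\mathfrak j_R}\|\nabla\pi_m^{\mathrm{det}}\|_{L^2_x}^2\le\tau\sum_{m=1}^{M}\|\nabla\pi_m^{\mathrm{det},R}\|_{L^2_x}^2$, where $\pi_m^{\mathrm{det},R}$ is formed from $\bfu_m^R$; this replaces the stopped sum by a global one for the truncated iterates, for which Lemma \ref{lemma:3.1} applies at every $m$. Factoring out $\max_m\|\bfu_m^R\|_{W^{1,2}_x}^2$, splitting $\|\bfu_{m-1}^R\|_{W^{2,2}_x}^2\le c(\|\bfu_{m-1}^R\|_{W^{1,2}_x}^2+\|\nabla^2\bfu_{m-1}^R\|_{L^2_x}^2)$ and using $\tau\sum_m\|\bfu_{m-1}^R\|_{W^{1,2}_x}^2\le T\max_m\|\bfu_m^R\|_{W^{1,2}_x}^2$ gives
\[
\tau\sum_{m=1}^{M}\|\nabla\pi_m^{\mathrm{det},R}\|_{L^2_x}^2\le c(1+T)\Big(\max_{1\le m\le M}\|\bfu_m^R\|_{W^{1,2}_x}^2+\tau\sum_{m=1}^{M}\|\nabla^2\bfu_m^R\|_{L^2_x}^2\Big)^2.
\]
Taking expectations, the right-hand side is bounded by $ce^{cR^2}$ by the second-moment (i.e.\ $q=2$) version of the energy estimate of Lemma \ref{lemma:3.1} in its combined ``$\max+\tau\sum$'' form, the discrete counterpart of Lemma \ref{lem:reg}(b) with $r=4$, whose validity under $\bfu_0\in L^4(\Omega,W^{1,2}_{\Div}(\mt))$ is recorded in the Remark following Corollary \ref{cor:3.1}. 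This is exactly the claimed bound.

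The main obstacle is the quadratic coupling between the pathwise maximum $\max_m\|\bfu_m^R\|_{W^{1,2}_x}$ and the dissipation sum $\tau\sum_m\|\nabla^2\bfu_m^R\|_{L^2_x}^2$: its expectation cannot be controlled by the first-moment energy bound and forces the use of the square of the discrete energy, which is where the fourth-moment hypothesis on $\bfu_0$ enters. A secondary point, handled above by passing to the truncated iterates, is that the stopping index $\mathfrak j_R$ only guarantees $\|\bfu_m\|_{W^{2,2}_x}<R$ strictly before blow-up, so the $W^{2,2}_x$-control needed near $m=\mathfrak j_R$ must be supplied by the a priori estimates rather than by the definition of ${\mathfrak s}_{R}^{\tt d}$.
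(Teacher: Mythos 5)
Your overall strategy --- reduce the pressure bound to the discrete energy estimates via the $L^2$-continuity of $\nabla\Delta^{-1}\Div$ --- is the same as the paper's, and the measurability part is fine. But your H\"older split is turned the wrong way round, and this creates two concrete problems. First, estimating $\|(\bfu_{m-1}\cdot\nabla)\bfu_m\|_{L^2_x}\le\|\bfu_{m-1}\|_{L^\infty_x}\|\nabla\bfu_m\|_{L^2_x}\le c\|\bfu_{m-1}\|_{W^{2,2}_x}\|\bfu_m\|_{W^{1,2}_x}$ puts the high norm on the \emph{shifted} iterate; at $m=1$ this is $\|\bfu_0\|_{W^{2,2}_x}$, which is not finite under the lemma's hypothesis $\bfu_0\in L^4(\Omega;W^{1,2}_{\Div}(\mt))$, so your very first summand is uncontrolled. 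Second, after factoring out $\max_m\|\bfu_m^R\|_{W^{1,2}_x}^2$ your final bound requires
\begin{align*}
\E\bigg[\Big(\tau\sum_{m=1}^{M}\|\nabla^2\bfu_m^R\|_{L^2_x}^2\Big)^2\bigg]\leq ce^{cR^2},
\end{align*}
i.e.\ the second moment of the \emph{unweighted} dissipation sum. Lemma \ref{lemma:3.1} with $q=2$ does not provide this: it provides the weighted quantity $\E\big[\sum_m\tau\|\bfu_m^R\|_{W^{1,2}_x}^{2}\|\nabla^2\bfu_m^R\|_{L^2_x}^2\big]$, and the Remark after Corollary \ref{cor:3.1} only records a higher moment of the increment sum $\sum_m\|\nabla(\bfu_m-\bfu_{m-1})\|_{L^2_x}^2$. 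The bound you need is plausible in the spirit of those estimates, but it is not among the stated ones and would have to be derived separately.

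Both problems disappear if you split the nonlinearity as $L^4\times L^4$ rather than $L^\infty\times L^2$: by Sobolev embedding in three dimensions,
\begin{align*}
\|\nabla\pi_m^{\mathrm{det}}\|_{L^2_x}^2\leq c\,\|\Div(\bfu_m\otimes\bfu_{m-1})\|_{L^2_x}^2\leq c\,\|\bfu_m\|_{L^4_x}^2\|\nabla\bfu_m\|_{L^4_x}^2\leq c\,\|\bfu_m\|_{W^{1,2}_x}^2\|\bfu_m\|_{W^{2,2}_x}^2
\end{align*}
(the contribution of $\bfu_{m-1}$ is handled the same way and only ever requires its $W^{1,2}_x$-norm). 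Then $\tau\sum_{m=1}^{\mathfrak j_R}\|\nabla\pi_m^{\mathrm{det}}\|_{L^2_x}^2$ is dominated by $T\max_m\|\bfu_m\|^4_{W^{1,2}_x}$ plus exactly the weighted dissipation term appearing in Corollary \ref{cor:3.1}~(a) with $q=2$ --- which is the paper's argument. This lands directly on a stated estimate, needs no passage to the truncated iterates and no decoupling of the maximum from the sum, and uses no regularity of $\bfu_0$ beyond $W^{1,2}_x$.
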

\begin{proof}
The $\mathfrak F_{t_m}$-measurability of $\pi_m^{\mathrm{det}}$ follows directly from the measurability of $\bfu_m$.
By continuity of the operator $\nabla\Delta^{-1}\Div$ on $L^2(\mt)$, we have
\begin{align*}
\big\|\nabla \pi_m^{\mathrm{det}}\big\|_{L^2_x}^2&\leq\,c\,\big\|\Div(\bfu_m\otimes\bfu_m)\big\|^2_{L^2_x}\leq\,c\,\|\bfu_m\|_{{L^4_x}}^2\|\nabla\bfu_m\|_{{L^4_x}}^2\leq\,c\,\|\bfu_m\|_{{W^{1,2}_x}}^2\|\nabla^2\bfu_m\|^2_{{W^{2,2}_x}}
\end{align*}
$\p$-a.s., also making use of Sobolev's embedding in three dimensions. Now, summing with respect to $m$, applying expectations and using Corollary \ref{cor:3.1} (a) with $q=2$ yields the claim.
\end{proof}
\begin{lemma}\label{lemma:3.3}
Assume that $\bfu_0\in L^{2}(\Omega,L^{2}_{div}(\mt))$ and that $\Phi$ satisfies \eqref{eq:phi0}--\eqref{eq:phi1a}.
For all $m\in\{1,...,M\}$ the random variable $\Phi^\pi_{m}$ is $\mathfrak F_{t_m}$-measureable, has values in $L_2(\mathfrak U;W^{1,2}(\mt))$ and we have
uniformly in $\tau$
\begin{align*}
\E\bigg[\tau \sum_{m=1}^{M}\big\|\Phi^\pi_m\big\|_{L_2(\mathfrak U;W^{1,2}_x)}^2\bigg]\leq \,c
\end{align*}
where $c=c(T,\Phi,\bfu_0)$. 
\end{lemma}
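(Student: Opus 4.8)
The plan is to reduce the estimate to the basic energy bound of Lemma \ref{lemma:3.1a}, exploiting that the gradient part of the Helmholtz projection is a bounded operator and that $\Phi$ grows only linearly, so that no discrete stopping time is needed here. Measurability is immediate: for $m\geq1$ the iterate $\bfu_m$ is $\mathfrak F_{t_m}$-measurable and, by the construction \eqref{tdiscr}, takes values in $W^{1,2}_{\Div}(\mt)$ $\p$-a.s.; since $\Phi$ and the deterministic operator $\nabla\Delta^{-1}\Div$ act fibrewise in $\omega$, $\Phi^\pi_m=-\nabla\Delta^{-1}\Div\Phi(\bfu_m)$ is $\mathfrak F_{t_m}$-measurable. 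For the regularity I would note that $\Phi(\bfu_m)\in L_2(\mathfrak U;W^{1,2}(\mt))$ (structural hypothesis on $\Phi$, valid since $\bfu_m\in W^{1,2}$) and that $\nabla\Delta^{-1}\Div$ is a zeroth-order operator bounded on $W^{1,2}(\mt)$ — this follows from \eqref{eq:Delta3} ($\Div\Phi(\bfu_m)\in L^2$, then $\Delta^{-1}:L^2\to W^{2,2}$, then $\nabla:W^{2,2}\to W^{1,2}$), in analogy with the $L^2$-continuity used in Lemma \ref{lemma:3.2}. Since a bounded operator on the Hilbert space $W^{1,2}(\mt)$ induces one of the same norm on $L_2(\mathfrak U;W^{1,2}(\mt))$, we get $\Phi^\pi_m\in L_2(\mathfrak U;W^{1,2}(\mt))$ with $\|\Phi^\pi_m\|_{L_2(\mathfrak U;W^{1,2}_x)}\leq c\,\|\Phi(\bfu_m)\|_{L_2(\mathfrak U;W^{1,2}_x)}$, the constant independent of $m$, $\tau$ and $R$.

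Squaring this bound, multiplying by $\tau$, summing over $m$, taking expectations and invoking \eqref{eq:phi1a} then yields
\begin{align*}
\E\bigg[\tau\sum_{m=1}^M\|\Phi^\pi_m\|_{L_2(\mathfrak U;W^{1,2}_x)}^2\bigg]\leq c\,\E\bigg[\tau\sum_{m=1}^M\big(1+\|\bfu_m\|_{W^{1,2}_x}^2\big)\bigg].
\end{align*}
The constant term contributes $c\,\tau M=cT$. Writing $\|\bfu_m\|_{W^{1,2}_x}^2=\|\bfu_m\|_{L^2_x}^2+\|\nabla\bfu_m\|_{L^2_x}^2$, the gradient sum $\tau\sum_m\|\nabla\bfu_m\|_{L^2_x}^2$ is bounded in expectation by Lemma \ref{lemma:3.1a} with $q=1$, while $\tau\sum_m\|\bfu_m\|_{L^2_x}^2\leq T\max_m\|\bfu_m\|_{L^2_x}^2$ is bounded in expectation by the same lemma. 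This gives the asserted bound with $c=c(T,\Phi,\bfu_0)$.

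The decisive, and essentially only non-routine, observation is why the summation runs all the way to $M$ and the constant is independent of $R$, in contrast to the quadratic deterministic pressure of Lemma \ref{lemma:3.2}. It is precisely the linearity of the growth in \eqref{eq:phi1a}: it converts the $\tau$-weighted sum of $\|\Phi^\pi_m\|^2$ into the $\tau$-weighted $W^{1,2}$-energy of $\bfu_m$, which is controlled globally and uniformly in $M$ and $R$ by the elementary a priori estimate, so that neither the $R$-truncation \eqref{tdiscrR} nor the discrete stopping time $\mathfrak s_R^{\tt d}$ enters. Beyond correctly matching this weighted energy to Lemma \ref{lemma:3.1a}, I do not anticipate any genuine difficulty.
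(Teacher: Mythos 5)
Your proposal is correct and follows essentially the same route as the paper: boundedness of $\nabla\Delta^{-1}\Div$ on $W^{1,2}(\mt)$ (hence on $L_2(\mathfrak U;W^{1,2}(\mt))$), the linear growth bound \eqref{eq:phi1a}, and then summation plus the $q=1$ energy estimate of Lemma \ref{lemma:3.1a}. Your closing remark on why the bound is global in $m$ and $R$-independent — the linearity of \eqref{eq:phi1a} reducing everything to the untruncated energy bound — is exactly the point, even if the paper leaves it implicit.
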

\begin{proof}
As with Lemma \ref{lemma:3.2}, the proof mainly relies on the continuity of $\nabla\Delta^{-1}\Div$ on $L^2(\mt)$. Here, we have by \eqref{eq:phi1a}
\begin{align*}
\|\Phi^\pi_m\|^2_{L_2(\mathfrak U;W^{1,2}_x)}&=\sum_{k\geq1}\|\nabla\Delta^{-1}\Div\big(\Phi(\bfu_m)e_k\big)\|^2_{W^{1,2}_x}\\
&\leq\,c\,\sum_{k\geq1}\|\Phi(\bfu_m)e_k\|^2_{W^{1,2}_x}=c\,\|\Phi(\bfu_m)\|^2_{L_2(\mathfrak U;W^{1,2}_x)}\leq\,c\,\big(1+\|\nabla\bfu_m\|^2_{W^{1,2}_x}\big).
\end{align*}
Summing over $m$, applying expectations and using Lemma \ref{lemma:3.1a}
finishes the proof.
\end{proof}

\subsection{Temporal error analysis}
\label{sec:terror}
For every $m\geq 1$ introduce
 the discrete stopping time
\begin{align}\label{eq:mR}\mathfrak t_m^R:=\max_{1\leq n\leq m}\big\{t_n:t_n\leq \mathfrak t_R\big\},
\end{align}
which is obviously $\mathfrak F_{t_m}$-measurable. 
Furthermore, we define $\mathfrak m_R$ as the unique index in $\{1,2,\dots,M\}$ such that $\mathfrak t_M^R=t_{\mathfrak m_R}$.
Our main effort in this section is devoted to the proof of the following theorem.

\begin{theorem}\label{thm:4t}
Let $\bfu_0\in L^8(\Omega,W^{2,2}_{\Div}(\mt))$ be $\F_0$-measurable and assume that $\Phi$ satisfies \eqref{eq:phi0}--\eqref{eq:phi2b}. Let $$(\bfu,(\mathfrak{t}_R)_{R\in\N},\mathfrak{t})$$ be the unique maximal global strong solution to \eqref{eq:SNS} in the sense of Definition \ref{def:maxsol}.
Then we have for all $R\in\N$ and all $\alpha<\frac{1}{2}$
\begin{align}\label{eq:thm:4t}
\begin{aligned}
\E\bigg[\max_{1\leq m\leq \mathfrak m_R}\|\bfu(t_m)-\bfu_{m}\|_{L^2_x}^2&+\sum_{m=1}^{\mathfrak m_R} \tau\|\nabla\bfu(t_{m})-\nabla\bfu_{m}\|_{L^2_x}^2\bigg)\bigg]\leq \,ce^{cR^2}\,\tau^{2\alpha},
\end{aligned}
\end{align}
where $(\bfu_{m})_{m=1}^M$ is the solution to \eqref{tdiscr}.
The constant $c$ in \eqref{eq:thm:4t} is independent of $\tau$ and $R$.
\end{theorem}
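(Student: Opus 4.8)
The plan is to estimate the error $\bfe_m := \bfu(t_m) - \bfu_m$ by deriving a discrete evolution equation for $\bfe_m$ and closing a Gronwall-type argument, stopped at the index $\mathfrak m_R$ so that all the $R$-dependent a priori bounds from Lemma \ref{lem:reg}, Lemma \ref{lem:pressure}, Corollary \ref{cor:uholder} and Corollary \ref{cor:3.1} are available. First I would write the continuous momentum equation \eqref{eq:mom} (in its pressure-decomposed form \eqref{eq:pressure}) integrated over a single time slab $[t_{m-1}\wedge\mathfrak t_R, t_m\wedge\mathfrak t_R]$ and subtract the discrete scheme \eqref{tdiscr}. Testing the resulting identity with $\bfe_m$ (which is admissible since both $\bfu(t_m)$ and $\bfu_m$ lie in the relevant divergence-free space up to the stopping index), and using the elementary identity $2(\bfe_m-\bfe_{m-1})\cdot\bfe_m = |\bfe_m|^2 - |\bfe_{m-1}|^2 + |\bfe_m-\bfe_{m-1}|^2$, produces the telescoping energy balance
\begin{align*}
\tfrac12\|\bfe_m\|_{L^2_x}^2 - \tfrac12\|\bfe_{m-1}\|_{L^2_x}^2 + \tfrac12\|\bfe_m-\bfe_{m-1}\|_{L^2_x}^2 + \mu\tau\|\nabla\bfe_m\|_{L^2_x}^2 = \mathrm{(conv)} + \mathrm{(stoch)} + \mathrm{(cons)},
\end{align*}
where $\mathrm{(conv)}$ collects the convective differences, $\mathrm{(stoch)}$ the noise mismatch, and $\mathrm{(cons)}$ the consistency error coming from replacing the time integral of $\bfu$ by a one-step increment.

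The consistency error is where the temporal rate enters. Here I would use that the continuous solution is Hölder-$\alpha$ in time (Corollary \ref{cor:uholder}) together with the pressure estimates (Lemma \ref{lem:pressure}): the difference between $\int_{t_{m-1}}^{t_m}\mu\Delta\bfu\,\dif s$ and $\mu\tau\Delta\bfu(t_m)$, and similarly for the convective and pressure terms, is controlled by $\tau^{\alpha}$ times a norm whose $R$-dependent moments are bounded. The deterministic-pressure contribution is handled because $\bfe_m$ is genuinely (not merely discretely) divergence-free in the continuous comparison, so that term either drops out or is absorbed via \eqref{eq:stabpi}. For the stochastic terms I would separate the martingale part, whose maximum is handled by Burkholder--Davis--Gundy exactly as in the proofs of Lemmas \ref{lemma:3.1} and \ref{lemma:3.1B}, from the It\^{o}-correction/quadratic-variation part, which is controlled by the Lipschitz assumption \eqref{eq:phi0}; the increment $\int_{t_{m-1}}^{t_m}\Phi(\bfu)\,\dif W - \Phi(\bfu_{m-1})\Delta_m W$ splits into a part proportional to $\|\bfe_{m-1}\|_{L^2_x}$ (absorbable into the Gronwall sum) and a part measuring the temporal oscillation of $\Phi(\bfu)$ over the slab, again of size $\tau^{\alpha}$ by the time-regularity of $\bfu$ and \eqref{eq:phi0}.

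The main obstacle is the convective term, and this is precisely the point flagged in the introduction as differing from the 2D analysis. The difference $\bfu(t_m)\otimes\bfu(t_{m-1}) - \bfu_m\otimes\bfu_{m-1}$ must be split as $\bfe_m\otimes\bfu(t_{m-1}) + \bfu_m\otimes\bfe_{m-1}$ plus a temporal-consistency remainder; the dangerous factor is the one multiplying $\nabla\bfe_m$, which in three dimensions one cannot close by the $L^4$--$L^4$ interpolation available in 2D. Instead I would exploit the $W^{2,2}$-regularity bound on the continuous solution up to $\mathfrak t_R$ (Lemma \ref{lem:reg}(c)) and on the discrete iterates up to $\mathfrak j_R$ (Corollary \ref{cor:3.1}(b)), using the embedding $W^{2,2}(\mt)\hookrightarrow L^\infty(\mt)$ to bound the coefficient by $cR$ and then applying Young's inequality to absorb the resulting $\delta\|\nabla\bfe_m\|_{L^2_x}^2$ into the dissipation, at the cost of the exponential factor $e^{cR^2}$. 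After summing from $m=1$ to $\mathfrak m_R$, taking the maximum inside the expectation, and applying the discrete Gronwall lemma together with BDG on the maximal martingale term, all remainder terms are bounded by $ce^{cR^2}\tau^{2\alpha}$, which is the assertion \eqref{eq:thm:4t}.
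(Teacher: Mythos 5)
Your proposal follows essentially the same route as the paper: subtract the discrete scheme from the momentum equation, test with $\bfe_m$, use the identity $2\bfa\cdot(\bfa-\bfb)=|\bfa|^2-|\bfb|^2+|\bfa-\bfb|^2$, control the consistency terms by the temporal H\"older regularity of Corollary \ref{cor:uholder}, treat the noise by splitting $\Phi(\bfu)-\Phi(\bfu_{n-1})$ into a H\"older part and an $\bfe_{n-1}$ part (BDG for the piece tested against $\bfe_{n-1}$, It\^o isometry for the piece tested against $\bfe_n-\bfe_{n-1}$), and close with the discrete Gronwall lemma. One remark on the pressure: since the test function $\bfe_m$ is exactly divergence-free, the paper never introduces the pressure decomposition here at all, so your appeal to \eqref{eq:stabpi} is superfluous.

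There is, however, one step in your treatment of the convective term that as written would fail. You split the nonlinearity as $\bfe_m\otimes\bfu(t_{m-1})+\bfu_m\otimes\bfe_{m-1}$ and propose to bound the coefficient in $L^\infty$ by $cR$ using Corollary \ref{cor:3.1} for the \emph{discrete} iterates up to $\mathfrak j_R$. But Theorem \ref{thm:4t} stops only at $\mathfrak m_R$, which is defined through the \emph{continuous} stopping time $\mathfrak t_R$; there is no a priori relation between $\mathfrak m_R$ and $\mathfrak j_R$, so $\|\bfu_m\|_{W^{2,2}_x}\leq R$ (hence $\|\bfu_m\|_{L^\infty_x}\leq cR$) is simply not available for $m\leq\mathfrak m_R$. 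Invoking it would only prove the estimate up to $\mathfrak m_R\wedge\mathfrak j_R$, a strictly weaker statement. The paper avoids this by arranging the splitting so that the discrete iterate appears only in the term $(\bfu_{m-1}\cdot\nabla)\bfe_m\cdot\bfe_m$, which vanishes by divergence-freeness, while the $L^\infty$ bound is applied only to $\bfu(t_m)$, controlled by $R$ for $m\leq\mathfrak m_R$ by the very definition of $\mathfrak t_R$; the resulting term $-\tau\int_{\mt}(\nabla\bfe_m)\bfe_{m-1}\cdot\bfu(t_m)\dx$ is then estimated by $\delta\tau\|\nabla\bfe_m\|_{L^2_x}^2+c(\delta)R^2\|\bfe_{m-1}\|_{L^2_x}^2$ and fed into Gronwall, producing the factor $e^{cR^2}$. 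Your splitting can be repaired: since $\Div\bfe_{m-1}=0$, the cubic term $\int_{\mt}(\bfe_{m-1}\cdot\nabla)\bfe_m\cdot\bfe_m\dx$ vanishes, so you may replace $\bfu_m$ by $\bfu(t_m)$ in your dangerous term and land on the paper's estimate — but this cancellation must be made explicit, and the appeal to the discrete $W^{2,2}$ bounds must be dropped.
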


Our main result on the temporal error is now a direct consequence of Theorem \ref{eq:thm:4t}: Supposing that $R= R(\tau)\leq  c^{-1/2}\sqrt{-2\varepsilon \log \tau}$ as $\tau\rightarrow0$ (note that this includes, in particular, any choice of fixed $R\in\N$), where $\varepsilon>0$ is arbitrary, 
and relabelling $\alpha$ we have proved the following result.

\begin{theorem}\label{thm:maint}
Let $\bfu_0\in L^8(\Omega,W^{2,2}_{\Div}(\mt))$ be $\F_0$-measurable and assume that $\Phi$ satisfies \eqref{eq:phi0}--\eqref{eq:phi2b}. Let $$(\bfu,(\mathfrak{t}_R)_{R\in\N},\mathfrak{t})$$ be the unique maximal global strong solution to \eqref{eq:SNS} from Theorem \ref{thm:inc2d}.
Then we have for any $\xi>0$, $\alpha<\frac{1}{2}$, 
\begin{align*}
&\mathbb P\bigg[\max_{1\leq m\leq \mathfrak m_{R}}\|\bfu(t_m)-\bfu_{m}\|_{L^2_x}^2+\sum_{m=1}^{\mathfrak m_{R}} \tau\|\nabla\bfu(t_m)-\nabla\bfu_{m}\|_{L^2_x}^2>\xi\,\tau^{2\alpha}\bigg]\rightarrow 0
\end{align*}
as $\tau\rightarrow0$,
where $(\bfu_{m})_{m=1}^M$ is the solution to \eqref{tdiscr}.
\end{theorem}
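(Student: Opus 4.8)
The statement is a corollary of the moment estimate \eqref{eq:thm:4t} of Theorem \ref{thm:4t}: all of the analytic work has already been done there, and what remains is to convert that deterministic-rate bound into a statement about convergence in probability by combining Markov's inequality with a $\tau$-dependent choice of the truncation level $R$. The plan is as follows.

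First I would fix the target exponent $\alpha<\tfrac12$ and the threshold $\xi>0$ appearing in the statement, and choose an auxiliary exponent $\beta$ with $\alpha<\beta<\tfrac12$. Applying \eqref{eq:thm:4t} with $\beta$ in the role of $\alpha$ gives
\begin{align*}
\E\bigg[\max_{1\leq m\leq \mathfrak m_R}\|\bfu(t_m)-\bfu_{m}\|_{L^2_x}^2+\sum_{m=1}^{\mathfrak m_R} \tau\|\nabla\bfu(t_m)-\nabla\bfu_{m}\|_{L^2_x}^2\bigg]\leq\,ce^{cR^2}\tau^{2\beta}.
\end{align*}
The key step is to absorb the prefactor $e^{cR^2}$ into a fraction of the temporal gain by letting $R$ grow only logarithmically in $1/\tau$. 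Choosing $R=R(\tau)$ subject to $cR^2\leq(\beta-\alpha)\log(1/\tau)$, equivalently $R(\tau)\leq c^{-1/2}\sqrt{(\beta-\alpha)\log(1/\tau)}$ (a constraint satisfied, for $\tau$ small, by any fixed $R\in\N$), we have $e^{cR^2}\leq\tau^{-(\beta-\alpha)}$ and therefore
\begin{align*}
\E\bigg[\max_{1\leq m\leq \mathfrak m_R}\|\bfu(t_m)-\bfu_{m}\|_{L^2_x}^2+\sum_{m=1}^{\mathfrak m_R} \tau\|\nabla\bfu(t_m)-\nabla\bfu_{m}\|_{L^2_x}^2\bigg]\leq\,c\,\tau^{\alpha+\beta}.
\end{align*}

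Finally I would apply Markov's inequality. Since $\alpha+\beta>2\alpha$, this yields
\begin{align*}
\mathbb P\bigg[\max_{1\leq m\leq \mathfrak m_{R}}\|\bfu(t_m)-\bfu_{m}\|_{L^2_x}^2+\sum_{m=1}^{\mathfrak m_{R}} \tau\|\nabla\bfu(t_m)-\nabla\bfu_{m}\|_{L^2_x}^2>\xi\,\tau^{2\alpha}\bigg]\leq\frac{c}{\xi}\,\tau^{\beta-\alpha},
\end{align*}
and the right-hand side tends to $0$ as $\tau\to0$ because $\beta>\alpha$, which is the assertion. Equivalently, one may first relabel $\alpha-\varepsilon\mapsto\alpha$ in the $\varepsilon$-absorbed moment bound, as indicated in the text preceding the statement, and then apply Markov with a strictly smaller exponent.

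The deduction is routine; the only structural subtlety, already settled in the proof of Theorem \ref{thm:4t}, is the dual role of the parameter $R$. It must be permitted to grow so that the stopping index $\mathfrak m_R$ captures an increasing portion of the random lifespan $\mathfrak t_R\uparrow\mathfrak t$ of the strong solution, yet the exponential factor $e^{cR^2}$ produced by the discrete Gronwall argument (cf. Lemmas \ref{lemma:3.1}--\ref{lemma:3.1B}) must not destroy the temporal rate $\tau^{2\beta}$. The logarithmic scaling $R(\tau)\sim\sqrt{\log(1/\tau)}$ reconciles the two, since it renders $e^{cR^2}$ a negative power of $\tau$ that is as small as we please, leaving a genuine positive power of $\tau$ available for the Markov estimate.
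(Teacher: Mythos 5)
Your proposal is correct and follows exactly the route the paper takes: the paper derives Theorem \ref{thm:maint} from the moment bound \eqref{eq:thm:4t} of Theorem \ref{thm:4t} by choosing $R(\tau)\lesssim\sqrt{\log(1/\tau)}$ so that $e^{cR^2}$ becomes a small negative power of $\tau$, then applying Markov's inequality and relabelling $\alpha$. Your use of an intermediate exponent $\beta\in(\alpha,\tfrac12)$ is just a more explicit bookkeeping of the paper's ``$-2\varepsilon\log\tau$'' choice and relabelling step.
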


\begin{proof}[Proof of Theorem \ref{thm:4t}]
Define the error $\bfe_{m}=\bfu(t_m)-\bfu_{m}$ for $m\in\{0,1,\dots,\mathfrak m_R\}$. Subtracting \eqref{eq:mom} and \eqref{tdiscr} and using that $t_m\leq \mathfrak t_R$ for $m\leq\mathfrak m_R$ we obtain
\begin{align*}
\int_{\mt}&\bfe_{m}\cdot\bfvarphi \dx +\mu \int_{ t_{m-1}}^{ t_m}\int_{\mt}\nabla\bfu(\sigma):\nabla\bfphi\dx\ds-\mu\tau\int_{\mt}\nabla\bfu_{m}:\nabla\bfphi\dx\\&=\int_{\mt}\bfe_{m-1}\cdot\bfvarphi \dx+\tau\int_{\mt}(\nabla\bfu_{m})\bfu_{m-1}\cdot\bfphi\dx-\int_{ t_{m-1}}^{t_m}\int_{\mt}(\nabla\bfu(\sigma))\bfu(\sigma)\,\cdot\bfphi\dxs\\
&+\int_{\mt}\int_{t_{m-1}}^{ t_m}\Phi(\bfu(\sigma))\,\dd W\cdot \bfvarphi\dx-\int_{\mt}\int_{t_{m-1}}^{t_m}\Phi(\bfu_{m-1})\,\dd W\cdot \bfvarphi\dx
\end{align*}
for every $\bfphi\in W^{1,2}_{\Div}(\mt)$.
Setting $\bfphi=\bfe_{h,m}$ and applying the identity $\bfa\cdot(\bfa-\bfb)=\frac{1}{2}\big(|\bfa|^2-|\bfb|^2+|\bfa-\bfb|^2\big)$ (which holds for any $\bfa,\bfb\in\mathbb R^3$) we gain
\begin{align*}
\int_{\mt}&\frac{1}{2}\big(|\bfe_{m}|^2-|\bfe_{m-1}|^2+|\bfe_{m}-\bfe_{m-1}|^2\big) \dx+\mu \tau\int_{\mt}|\nabla\bfe_{m}|^2\dx\\
&=\mu\int_{t_{m-1}}^{t_{m}}\int_{\mt}\big(\nabla\bfu( t_{m})-\nabla\bfu(\sigma)\big):\nabla\bfe_{m}\dx\ds
\\
&+\int_{t_{m-1}}^{t_{m}}\int_{\mt}\Big((\nabla\bfu(t_{m}))\bfu(t_{m-1})-(\nabla\bfu(\sigma))\bfu(\sigma)\Big)\cdot\bfe_{m}\dxs\\
&-\tau\int_{\mt}\Big((\nabla\bfu(t_{m}))\bfu(t_{m-1})-(\nabla\bfu_{m})\bfu_{m-1}\Big)\cdot\bfe_{m}\dx\\
&+\int_{\mt}\int_{t_{m-1}}^{t_m}\big(\Phi(\bfu(\sigma))-\Phi(\bfu_{m-1})\big)\,\dd W\cdot \bfe_{m}\dx\\
&=:I_1(m)+\dots +I_5(m).
\end{align*}
Eventually, we will take the maximum with respect to $m\in\{1,\dots,\mathfrak m_R\}$ and apply expectations. Let us explain how to deal  with $\E\big[\max_m I_1(m)],\dots ,\E[\max_mI_5(m)]$ independently.

We have 
\begin{align*}
I_1(m)&\leq \,\kappa\tau\int_{\mt}|\nabla\bfe_{m}|^2\dx+c(\kappa)\int_{t_{m-1}}^{t_{m}}\int_{\mt}|\nabla(\bfu(t_{m})-\bfu(\sigma)|^2\dx\ds\\
&\leq \,\kappa\tau\int_{\mt}|\nabla\bfe_{m}|^2\dx+c(\kappa)\tau^{1+2\alpha}\|\nabla\bfu\|_{C^\alpha([t_{m-1},t_{m}];L^2_x)}^2,
\end{align*}
where the expectation of the last term can be controlled for $m\leq\mathfrak m_R$ by $\tau^{2\alpha+1}R^{12}$ using Corollary \ref{cor:uholder} and $\mathfrak t_{m}^R\leq\mathfrak t_R$.
We proceed by
\begin{align*}
I_2(m)&=-\int_{ t_{m-1}}^{t_{m}}\int_{\mt}\Big(\bfu(t_{m})\otimes\bfu( t_{m-1})-\bfu(\sigma)\otimes\bfu(\sigma)\Big):\nabla\bfe_{m}\dxs\\
&\leq \,\kappa\tau\int_{\mt}|\nabla\bfe_{m}|^2\dx+c(\kappa)\int_{ t_{m-1}}^{t_{m}}\int_{\mt}|\bfu(t_{m})\otimes\bfu(t_{m-1})-\bfu(\sigma)\otimes\bfu(\sigma)|^2\dx\ds\\
&\leq \,\delta\tau\int_{\mt}|\nabla\bfe_{m}|^2\dx+c(\delta)\tau^{1+2\alpha}\|\bfu\|^2_{L^\infty(( t_{m-1}, t_{m})\times\mt)}\|\bfu\|_{C^\alpha([t_{m-1},t_m];L^2_x)}^2\\
&\leq \,\delta\tau_{m}^R\int_{\mt}|\nabla\bfe_{m}|^2\dx+c(\delta)\tau^{1+2\alpha}R^2\|\bfu\|_{C^\alpha([t_{m-1},t_m];L^2_x)}^2
\end{align*}
for $m\leq \mathfrak m_R$ using the embedding $W^{2,2}(\mt)\hookrightarrow L^\infty(\mt)$ and $\mathfrak t_{m}^R\leq\mathfrak t_R$. 
We rewrite $I_3(m)$ as
\begin{align*}
I_3(m)&=-\tau\int_{\mt}(\nabla\bfe_{m})\bfe_{m-1}\cdot\bfu( t_{m})\dx
\end{align*}
and obtain for any $\delta>0$ and $m\leq\mathfrak m_R$
\begin{align*}
I_3(m)&\leq \tau\|\nabla\bfe_{m}\|_{L^2_x}\|\bfe_{m-1}\|_{L^2_x}\|\bfu(t_{m})\|_{L^\infty_x}\\
&\leq\,\delta\tau\|\nabla\bfe_{m}\|^2_{L^2_x}+c(\delta)\, R^2\|\bfe_{m-1}\|^2_{L^2_x}.
\end{align*}
The last term will be dealt with by Gronwall's lemma leading to a constant of the form $c e^{cR^2}$. 

In order to estimate the stochastic term $I_5$ we write
\begin{align*}
\mathscr M_{m,1}&=\sum_{n=1}^mI_5(n)=
\sum_{n=1}^m\int_{\mt}\int_{t_{n-1}}^{t_{n}}\big(\Phi(\bfu)-\Phi(\bfu_{n-1})\big)\,\dd W\cdot \bfe_{n}\dx\\
&= \sum_{n=1}^m\int_{\mt}\int_{t_{n-1}}^{ t_{n}}\big(\Phi(\bfu)-\Phi(\bfu_{n-1})\big)\,\dd W\cdot \bfe_{n-1}\dx\\
&+ \sum_{n=1}^m\int_{\mt}\int_{t_{n-1}}^{ t_{n}}\big(\Phi(\bfu)-\Phi(\bfu_{n-1})\big)\,\dd W\cdot \Pi_h(\bfe_{n}-\bfe_{n-1})\dx\\
&= \int_{0}^{ t_{m}}\sum_{n=1}^M\mathbf1_{[t_{n-1},t_n)}\int_{\mt}\big(\Phi(\bfu)-\Phi(\bfu_{n-1})\big)\,\dd W\cdot \bfe_{n-1}\dx\\
&+ \sum_{n=1}^m\int_{\mt}\int_{t_{n-1}}^{ t_{n}}\big(\Phi(\bfu)-\Phi(\bfu_{n-1})\big)\,\dd W\cdot (\bfe_{n}-\bfe_{n-1})\dx\\
&=:\mathscr M_{1}^1(t_{m})+\mathscr M_{m,1}^2.
\end{align*}
Since the process $(\mathscr M_{1}^1(t\wedge\mathfrak t_R))_{t\geq0}$ is an $(\mathfrak F_t)$-martingale, through the use of the Burkholder-Davis-Gundy inequality (using that $\mathfrak t_M^R\leq \mathfrak t_R$ by definition) we see that
\begin{align*}
&\E\bigg[\max_{1\leq m\leq \mathfrak m_R}\big|\mathscr M_{1}^1(t_m)\big|\bigg]\leq \E\bigg[\sup_{s\in[0,\mathfrak t_M^R]}\big|\mathscr M_{1}^1(s)\big|\bigg]\leq \E\bigg[\sup_{s\in[0,T]}\big|\mathscr M_{1}^1(s\wedge \mathfrak t_R)\big|\bigg]\\
&\leq\,c\,\E\bigg[\int_{0}^{T \wedge\mathfrak t_{R}}\sum_{n=1}^M\mathbf1_{[t_{n-1},t_n)}\|\Phi(\bfu)-\Phi(\bfu_{n-1})\|^2_{L_2(\mathfrak U,L^2_x)}\|\bfe_{n-1}\|^2_{L^2_x}\dt\bigg]^{\frac{1}{2}}\\
&\leq\,c\,\E\bigg[\max_{1\leq n\leq \mathfrak m_R}\|\bfe_{n}\|_{L^2_x}\bigg(\int_{0}^{T\wedge\mathfrak t_R}\sum_{n=1}^M\mathbf1_{[t_{n-1},t_n)}\|\Phi(\bfu)-\Phi(\bfu_{n-1})\|^2_{L_2(\mathfrak U,L^2_x)}\dt\bigg)^{\frac{1}{2}}\bigg]\\
&\leq\,\delta\,\E\bigg[\max_{1\leq n\leq \mathfrak m_R}\|\bfe_{n}\|^2_{L^2_x}\bigg]+\,c(\delta)\,\E\bigg[\int_{0}^{T\wedge\mathfrak t_R}\sum_{n=1}^M\mathbf1_{[t_{n-1},t_n)}\|\bfu-\bfu_{n-1}\|_{L^2_x}^2\dt\bigg]\\
&\leq\,\delta\,\E\bigg[\max_{1\leq n\leq \mathfrak m_R}\|\bfe_{n}\|^2_{L^2_x}\bigg]+\,c(\delta)\,\E\bigg[\int_{0}^{T\wedge\mathfrak t_R}\|\bfu-\bfu( t_{n-1})\|_{L^2_x}^2\dt\bigg]\\
&+\,c(\delta)\,\E\bigg[\int_{0}^{T\wedge\mathfrak t_R}\sum_{n=1}^M\mathbf1_{[t_{n-1},t_n)}\|\bfe_{n-1}\|_{L^2_x}^2\dt\bigg].
\end{align*}
Here, we also used \eqref{eq:phi0} as well as Young's inequality for arbitrary $\delta>0$. Finally, we can control the last term by
\begin{align*}
\E\bigg[\int_{0}^{T\wedge\mathfrak t_R}\sum_{n=1}^M\mathbf1_{[t_{n-1},t_n)}\|\bfe_{n-1}\|_{L^2_x}^2\dt\bigg]&\leq \E\bigg[\sum_{n=1}^{\mathfrak m_R+1}\tau\|\bfe_{n-1}\|_{L^2_x}^2\dt\bigg]\\
&\leq \E\bigg[\sum_{n=0}^{\mathfrak m_R}\tau\|\bfe_{n}\|_{L^2_x}^2\dt\bigg]
\end{align*}
since $\mathfrak t_R\wedge t_M\leq \mathfrak t^R_{M+1}$.
Applying \eqref{eq:stab'} as well as Lemma \ref{lem:reg} (b) and Corollary \ref{cor:uholder} (b), we obtain
\begin{align*}
\E\bigg[\max_{1\leq m\leq \mathfrak m_R}\big|\mathscr M_{1}^1(t_m)\big|\bigg]&\leq\,\delta\,\E\bigg[\max_{1\leq n\leq \mathfrak m_R}\|\bfe_{n}\|^2_{L^2_x}\bigg]+\,c(\delta)\,\E\bigg[\sum_{n=0}^{\mathfrak m_R} \tau\|\bfe_{n}\|_{L^2_x}^2\bigg]\\
&+c(\delta)\tau^{2\alpha}\E\big[\|\bfu\|_{C^\alpha([0,T\wedge\mathfrak t_R],L^2_x)}^2\big]\\
&\leq\,\delta\,\E\bigg[\max_{1\leq n\leq \mathfrak m_R}\|\bfe_{n}\|^2_{L^2_x}\bigg]+\,c(\delta)\,\E\bigg[\sum_{n=0}^{\mathfrak m_R} \tau\|\bfe_{n}\|_{L^2_x}^2\bigg]+c(\delta)\tau^{2\alpha}R^4.
\end{align*}
As far as $\mathscr M_{m,1}^2$ is concerned we argue similarly. Using the Cauchy-Schwarz inequality, Young's inequality, It\^{o}-isometry and \eqref{eq:phi0} we have
\begin{align*}
&\E\bigg[\max_{1\leq m\leq \mathfrak m_R}|\mathscr M_{m,1}^2|\bigg]\\&\leq \E\bigg[ \sum_{n=1}^{\mathfrak m_R}\bigg( \delta \|\bfe_{n}-\bfe_{n-1}\|_{L^2_x}^2 +c(\delta) \left\| \int_{ t_{n-1}}^{ t_{n}}\big(\Phi(\bfu)-\Phi(\bfu_{n-1})\big)\,\dd W  \right\|_{L^2_x}^2\bigg) \bigg]\\
&\leq \delta\E\bigg[ \sum_{n=1}^{\mathfrak m_R} \|\bfe_{n}-\bfe_{n-1}\|_{L^2_x}^2 \bigg] + c(\delta)\,\E\bigg[\sum_{n=1}^{\mathfrak m_R}\int_{t_{n-1}}^{ t_{n}}\|\bfu-\bfu_{n-1}\|_{L^2_x}^2\dt\bigg]\\
&\leq \delta\E\bigg[ \sum_{n=1}^{\mathfrak m_R} \|\bfe_{n}-\bfe_{n-1})\|_{L^2_x}^2 \bigg] +c(\delta) \,\E\bigg[\sum_{n=1}^{\mathfrak m_R} \int_{ t_{n-1}}^{ t_{n}}\|\bfu-\bfu(t_{n-1})\|_{L^2_x}^2\dt\bigg]\\&+\,c(\delta)\,\E\bigg[\sum_{n=1}^{\mathfrak m_R} \tau\|\bfe_{n-1}\|_{L^2_x}^2\bigg]\\
&\leq \delta\E\bigg[ \sum_{n=1}^{\mathfrak m_R} \|\bfe_{n}-\bfe_{n-1}\|_{L^2_x}^2 \bigg] +\,c(\delta)\,\E\bigg[\sum_{n=1}^{\mathfrak m_R} \tau\|\bfe_{n-1}\|_{L^2_x}^2\bigg]+c(\delta)\tau^{2\alpha}R^{12}
\end{align*}
as a consequence of Lemma \ref{lem:reg} (b) (using also \eqref{eq:stab}) of Corollary \ref{cor:uholder} (b).
Collecting all estimates, choosing $\delta$ small enough and applying Gronwall's lemma yields the claim.
\end{proof}

\section{Space-time discretisation}
\label{sec:txerror}
Now we consider a fully practical scheme combining the implicit Euler scheme in time (as in the last section) with a finite element approximation in space. 
For a given $h>0$, let $\bfu_{h,0}$ be an $\mathfrak F_0$-mesurable random variable with values in $V^h_{\Div}(\mt)$ (for instance $\Pi_h\bfu_0$). We aim at constructing iteratively a sequence of random variables $\bfu_{h,m}$ with values in $V^h_{\Div}(\mt)$ such that
for every $\bfphi\in V^h_{\Div}(\mt)$ it holds true $\p$-a.s.
\begin{align}\label{txdiscr}
\begin{aligned}
\int_{\mt}&\bfu_{h,m}\cdot\bfvarphi \dx +\Delta t\int_{\mt}\big((\nabla\bfu_{h,m})\bfu_{h,m-1}+(\Div\bfu_{h,m-1})\bfu_{h,m}\big)\cdot\bfphi\dx\\
&+\mu\,\Delta t\int_{\mt}\nabla\bfu_{m}:\nabla\bfphi\dx=\int_{\mt}\bfu_{h,m-1}\cdot\bfvarphi \dx+\int_{\mt}\Phi(\bfu_{h,m-1})\,\Delta_mW\cdot \bfvarphi\dx,
\end{aligned}
\end{align}
where $\Delta_m W=W(t_m)-W(t_{m-1})$. The existence of iterates $(\bfu_{h,m})_{m=1}^M$ given by \eqref{txdiscr}
which are $\mathfrak F_{t_m}$-measurable is shown in \cite[Lemma 3.1]{BCP}. Furthermore, it holds for $q\in\N$
\begin{align}
\label{lem:4.1}\E\bigg[\max_{1\leq m\leq M}\|\bfu_{h,m}\|^{2^q}_{L^{2}_x}+\tau\sum_{m=1}^M\|\bfu_{h,m}\|^{2^{q}-2}_{L^{2}_x}\|\nabla\bfu_{h,m}\|^2_{L^2_x}\bigg]&\leq\,c(q,T)\E\big[\|\bfu_{h,0}\|^{2^q}_{L^{2}_x}+1\big]
\end{align}
uniformly in $h$ and $\tau$.
 It is also shown there
that the sequence convergences in law to a martingale solution to \eqref{eq:SNS}. We strengthen this result in short-time (where the stopping times $\mathfrak j_R$ and $\mathfrak m_R$ are introduced below \eqref{eq:jR} and \eqref{eq:mR}, respectively)
by proving an optimal convergence rate with respect to convergence in probability
in the following theorem. Here we suppose that
$R=R(\tau,h)\leq c^{-1/2}\sqrt{-\varepsilon\min\{\log(\tau),\log(h^2)\}}$ as $\tau,h\rightarrow 0$, which inlcudes, in particular, any choice of fixed $R\in\N$.
\begin{theorem}\label{thm:maintx}
Let $\bfu_0\in L^8(\Omega,W^{2,2}_{\Div}(\mt))$ be $\F_0$-measurable and assume that $\Phi$ satisfies \eqref{eq:phi0}--\eqref{eq:phi2b}. Let $$(\bfu,(\mathfrak{t}_R)_{R\in\N},\mathfrak{t})$$ be the unique maximal global strong solution to \eqref{eq:SNS} from Theorem \ref{thm:inc2d}.
Then we have for any $\xi>0$, $\alpha<1$ and $\beta<1$, 
\begin{align*}
&\mathbb P\bigg[\max_{1\leq m\leq \mathfrak m_R\wedge \mathfrak j_R}\|\bfu(t_m)-\bfu_{h,m}\|_{L^2_x}^2+\sum_{m=1}^{\mathfrak m_R\wedge \mathfrak j_R} \tau\|\nabla\bfu(t_m)-\nabla\bfu_{h,m}\|_{L^2_x}^2>\xi\,(\tau^{2\alpha}+h^{2\beta})\bigg]\rightarrow 0
\end{align*}
as $\tau\rightarrow0$,
where $(\bfu_{h,m})_{m=1}^M$ is the solution to \eqref{txdiscr}.
\end{theorem}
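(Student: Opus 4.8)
The plan is to deduce the statement by combining the temporal error estimate of Theorem~\ref{thm:maint}, in the quantitative form of Theorem~\ref{thm:4t}, with the corresponding estimate for the error $\bfu_m-\bfu_{h,m}$ between the time-discrete iterates of \eqref{tdiscr} and the fully discrete iterates of \eqref{txdiscr}. Writing
\[
\bfu(t_m)-\bfu_{h,m}=\big(\bfu(t_m)-\bfu_m\big)+\big(\bfu_m-\bfu_{h,m}\big)
\]
and using $|\bfa+\bfb|^2\leq 2|\bfa|^2+2|\bfb|^2$ together with the observation that replacing $\mathfrak m_R\wedge\mathfrak j_R$ by $\mathfrak m_R$ (resp.\ by $\mathfrak j_R$) can only enlarge a maximum or a sum, the quantity appearing in the claim, truncated at $m\leq\mathfrak m_R\wedge\mathfrak j_R$, is dominated by twice the temporal error up to $\mathfrak m_R$ (controlled by Theorem~\ref{thm:4t}) plus twice the spatial error
\[
\mathcal E_h:=\max_{1\leq m\leq\mathfrak j_R}\|\bfu_m-\bfu_{h,m}\|_{L^2_x}^2+\sum_{m=1}^{\mathfrak j_R}\tau\|\nabla(\bfu_m-\bfu_{h,m})\|_{L^2_x}^2.
\]

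The heart of the matter --- and the main obstacle --- is the bound $\E[\mathcal E_h]\leq c\,e^{cR^2}h^{2\beta}$ for every $\beta<1$, which is the content of Theorem~\ref{thm:mainx}. To prove it I would subtract the fully discrete scheme \eqref{txdiscr} from the pressure formulation \eqref{eq:pressuret} of the time-discrete scheme and test the difference with the discretely solenoidal function $\Pi_h\bfu_m-\bfu_{h,m}\in V^h_{\Div}(\mt)$, which is admissible because \eqref{eq:pressuret} is valid for every $W^{1,2}$-test function; the identity $\bfa\cdot(\bfa-\bfb)=\tfrac12\big(|\bfa|^2-|\bfb|^2+|\bfa-\bfb|^2\big)$ is then applied as in the proof of Theorem~\ref{thm:4t}. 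The consistency error $\bfu_m-\Pi_h\bfu_m$ is absorbed by the finite-element estimates \eqref{eq:stab}--\eqref{eq:stab'} and the $W^{2,2}_x$-regularity of $(\bfu_m)$ from Corollary~\ref{cor:3.1}(b), which produces the factor $h^{2\beta}$; the discrete pressure term, nonzero since $\Pi_h\bfu_m-\bfu_{h,m}$ is only discretely divergence-free, is handled through the $L^2_x$-orthogonality of $V^h_{\Div}(\mt)$ to $P^h(\mt)$ together with \eqref{eq:stabpi} and the pressure bounds of Lemmas~\ref{lemma:3.2} and \ref{lemma:3.3}, while the skew-symmetric term in \eqref{txdiscr} retains the antisymmetry needed to absorb the leading convective contribution. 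All nonlinear terms are estimated only on $[\mathfrak j_R\geq m]$, where $\|\bfu_m\|_{W^{2,2}_x}<R$, so that $W^{2,2}(\mt)\hookrightarrow L^\infty(\mt)$ yields $R$-dependent bounds collapsing, after the discrete Gronwall lemma, into the factor $e^{cR^2}$. The delicate point absent from the temporal analysis is that $\bfu_m$ carries no a priori $W^{2,2}_x$-bound; this is where Lemma~\ref{lem:sgeqtau} enters, guaranteeing that as $\tau\to0$ the iteration performs the required number of steps before blow-up with probability tending to one --- the discrete surrogate for the strict positivity of $\mathfrak t$ that legitimises the analysis up to $\mathfrak j_R$.

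Granting this bound and the temporal estimate $\E[\,\cdots\,]\leq c\,e^{cR^2}\tau^{2\alpha}$ of Theorem~\ref{thm:4t}, the passage to convergence in probability is routine. Choosing $\alpha'$ and $\beta'$ strictly between the prescribed exponents $\alpha,\beta$ and the admissible thresholds of the two ingredient estimates, the event in the claim is contained in the union of the temporal event at level $\tfrac{\xi}{2}\tau^{2\alpha}$ and the spatial event at level $\tfrac{\xi}{2}h^{2\beta}$; by Markov's inequality their probabilities are bounded by a constant multiple of $\xi^{-1}e^{cR^2}\tau^{2(\alpha'-\alpha)}$ and $\xi^{-1}e^{cR^2}h^{2(\beta'-\beta)}$ respectively. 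It then suffices to take the coupling $R=R(\tau,h)$ fixed before the theorem --- slowly growing enough that $e^{cR^2}\leq(\max\{\tau,h^2\})^{-\varepsilon}$ for some $\varepsilon$ smaller than both $2(\alpha'-\alpha)$ and $\beta'-\beta$ --- which is compatible with the admissibility conditions of Theorems~\ref{thm:maint} and \ref{thm:mainx} and forces both probabilities to vanish as $\tau,h\to0$. This yields the assertion.
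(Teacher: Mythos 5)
Your proposal follows essentially the same route as the paper: Theorem \ref{thm:maintx} is obtained there precisely by splitting $\bfu(t_m)-\bfu_{h,m}$ into the temporal error (Theorem \ref{thm:4t}/\ref{thm:maint}) and the error between the time-discrete and fully discrete iterates (Theorem \ref{thm:mainx}), and your sketch of the latter --- subtracting \eqref{txdiscr} from \eqref{eq:pressuret}, testing with $\Pi_h\bfe_{h,m}$, using \eqref{eq:stab'}, the pressure bounds, the stopping time $\mathfrak j_R$ and a discrete Gronwall argument --- matches the paper's proof. The only minor discrepancy is that the paper's Gronwall constant comes out as $e^{cR^4}$ rather than $e^{cR^2}$ (hence the coupling $R=R(\tau,h)$ with a fourth root), which does not affect the convergence-in-probability conclusion.
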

\begin{remark}
It is possible to obtain Theorem \ref{thm:maintx} by a direct comparison between the space-time discretisation and the exact solution (avoiding the time discretisation is an intermediate step) as in done \cite[Section 4]{BrPr} in the 2D case. The advantage of such an approach is that the stopping time $\mathfrak j_R$ is not needed. We believe, however, that the plainly temporal error as well as the error between the temporal and spatio-temporal discretisation given in Theorem \ref{thm:mainx} below are of independent interest.
\end{remark}
Theorem \ref{thm:maintx} follows from combining Theorem \ref{thm:maint} with the following result concerning the error between the temporal and spatio-temporal discretisation, the proof of which is the main aim of this section. Here we suppose that
$R=R(h)\leq c^{-1/2}\sqrt{-\varepsilon \log(h^2)}$ as $h\rightarrow 0$.

\begin{theorem}\label{thm:mainx}
Let $\bfu_0\in L^8(\Omega,W^{2,2}_{\Div}(\mt))$ be $\F_0$-measurable and assume that $\Phi$ satisfies \eqref{eq:phi0}--\eqref{eq:phi2b}. Let $(\bfu_{m})_{m=1}^M$ be the solution to \eqref{tdiscr}.
Then we have for any $\xi>0$, $\alpha<\frac{1}{2}$ $\beta<1$, 
\begin{align*}
&\mathbb P\bigg[\max_{1\leq m\leq \mathfrak j_{R}}\|\bfu_m-\bfu_{h,m}\|_{L^2_x}^2+\sum_{m=1}^{\mathfrak j_{R}} \tau\|\nabla\bfu_m-\nabla\bfu_{h,m}\|_{L^2_x}^2>\xi\,h^{2\beta}\bigg]\rightarrow 0
\end{align*}
as $\tau,h\rightarrow0$,
where $(\bfu_{h,m})_{m=1}^M$ is the solution to \eqref{txdiscr}.
\end{theorem}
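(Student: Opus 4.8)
The plan is to run the energy argument not for the full error $\bfe_{h,m}=\bfu_m-\bfu_{h,m}$ but for the \emph{discrete} error $\bfE_{h,m}:=\Pi_h\bfu_m-\bfu_{h,m}\in V^h_{\Div}(\mt)$, which is an admissible test function, and to recover $\bfe_{h,m}=(\bfu_m-\Pi_h\bfu_m)+\bfE_{h,m}$ at the end through the projection estimate \eqref{eq:stab'}. Since $\bfu_m$ is only weakly solenoidal while $\bfE_{h,m}$ is merely \emph{discretely} solenoidal, I would not subtract \eqref{tdiscr} directly; instead I test the pressure reformulation \eqref{eq:pressuret} with $\bfE_{h,m}$ and subtract \eqref{txdiscr} tested with the same function. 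Because $\Pi_h$ is the $L^2_x$-orthogonal projection onto $V^h_{\Div}(\mt)$, the projection error $\bfu_m-\Pi_h\bfu_m$ is orthogonal to $\bfE_{h,m}$, so the discrete time-derivative collapses to the clean identity $\int_{\mt}(\bfE_{h,m}-\bfE_{h,m-1})\cdot\bfE_{h,m}\dx=\tfrac12(\|\bfE_{h,m}\|_{L^2_x}^2-\|\bfE_{h,m-1}\|_{L^2_x}^2+\|\bfE_{h,m}-\bfE_{h,m-1}\|_{L^2_x}^2)$, and the projection error enters only the viscous, convective, pressure and noise terms.

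The standard terms are then estimated as in the proof of Theorem \ref{thm:4t}. The viscous cross term $\mu\tau\int_{\mt}\nabla(\bfu_m-\Pi_h\bfu_m):\nabla\bfE_{h,m}\dx$ and the deterministic pressure term $\tau\int_{\mt}(\pi_m^{\mathrm{det}}-\Pi_h^\pi\pi_m^{\mathrm{det}})\Div\bfE_{h,m}\dx$ (using $\Div\bfE_{h,m}\perp P^h(\mt)$) are both absorbed via \eqref{eq:stab'} and \eqref{eq:stabpi} into $\delta\tau\|\nabla\bfE_{h,m}\|_{L^2_x}^2+c(\delta)h^2\tau(\|\nabla^2\bfu_m\|_{L^2_x}^2+\|\nabla\pi_m^{\mathrm{det}}\|_{L^2_x}^2)$, whose expectation is $O(h^2e^{cR^2})$ after summation by Corollary \ref{cor:3.1} and Lemma \ref{lemma:3.2}. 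Writing the discrete convective difference in weak form, I note that the correction $(\Div\bfu_{h,m-1})\bfu_{h,m}$ in \eqref{txdiscr} is exactly what turns the discrete convective operator into the antisymmetric divergence form $-\tau\int_{\mt}\bfu_{h,m}\otimes\bfu_{h,m-1}:\nabla\bfE_{h,m}\dx$, matching \eqref{tdiscr}; the increment then splits as $\bfe_{h,m}\otimes\bfu_{m-1}+\bfu_m\otimes\bfe_{h,m-1}-\bfe_{h,m}\otimes\bfe_{h,m-1}$. On $\{m\leq\mathfrak j_R\}$ we have $\|\bfu_m\|_{L^\infty_x}\leq c\|\bfu_m\|_{W^{2,2}_x}\leq cR$, so the two linear contributions are controlled by $\delta\tau\|\nabla\bfE_{h,m}\|_{L^2_x}^2+c(\delta)R^2\tau(\|\bfe_{h,m}\|_{L^2_x}^2+\|\bfe_{h,m-1}\|_{L^2_x}^2)$ (the implicit level-$m$ term being absorbed once $R^2\tau$ is small, which holds under the prescribed scaling of $R=R(h)$ for $\tau$ small), and the $R^2\tau$-prefactor is precisely the source of the $e^{cR^2}$ factor after the discrete Gronwall lemma. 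The stochastic term is handled exactly as $\mathscr M^1_{m,1}+\mathscr M^2_{m,1}$ in Theorem \ref{thm:4t}: after the orthogonal splitting $\bfE_{h,m}=\bfE_{h,m-1}+(\bfE_{h,m}-\bfE_{h,m-1})$, Burkholder--Davis--Gundy and It\^o's isometry together with \eqref{eq:phi0} give terms of the Gronwall type, while the stochastic pressure $\Phi_{m-1}^\pi$ contributes an additional $O(h)$ via \eqref{eq:stabpi} and Lemma \ref{lemma:3.3}.

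The genuinely three-dimensional obstacle is the quadratic term $\tau\int_{\mt}\bfe_{h,m}\otimes\bfe_{h,m-1}:\nabla\bfE_{h,m}\dx$. Unlike $\bfu_m$, the discrete velocity $\bfu_{h,m}$ enjoys only the energy bound \eqref{lem:4.1} and \emph{no} $R$-dependent $L^\infty_x$ or $W^{2,2}_x$ control, so neither factor of the product may be placed in $L^\infty_x$. Estimating it by $\|\bfe_{h,m}\|_{L^6_x}\|\bfe_{h,m-1}\|_{L^3_x}\|\nabla\bfE_{h,m}\|_{L^2_x}$ and using Sobolev's embedding with \eqref{eq:stab'} bounds it, up to lower-order $h$-terms, by $\tau\|\bfe_{h,m-1}\|_{L^3_x}\|\nabla\bfE_{h,m}\|_{L^2_x}^2$, which can be absorbed into the viscous term only as long as $\|\bfe_{h,m-1}\|_{L^3_x}$ stays below a fixed multiple of $\mu$. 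This smallness is exactly what we are trying to prove. I would therefore close by a continuation/bootstrap: introduce an additional discrete stopping index $\mathfrak l$ at which $\|\bfe_{h,m}\|_{L^3_x}$ first exceeds a fixed threshold, carry out the discrete Gronwall estimate on $\{m\leq \mathfrak j_R\wedge\mathfrak l\}$ where absorption is licit, and obtain $\E[\max_{m\leq \mathfrak j_R\wedge\mathfrak l}\|\bfE_{h,m}\|_{L^2_x}^2+\sum_{m\leq\mathfrak j_R\wedge\mathfrak l}\tau\|\nabla\bfE_{h,m}\|_{L^2_x}^2]\leq ce^{cR^2}h^{2}$.

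Finally, combining this with the projection bound $\|\bfu_m-\Pi_h\bfu_m\|_{L^2_x}\leq chR$ on $\{m\leq\mathfrak j_R\}$ gives the same $O(e^{cR^2}h^{2})$ control for $\bfe_{h,m}$. A Chebyshev inequality then shows that $\|\bfe_{h,m}\|_{L^3_x}$ remains below the threshold for all $m\leq\mathfrak j_R$ with probability tending to one (here $e^{cR^2}h^{2}\to0$ under the assumed scaling of $R=R(h)$), so $\mathfrak l>\mathfrak j_R$ with high probability and the truncation by $\mathfrak l$ is removed. For $\beta<1$ and $\xi>0$, Chebyshev's inequality yields a bound of the form $\xi^{-1}ce^{cR^2}h^{2-2\beta}$ for the probability in the statement, which tends to $0$ as $\tau,h\to0$ precisely because the admissible choice of $R=R(h)$ forces $e^{cR^2}h^{2-2\beta}\to0$; this is the mechanism converting the $e^{cR^2}$-weighted $L^1(\Omega)$ bound into convergence in probability. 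The crux is thus the quadratic convective term, and the interplay of the threshold index $\mathfrak l$, the blow-up index $\mathfrak j_R$, and the $R$-scaling replaces its unavailable unconditional absorption.
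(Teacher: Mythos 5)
Your overall architecture coincides with the paper's: you test with $\Pi_h\bfe_{h,m}$ (your $\bfE_{h,m}$ is exactly $\Pi_h\bfe_{h,m}$, since $\Pi_h\bfu_{h,m}=\bfu_{h,m}$), you subtract the pressure reformulation \eqref{eq:pressuret} from \eqref{txdiscr}, and your treatment of the viscous, pressure and stochastic terms via \eqref{eq:stab'}, \eqref{eq:stabpi}, Burkholder--Davis--Gundy and the It\^{o} isometry is the paper's. The divergence occurs at the convective term, and there your argument has a genuine gap. You symmetrise around the exact iterate, producing the bare quadratic term $\tau\int_{\mt}\bfe_{h,m}\otimes\bfe_{h,m-1}:\nabla\Pi_h\bfe_{h,m}\dx$, whose natural bound $c\tau\|\nabla\bfe_{h,m}\|_{L^2_x}\|\bfe_{h,m-1}\|_{L^2_x}^{1/2}\|\nabla\bfe_{h,m-1}\|_{L^2_x}^{1/2}\|\nabla\Pi_h\bfe_{h,m}\|_{L^2_x}$ carries total gradient weight $5/2>2$, so Young's inequality cannot distribute it into $\delta$-multiples of the dissipation; you correctly identify that absorption then requires $\|\bfe_{h,m-1}\|_{L^3_x}\lesssim\mu$ and propose a continuation argument with a threshold index $\mathfrak l$. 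The unsupported step is the closure of that bootstrap: from the Gronwall output $\E\big[\max_{m}\|\Pi_h\bfe_{h,m}\|_{L^2_x}^2+\sum_m\tau\|\nabla\Pi_h\bfe_{h,m}\|_{L^2_x}^2\big]\leq ce^{cR^2}h^2$ the interpolation $\|\cdot\|_{L^3_x}\leq\|\cdot\|_{L^2_x}^{1/2}\|\cdot\|_{W^{1,2}_x}^{1/2}$ only yields a pointwise-in-$m$ bound of order $e^{cR^2}h^2\tau^{-1/2}$ for $\|\Pi_h\bfe_{h,m}\|_{L^3_x}^2$, because the gradient is controlled only in the time-summed sense; this forces an $h$--$\tau$ coupling absent from the theorem. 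The argument can be rescued by the discrete inverse inequality $\|\bfv_h\|_{L^3_x}\leq ch^{-1/2}\|\bfv_h\|_{L^2_x}$ on the quasi-uniform mesh (giving $e^{cR^2}h\rightarrow0$ for the exceedance probability), but you neither invoke it nor is it part of the paper's toolkit, and without it the step ``Chebyshev shows $\|\bfe_{h,m}\|_{L^3_x}$ stays below the threshold'' does not follow.

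The paper shows the detour is unnecessary. It decomposes the convective difference as $\bfe_{h,m}\otimes\bfu_{m-1}+\bfu_{h,m}\otimes\bfe_{h,m-1}$ rather than symmetrising, exploits the exact cancellation $\int_{\mt}(\bfu_{m-1}\cdot\nabla)\Pi_h\bfe_{h,m}\cdot\Pi_h\bfe_{h,m}\dx=0$ (as $\Div\bfu_{m-1}=0$), and arrives at the three groups $I_2^1,I_2^2,I_2^3$ in which every quadratic error product is paired either with the projection defect $\bfu_m-\Pi_h\bfu_m$ (an $O(h)$ factor by \eqref{eq:stab'}, which restores admissible Young exponents at the price of an $h^4$-weighted quartic term controlled by the higher moments in Corollary \ref{cor:3.1} and \eqref{lem:4.1}) or with $\bfu_m$ itself, bounded in $W^{1,2}_x$ by $R$ up to $\mathfrak j_R$; the $(2,4,4)$ Young inequality then closes \emph{unconditionally}, leaving a deterministic Gronwall coefficient $R^4$ and the constant $e^{cR^4}$. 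In short: your proposal is sound except at the one point that is genuinely three-dimensional, and there it replaces the paper's algebraic regrouping by a conditional absorption whose removal is asserted but not proved.
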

\begin{proof}
Define the error $\bfe_{h,m}=\bfu_m-\bfu_{h,m}$ for $m\in\{0,1,\dots,\mathfrak j_R\}$. Subtracting \eqref{eq:pressuret} and \eqref{txdiscr} we obtain
\begin{align*}
\begin{aligned}
\int_{\mt}&\bfe_{h,m}\cdot\bfvarphi \dx +\mu\tau\int_{\mt}\Big(\nabla\bfu_m-\nabla\bfu_{h,m}\Big):\nabla\bfphi\dx\\&=\int_{\mt}\bfe_{h,m-1}\cdot\bfvarphi \dx-\tau\int_{\mt}\Big((\nabla\bfu_m)\bfu_{m-1}-\big((\nabla\bfu_{h,m})\bfu_{h,m-1}+(\Div\bfu_{h,m-1})\bfu_{h,m}\big)\Big)\cdot\bfphi\dx\\
&+\int_{\mt}\big(\Phi(\bfu_m)-\Phi(\bfu_{h,m-1})\big)\,\Delta_mW\cdot \bfvarphi\dx\\
&-\int_{\mt}\nabla\Delta^{-1}\Div\Phi(\bfu_{m-1})\,\Delta_mW\cdot \bfvarphi\dx+\tau\int_{\mt}\pi_m^{\mathrm{det}}\,\Div\bfphi\dx
\end{aligned}
\end{align*}
for every $\bfphi\in V_{\Div}^h(\mt)$.
Setting $\bfphi=\Pi_h\bfe_{h,m}$ and applying again the identity $\bfa\cdot(\bfa-\bfb)=\frac{1}{2}\big(|\bfa|^2-|\bfb|^2+|\bfa-\bfb|^2\big)$ for $\bfa,\bfb\in\mathbb R^3$ we gain
\begin{align}\label{eq:0207}
\begin{aligned}
\int_{\mt}&\frac{1}{2}\big(|\Pi_h\bfe_{h,m}|^2-|\Pi_h\bfe_{h,m-1}|^2+|\Pi_h\bfe_{h,m}-\Pi_h\bfe_{h,m-1}|^2\big) \dx+\mu\tau\int_{\mt}|\nabla\bfe_{h,m}|^2\dx\\
&=\mu\tau\int_{\mt}\nabla\bfe_{h,m}:\nabla\big(\bfu_{m}-\Pi_h\bfu_{m}\big)\dx\\
&-\tau\int_{\mt}\Big((\nabla\bfu_m)\bfu_{m-1}-\big((\nabla\bfu_{h,m})\bfu_{h,m-1}+(\Div\bfu_{h,m-1})\bfu_{h,m}\big)\Big)\cdot\Pi_h\bfe_{h,m}\dx\\
&+\tau\int_{\mt}\pi_m^{\mathrm{det}}\,\Div\Pi_h\bfe_{h,m}\dx\\
&+\int_{\mt}\big(\Phi(\bfu_m)-\Phi(\bfu_{h,m-1})\big)\,\Delta_mW\cdot \Pi_h\bfe_{h,m}\dx\\
&-\int_{\mt}\nabla\Delta^{-1}\Div\Phi(\bfu_{m-1})\,\Delta_mW\cdot \Pi_h\bfe_{h,m}\dx\\
&=I_1(m)+\dots +I_5(m).
\end{aligned}
\end{align}
Now we take the maximum with respect to $m\in\{1,\dots,\mathfrak j_R\}$. The terms $I_1(m)$ and $I_3(m)$ can be estimated as in \cite[Section 4]{BrDo} and \cite[Section 4]{BrPr} leading to
\begin{align*}
I_1(m)
&\leq \,\delta\,\tau\int_{\mt}|\nabla\bfe_{h,m}|^2\dx+c(\delta)\,\tau h^2 \int_{\mt}|\nabla^2 \bfu_m|^2\dx,\\
I_3(m)
&\leq \,\delta\tau\,\int_{\mt}|\nabla\bfe_{h,m}|^2\dx+c(\delta)\tau h^2\|\bfu_m\|_{W^{2,2}_x}^2+c(\delta)\tau h^2\,\int_{\mt}|\nabla\pi_m^{\mathrm{det}}|^2\dx,
\end{align*}
for $ \delta>0$ arbitrary.
The $\delta$-terms can be absorbed, while the (sum from $m=1,\dots,\mathfrak j_R$ of the) expectations of the other two terms can be bounded by $h^2 e^{cR^2}$ using Corollary \ref{cor:3.1} (a) and Lemma \ref{lemma:3.2}. More care is required for $I_2(m)$. We argue in the spirit of of \cite[bounds for $I_4(m)$ in the proof of Thm. 4.2]{BrPr} but working with 3D embeddings and the definition of $\mathfrak j_R$.
First we write
\begin{align*}
I_2(m)&=I_2^1(m)+I_2^2(m)+I_2^3(m),\\
I_2^1(m)&=-\tau\int_{\mt}(\bfu_{m-1}\cdot\nabla)\bfe_{h,m}\cdot\big(\bfu_m-\Pi_h\bfu_m\big)\dx,\\
I_2^2(m)&=\tau\int_{\mt}(\bfe_{h,m-1}\cdot\nabla)\bfe_{h,m}\cdot\big(\bfu_m-\Pi_h\bfu_m\big)\dx\\
&+\tau\int_{\mt}(\Div\bfe_{h,m-1})\bfe_{h,m}\cdot\big(\bfu_m-\Pi_h\bfu_m\big)\dx,\\
I_2^3(m)&=-\tau\int_{\mt}(\bfe_{h,m-1}\cdot\nabla)\Pi_h\bfe_{h,m}\cdot\bfu_m\dx\\
&-\tau\int_{\mt}(\Div\bfe_{h,m-1})\Pi_h\bfe_{h,m}\cdot\bfu_m\dx.
\end{align*}
We obtain for any $\delta>0$ and $m\leq\mathfrak j_R$
\begin{align*}
I_2^1(m)&\leq\,\tau\|\nabla\bfe_{h,m}\|_{L^2_x}\|\bfu_{m-1}\|_{L^\infty_x}\|\bfu_m-\Pi_h\bfu_m\|_{L^2_x}\\
&\leq\,\tau Rh^2\|\nabla\bfe_{h,m}\|_{L^2_x}\|\nabla^2\bfu_m\|_{L^2_x}\\
&\leq \,\delta\tau\|\nabla\bfe_{h,m}\|^2_{L^2_x}+c(\delta) h^4 R^2\tau\|\nabla^2\bfu_m\|_{L^2_x}^2
\end{align*}
by the embedding $W^{2,2}(\mt)\hookrightarrow L^\infty(\mt)$ and the approximability of $\Pi_h$ from \eqref{eq:stab'}. The first term can be absorbed for $\kappa$ small enough, whereas the second one (in summed form and expectation) is bounded by $h^4e^{cR^2}$ due Corollary \ref{cor:3.1} (a). Similarly, we have
\begin{align*}
I_2^2(m)&\leq \tau\|\nabla\bfe_{h,m}\|_{L^2_x}\|\bfe_{h,m-1}\|_{L^3_x}\|\bfu_m-\Pi_h\bfu_m\|_{L^6_x}\\
&+\tau\|\nabla\bfe_{h,m-1}\|_{L^2_x}\|\bfe_{h,m}\|_{L^3_x}\|\bfu_m-\Pi_h\bfu_m\|_{L^6_x}\\
&\leq \tau\|\nabla\bfe_{h,m}\|_{L^2_x}\|\bfe_{h,m-1}\|_{L^2_x}^{\frac{1}{2}}\|\nabla\bfe_{h,m-1}\|^{\frac{1}{2}}_{L^2_x}\|\bfu_m-\Pi_h\bfu_m\|_{W^{1,2}_x}\\
&+\tau\|\nabla\bfe_{h,m-1}\|_{L^2_x}\|\bfe_{h,m}\|_{L^2_x}^{\frac{1}{2}}\|\nabla\bfe_{h,m}\|_{L^2_x}^{\frac{1}{2}}\|\bfu_m-\Pi_h\bfu_m\|_{W^{1,2}_x}\\
&\leq\,\delta\tau\Big(\|\nabla\bfe_{h,m-1}\|^2_{L^2_x}+\|\nabla\bfe_{h,m}\|^2_{L^2_x}\Big)\\
&+c(\delta)\,\tau \,h^{4}\Big(\max_{1\leq n\leq m}\|\bfe_{h,n}\|_{L^2_x}^2\Big)\|\nabla^2\bfu_m\|^4_{L^2_x}.
\end{align*}
The last term (in summed form, for $m=1,\dots,\mathfrak j_R,$ and expectation) can be controlled by Lemma \ref{cor:3.1} (c) (with $q=3$) and \ref{lem:4.1} (with $q=2$).
Finally, by definition of $\mathfrak j_R$,
\begin{align*}
I_2^3(m)&\leq \tau\|\nabla\Pi_h\bfe_{h,m}\|_{L^2_x}\|\bfe_{h,m-1}\|_{L^3_x}\|\bfu_m\|_{L^6_x}\\
&+ \tau\|\nabla\bfe_{h,m-1}\|_{L^2_x}\|\Pi_h\bfe_{h,m}\|_{L^3_x}\|\bfu_m\|_{L^6_x}\\
&\leq \tau\|\nabla\bfe_{h,m}\|_{L^2_x}\|\bfe_{h,m-1}\|_{L^2_x}^{\frac{1}{2}}\|\nabla\bfe_{h,m-1}\|^{\frac{1}{2}}_{L^2_x}\|\bfu_m\|_{W^{1,2}_x}\\
&+ \tau\|\nabla\bfe_{h,m-1}\|_{L^2_x}\|\Pi_h\bfe_{h,m}\|_{L^2_x}^{\frac{1}{2}}\|\nabla\Pi_h\bfe_{h,m}\|^{\frac{1}{2}}_{L^2_x}\|\bfu_m\|_{W^{1,2}_x}\\
&\leq\,\delta\Big(\|\nabla\bfe_{h,m-1}\|^2_{L^2_x}+\|\nabla\bfe_{h,m}\|^2_{L^2_x}\Big)+c(\delta)\,\tau\,R^4\Big(\max_{1\leq n\leq m}\|\bfe_{h,n}\|^2_{L^2_x}\Big)\\
&+c(\delta)\,\tau\|\nabla(\bfu_m-\bfu_{m-1})\|^2_{L^2_x}\Big(\max_{1\leq n\leq m}\|\nabla\bfu_{n}\|_{L^2_x}^2\Big)\Big(\max_{0\leq n\leq m}\|\bfe_{h,n}\|_{L^2_x}^2\Big)\\
&+ c(\delta)\,\tau\|\nabla(\bfu_m)-\Pi_h\bfu_m)\|^2_{L^{2}_x}+c(\delta)\,\tau R^4\|\bfu_m-\Pi_h\bfu_m\|^2_{L^2_x}.
\end{align*}
for any $m\leq \mathfrak j_R$.
The last term in the second line will be dealt with by Gronwall's lemma leading to a constant of the form $c e^{cR^4}$. The last term in the second line (in summed form, for $m=1,\dots,\mathfrak j_R,$ and expectation) can be controlled by \eqref{eq:1007}, Lemma \ref{cor:3.1} (c) and \ref{lem:4.1} (each of them with with $q=3$).
The final line is bounded by 
$c(\delta)\,\tau R^4h^{2}\|\bfu_m\|^2_{W^{2,2}_x}$ using \eqref{eq:stab'} and hence can be controlled by Lemma \ref{lem:reg} (c).

In order to estimate the stochastic term $I_4(m)$ we write
\begin{align*}
\mathscr N_{m,1}
:=\sum_{n=1}^m I_4(m)
&=\sum_{n=1}^m \int_{t_{n-1}}^{t_{n}}\int_{\mt}\big(\Phi(\bfu_{n-1})-\Phi(\bfu_{h,n-1})\big)\,\dd W\cdot \bfe_{h,n-1}\dx\\
&+ \sum_{n=1}^m\int_{\mt}\int_{t_{n-1}}^{t_n}\big(\Phi(\bfu_{n-1})-\Phi(\bfu_{h,n-1})\big)\,\dd W\cdot (\bfe_{h,n}-\bfe_{h,n-1})\dx\\
&=:\mathscr N_{m,1}^1+\mathscr N_{m,1}^2.
\end{align*}
Using that $\mathfrak j_R$ is an $(\mathfrak F_{t_m})$-stopping time, we can argue as in \cite[Section 4]{BrDo} and \cite[Section 4]{BrPr} obtaining
\begin{align*}
&\E\bigg[\max_{1\leq m\leq M}\big|\mathscr N_{m\wedge \mathfrak j_R,1}^1\big|\bigg]\\
&\leq\,\delta\,\E\bigg[\max_{0\leq m\leq \mathfrak j_R}\|\Pi_h\bfe_{h,m}\|^2_{L^2_x}\bigg]+\,c(\delta)\,\E\bigg[\tau\sum_{m=1}^{\mathfrak j_R}\|\Pi_h\bfe_{h,m-1}\|_{L^2_x}^2\bigg]\\
&+c(\delta)h^2\,\E\bigg[\tau\sum_{n=1}^{\mathfrak j_R}\|\nabla\bfu_{m-1}\|_{L^2_x}^2\bigg]
\end{align*}
as well as
\begin{align*}
&\E\bigg[\max_{1\leq m\leq M}|\mathscr N_{m\wedge \mathfrak j_R,1}^2|\bigg]\\
&\leq \,\delta\,\E\bigg[ \sum_{m=1}^{\mathfrak j_R} \|\Pi_h\bfe_{h,m}-\Pi_h\bfe_{h,m-1}\|_{L^2_x}^2 \bigg]+ +\,c(\delta)\,\E\bigg[\tau\sum_{m=1}^{\mathfrak j_R}\|\Pi_h\bfe_{h,m-1}\|_{L^2_x}^2\bigg]\\
&+c(\delta)h^2\,\E\bigg[\tau\sum_{n=1}^{\mathfrak j_R}\|\nabla\bfu_{m-1}\|_{L^2_x}^2\bigg].
\end{align*}
In both estimates, the first term can be absorbed, the second one can be handled by Gronwall's lemma, and the last one is bounded by $h^2$ using Lemma \ref{lemma:3.1a}.

In order to estimate $I_5(m)$ we write
\begin{align*}
\mathscr N_{m,2}&:=\sum_{n=1}^mI_5(n)=
\sum_{n=1}^m\int_{\mt}\int_{t_{n-1}}^{t_n}
\big(\mathrm{Id}-\Pi_h^\pi\big)\Delta^{-1}\Div\Phi(\bfu_{n-1})\,\dd W\,\Div \Pi_h\bfe_{h,n-1}\dx\\
&+\sum_{n=1}^m\int_{\mt}\int_{t_{n-1}}^{t_n}
\big(\mathrm{Id}-\Pi_h^\pi\big)\Delta^{-1}\Div\Phi(\bfu_{n-1})\,\dd W\,\Div (\Pi_h\bfe_{h,n}-\Pi_h\bfe_{h,n-1})\dx\\
&=:\mathscr N_{m,2}^1+\mathscr N_{m,2}^2.
\end{align*}
Following again \cite{BrDo} we have
\begin{align*}
&\E\bigg[\max_{1\leq m\leq M}\big|\mathscr M_{m\wedge \mathfrak j_R,2}^1\big|\bigg]\\
&\leq\,c(\delta) h^4\,\E\bigg[\max_{1\leq n\leq \mathfrak j_R}\|\nabla\bfu_{n-1}\|^2_{L^2_x}\bigg]+\,\delta\,\E\bigg[\tau\sum_{n=1}^{\mathfrak j_R}\|\nabla\Pi_h\bfe_{h,n}\|_{L^{2}_x}^2\bigg].
\end{align*}
The first term is bounded by
$h^4e^{cR^2}$ using Corollary \ref{cor:3.1} (a) (recall that $\bfu_0\in L^2(\Omega;W^{1,2}(\mt))$). The second term
can be absorbed given the appropriate choice of $\delta$. Finally,
\begin{align*}
\E\bigg[\max_{1\leq m\leq M}|\mathscr N_{m\wedge \mathfrak j_R,2}^2|\bigg]
&\leq  \,\delta \,\E\bigg[ \sum_{n=1}^{\mathfrak j_R} \big\|\Pi_h\bfe_{h,n}-\Pi_h\bfe_{h,n-1}\big\|_{L^{2}_x}^2\bigg]\\ &+c(\delta) h^{2}\,\E\bigg[\tau\sum_{n=1}^{\mathfrak j_R}\big(1+\big\| \bfu_{n-1}\|_{L^2_x}^2\big)\dt\bigg],
\end{align*}
where the first term can be absorbed and the second one is bounded by $h^2$ on account of Lemma \ref{lemma:3.1a}.
We conclude that
\begin{align*}
&\mathbb E\bigg[\max_{1\leq m\leq \mathfrak j_R}\|\bfu_m-\bfu_{h,m}\|_{L^2_x}^2+\sum_{m=1}^{\mathfrak j_R} \tau\|\nabla\bfu_m-\nabla\bfu_{h,m}\|_{L^2_x}^2\bigg]\leq\,ch^2e^{cR^2}.
\end{align*}
Since $R=R(h)\leq c^{-1/2}\sqrt{-\varepsilon \log(h^2)}$, where $\varepsilon>0$ is arbitrary, the claim follows now by applying Markov's inequality.
\end{proof}

\end{document}